\newcommand{\grassman}{\mathbf G}
\let\cal\mathcal
\renewcommand{\setminus}{\smallsetminus}
\newcommand\R{{\mathbb R}}
\newcommand\C{{\mathbb C}}
\newcommand\Z{{\mathbb Z}}
\newcommand\N{{\mathbb N}}
\newtheorem{theorem}{Theorem}[section]
\newtheorem{proposition}[theorem]{Proposition}
\newtheorem{lemma}[theorem]{Lemma}
\renewcommand{\int}{\mathop{\rm int}}
\newcommand{\norm}[1]{\lVert #1 \rVert}
\title[Characterization of Lipschitz normal embedding]{A characterization of Lipschitz normally embedded surface singularities}
\author{Walter D Neumann, Helge M\o ller Pedersen, Anne Pichon}
\begin{document}
\maketitle

\begin{abstract} 
  Any germ of a complex analytic space  is  equipped with two natural metrics: the {\it outer metric}  induced by the hermitian metric of the ambient space and the {\it inner metric}, which is the associated riemannian metric on the germ. These two metrics  are in general nonequivalent up to bilipschitz homeomorphism.  We give   a necessary and sufficient condition for a normal  surface singularity to be Lipschitz normally embedded (LNE), i.e., to have bilipschitz equivalent  outer and inner metrics.
  In a partner paper \cite{NPP2} we apply it to prove that rational surface singularities are LNE if and only if they are minimal. 
\end{abstract}

\section{Introduction}

If $(X,0)$ is a germ of a
complex variety, then any embedding $\phi\colon(X,0)\hookrightarrow
(\C^n,0)$ determines two metrics on $(X,0)$: the outer metric
$$d_o({x,y}):=\norm{\phi(x)-\phi(y)}\quad\text{
    (i.e., distance in $\C^n$)}$$ and the inner metric
$$d_i(x,y):=\inf\{\mathop{\rm length}(\phi\circ\gamma):
\gamma\text{ is a rectifyable path in }X\text{ from }x\text{ to
}y\}\, ,$$
using the riemannian metric on $X \setminus \{0\}$ induced by the hermitian metric on $\C^n$. For all $x,y \in X, \ d_o(x,y) \leq d_i(x,y)$, and the outer metric determines the inner metric. Up to
bilipschitz local homeomorphism both these metrics are independent of the
choice of complex embedding.  We speak of the (inner or outer)
\emph{Lipschitz geometry} of $(X,0)$ when considering these metrics up
to bilipschitz equivalence.

\begin{definition} A germ of a complex normal variety $(X,0)$ is \emph{Lipschitz normally embedded} (LNE) if  the identity map of $(X,0)$ is a bilipschitz homeomorphism between inner and outer metrics,  i.e.,  there exists a neighborhood $U$ of $0$ in $X$ and a constant $K\geq 1$ such that for all $x,y \in U$
$$\frac{1}{K} d_i(x,y) \leq d_o(x,y).$$
\end{definition}
 This definition was  first introduced by Birbrair and Mostowski in
  \cite{BM}, where they just call it {\it normally embedded}. We  prefer adding   the word {\it Lipschitz} to distinguish this notion
  from that of projective normal embedding (in algebraic
  geometry) and normality (in local geometry, commutative algebra and
  singularity theory).

 Lipschitz Normal Embedding (LNE) is a very active research area with many
 recent results giving necessary conditions for LNE in the real
 and complex setting, e.g., by Birbrair, Fernandes, Kerner,
 Mendes, Nun\~o-Ballesteros,  Pedersen, Ruas, Sampaio
  (\cite{BMN}, \cite{FS}, \cite{KMR}, \cite{MR})  including a characterization of LNE  for semialgebraic sets (\cite{BM1}). In this paper we focus on complex normal surfaces.

It is a classical fact that the topology of a germ of a complex variety $(X,0)\subset(\C^n,0)$ is locally homeomorphic to the cone over its link $X^{(\epsilon)}=\mathbb S^{2n-1}_{\epsilon} \cap X$, where $\mathbb S^{2n-1}_{\epsilon}$ denotes the sphere with small radius $\epsilon$ centered at the origin in $\C^n$. 
If $(X,0)$ is a curve germ  then  it  is in fact bilipschitz equivalent to the metric cone over its link with respect to the inner metric, while the data of  its Lipschitz outer geometry is equivalent to that of the embedded topology of a generic plane projection (see \cite{PT},  \cite{NP1}).  Therefore,  an irreducible complex curve is LNE if and only if it is smooth. This is not true in higher dimension.

The main result of this paper, Theorem \ref{cor:complex characterization of normal embedding}, is a characterization of LNE for normal surface germs based on what we call the {\it nodal test curve criterion}.  
        The first ingredient of the proof  is the {\it arc criterion} for LNE  (Theorem \ref{thm:arc criterion})  of Birbrair and Mendes \cite{BM1} which says that one 
  can check if a semialgebraic germ $(X,0)$ is LNE by testing LNE on each
  pair of real analytic arc $\delta_1,\delta_2\in (X,0)$. This criterion is difficult to use effectively since the amount of pairs of curves is incommensurable.  Our Theorem 
  \ref{cor:complex characterization of normal embedding} uses  \emph{generic
    projections} $\ell \colon (X,0) \to (\C^2,0)$ which enable one to  reduce drastically  the amount of types of pairs of 
  arcs to be tested, namely just certain pairs of arcs in $\ell^{-1}(\delta)$ for
  certain arcs $\delta \subset (\C^2,0)$ called {\it test arcs}. This makes
the criterion more efficient to prove LNE.  For example, we use it in \cite{NPP2} to prove that rational surface
  singularities are LNE if and only if they are minimal.
  
  The second ingredient of the proof is the geometric decomposition of a normal surface germ which was introduced in \cite{BNP} and which is presented in Section  \ref{sec:geometric decomposition}.
  
  The proof  of Theorem \ref{cor:complex characterization of normal embedding} has two keystones:   Proposition \ref{prop:characterization of normal embedding2} and  its enhancement Proposition \ref{cor:characterization of normal embedding}. They  consist of two successive reductions of the amount of test arcs to be tested. They are stated in terms of real test arcs which are real slices of complex curves on $(X,0)$.  Then the final part of the proof (Section \ref{sec:proof main}) consists of a reinterpretation of Proposition \ref{cor:characterization of normal embedding} in terms of complex test curves. 

\vskip.1cm\noindent{\bf Acknowledgments.}  
Neumann was supported by NSF grant DMS-1608600.  Pedersen was supported
by FAPESP grant 2015/08026-4. Pichon was supported by the ANR project
LISA  17-CE40--0023-01 and by USP-Cofecub UcMa163-17. We are  very grateful for the hospitality and
support of the following institutions: Columbia University, Institut
de Math\'ematiques de Marseille, FRUMAM Marseille, Aix Marseille
Universit\'e, ICMC-USP and IAS Princeton.

\section{Generic projections and Nash modification}

In order to state the main Theorem  \ref{cor:complex characterization of normal embedding}  in Section \ref{statement}, we need to introduce the notions of generic projections of a curve and of a surface, and of Nash modification. 
  
Let $\cal D$ be a $(n-2)$-plane in $\C^n$ and let $\ell_{\cal D}
\colon \C^n \to \C^2$ be the linear projection with
kernel $\cal D$. Suppose $(C,0)\subset (\C^n,0)$ is a complex curve germ.  There exists an open
dense subset $\Omega_C$ of the Grassmanian $\grassman(n-2,\C^n)$  such that for $\cal D \in \Omega_C$, $\cal D$ contains no limit of secant lines to the curve $C$ (\cite{teissier}). 
\begin{definition} \label{def:generic projection curve}  The
  projection $\ell_{\cal D}$ is  said  to be \emph{generic for $C$} if  $\cal D \in \Omega_C$. 
\end{definition}
 In the sequel, we will use extensively the following result 
\begin{theorem}[{\cite[pp. 352-354]{teissier}}]  \label{generic projection bilipschitz}If $\ell_{\cal D}$ is a generic projection for $C$, then the restriction $\ell_{\cal D}|_{C} \colon C \to \ell_{\cal D}(C)$ is a bilipschitz homeomorphism for the outer metric.
\end{theorem}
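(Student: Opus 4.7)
The plan is to prove the two Lipschitz inequalities separately. The easy direction is immediate: since $\ell_{\cal D}$ is a linear map $\C^n\to\C^2$, it is globally Lipschitz with some constant $K_1$ depending only on its operator norm, giving $\norm{\ell_{\cal D}(x)-\ell_{\cal D}(y)}\le K_1\norm{x-y}$ for all $x,y\in C$. The real content of the theorem lies in producing a constant $K_2>0$ with $\norm{x-y}\le K_2\norm{\ell_{\cal D}(x)-\ell_{\cal D}(y)}$ on a neighborhood of $0$ in $C$.

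I would proceed by contradiction. If no such constant existed, there would be sequences of pairwise distinct points $(x_n),(y_n)$ in a small representative of $C$ with $\norm{\ell_{\cal D}(x_n)-\ell_{\cal D}(y_n)}/\norm{x_n-y_n}\to 0$. After passing to a subsequence, the first step is to argue that $x_n,y_n\to 0$: away from $0$ the curve $C$ is smooth and a generic linear projection is locally biholomorphic at each such point, so a compactness argument on any shell $\{r_1\le\norm{z}\le r_2\}\cap C$ makes $\ell_{\cal D}|_C$ uniformly bilipschitz there, ruling out a blow-up of the ratio away from the origin. Next I would extract a limit secant direction: normalising $v_n:=(x_n-y_n)/\norm{x_n-y_n}\in\mathbb S^{2n-1}$ and using compactness of the sphere, I obtain $v_n\to v$ along a subsequence, and by definition the complex line $\C v$ is then a limit of secant lines to $C$ at $0$. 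The genericity hypothesis $\cal D\in\Omega_C$ therefore forces $\C v\not\subset\cal D$, i.e., $\ell_{\cal D}(v)\neq 0$. But linearity of the projection gives
\[ \ell_{\cal D}(v_n)=\frac{\ell_{\cal D}(x_n)-\ell_{\cal D}(y_n)}{\norm{x_n-y_n}}\longrightarrow 0, \]
so $\ell_{\cal D}(v)=0$, a contradiction.

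The homeomorphism statement is extracted along the way: bilipschitz equivalence already forces injectivity of $\ell_{\cal D}|_C$ in a punctured neighborhood of $0$, and surjectivity onto $\ell_{\cal D}(C)$ is by definition, with continuity of the inverse a consequence of the lower Lipschitz bound. I expect the main obstacle to be the reduction step ensuring that both sequences may be assumed to converge to $0$; one also has to exclude accumulating pairs $x_n\neq y_n$ with $\ell_{\cal D}(x_n)=\ell_{\cal D}(y_n)$ tending to $0$, but this is a direct application of the same secant-limit argument (the secant direction would lie in $\cal D$, violating genericity). Once this reduction is in hand, the secant-compactness argument of the previous paragraph does the rest.
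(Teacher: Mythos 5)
The paper does not prove this statement; it cites Teissier's \emph{Vari\'et\'es polaires II} directly. Your secant-limit argument is the standard proof of such results and is correct in substance: from a failing ratio $\norm{\ell_{\cal D}(x_n)-\ell_{\cal D}(y_n)}/\norm{x_n-y_n}\to 0$ one extracts a limit direction $v$ of normalized secants, linearity of $\ell_{\cal D}$ forces $\ell_{\cal D}(v)=0$, and genericity of $\cal D$ forbids the limit secant line $\C v$ from lying in $\cal D=\ker\ell_{\cal D}$, which is the contradiction. The upper Lipschitz bound and the homeomorphism statement follow exactly as you say.

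The one loose point is the reduction to $x_n,y_n\to 0$, which as written invokes local biholomorphy at nearby smooth points; but that property itself depends on $\cal D$ avoiding tangent lines at those points, i.e., more of the same secant-limit fact, so a shell-by-shell compactness argument somewhat begs the question. It is cleaner to absorb everything into one step: the set $S$ of limit secant lines of $C$ at $0$ is compact in the Grassmannian of lines, genericity says no line of $S$ lies in $\cal D$, so the continuous function $L\mapsto\inf\{\norm{\ell_{\cal D}(w)}\colon w\in L,\ \norm{w}=1\}$ has a positive minimum $c$ on $S$. Since $S=\bigcap_{\epsilon>0}\overline{\{\C(x-y):x,y\in C\cap B_\epsilon,\ x\neq y\}}$, for $\epsilon$ small enough every normalized secant $v=(x-y)/\norm{x-y}$ with $x,y\in C\cap B_\epsilon$ satisfies $\norm{\ell_{\cal D}(v)}\ge c/2$. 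This yields the lower Lipschitz bound on $C\cap B_\epsilon$ uniformly, subsumes your coincidence case $\ell_{\cal D}(x_n)=\ell_{\cal D}(y_n)$, and dispenses with the separate treatment of points away from the origin.
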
 

 Let $(X,0)\subset (\C^n,0)$ be a normal surface singularity. We restrict ourselves to those $\cal D$ in $\grassman(n-2,\C^n)$ such that the restriction
$\ell_{\cal D}{\mid_{(X,0)}}\colon(X,0)\to(\C^2,0)$ is finite.
The \emph{polar curve}
$\Pi_{\cal D}$ of $(X,0)$ for the direction $\cal D$ is the closure in
$(X,0)$ of the singular locus of the restriction of $\ell_{\cal D} $
to $X \setminus \{0\}$. The \emph{discriminant curve} $\Delta_{\cal D}
\subset (\C^2,0)$ is the image $\ell_{\cal D}(\Pi_{\cal D})$ of the polar
curve $\Pi_{\cal D}$.

\begin{proposition}[{\cite[Lemme-cl\'e V 1.2.2]{teissier}}]\label{prop:generic} An open
dense subset $\Omega \subset \grassman(n-2,\C^n)$ exists 
such that: 
\begin{enumerate}
\item \label{cond:generic1} the family of  curve germs  $(\Pi_{\cal D})_{\cal D \in \Omega}$
is equisingular in terms of strong simultaneous resolution;
\item \label{cond:generic2} the discriminant curves  $ \Delta_{\cal D}=\ell_{\cal D}(\Pi_{\cal D})$, ${\cal D \in \Omega}$, form an equisingular family of reduced plane curves;
\item \label{cond:generic3}  for each $\cal D$, the projection $\ell_{\cal D}$ is generic for its polar curve $\Pi_{\cal D}$. 
\end{enumerate}
  \end{proposition}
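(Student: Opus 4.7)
The plan is to follow Teissier's strategy and build the open set $\Omega$ by successive refinement, at each step cutting out the locus where some genericity fails.

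First I would introduce the universal polar curve: inside $\grassman(n-2,\C^n)\times X$ consider the incidence variety
$$\widetilde\Pi=\bigl\{(\cal D,x):\ell_{\cal D}|_{X\setminus\{0\}}\text{ is not submersive at }x\bigr\}^{\overline{\phantom{x}}}$$
together with its projection $p$ to the Grassmannian. A standard dimension count shows that off a proper Zariski closed set the restriction $\ell_{\cal D}|_X$ is finite of generic degree, and the critical locus has the expected dimension one; this gives a first open dense $\Omega_1\subset\grassman(n-2,\C^n)$ above which $\Pi_{\cal D}$ is a genuine (possibly non-reduced) curve. A Bertini-type argument, using that for generic $\cal D$ the map $\ell_{\cal D}$ is a submersion at smooth points of $\Pi_{\cal D}$, then forces reducedness on a further open dense $\Omega_2\subset\Omega_1$.

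Next I would invoke the semicontinuity of Teissier's polar multiplicities (the $\mu^*$-sequence) in the analytic family $p\colon\widetilde\Pi|_{\Omega_2}\to\Omega_2$. These invariants are constructible upper-semicontinuous functions on the parameter space, so they are constant on a dense Zariski open $\Omega_3\subset\Omega_2$. By Teissier's principal theorem on equisingularity of plane (and space) curve families, constancy of the full $\mu^*$-sequence is equivalent to the existence of a strong simultaneous resolution, yielding condition (\ref{cond:generic1}). Applied in parallel to the image family $\{\Delta_{\cal D}=\ell_{\cal D}(\Pi_{\cal D})\}_{\cal D\in\Omega_3}$, where the degree of $\ell_{\cal D}|_{\Pi_{\cal D}}$ is also constructible in $\cal D$, the same argument (after a further thinning to $\Omega_4$) produces condition (\ref{cond:generic2}).

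For condition (\ref{cond:generic3}), I would use Definition \ref{def:generic projection curve}: the set of directions generic for a \emph{fixed} curve is open dense in the Grassmannian. Because the family $\{\Pi_{\cal D}\}_{\cal D\in\Omega_4}$ is equisingular, the set of limit secants of $\Pi_{\cal D}$ varies continuously with $\cal D$, and the condition that $\cal D$ avoids the limit secants of its own polar curve $\Pi_{\cal D}$ is an open condition on $\Omega_4$. A direct check, e.g.\ at a single $\cal D_0\in\Omega_4$ combined with the local triviality given by simultaneous resolution, shows that this condition is nonempty. Intersecting with $\Omega_4$ produces the desired open dense $\Omega$.

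The main obstacle is the equisingularity step: one must verify that the numerical invariants one controls (polar multiplicities of $\Pi_{\cal D}$ at $0$, and those of $\Delta_{\cal D}$) are indeed sufficient to rule out hidden degenerations, in particular that no extra components of the polar curve appear or disappear along special directions in $\Omega_2$, and that the bilipschitz genericity of $\ell_{\cal D}|_{\Pi_{\cal D}}$ on $\Omega_4$ prevents the discriminant family from acquiring new branches. This is precisely the content of Teissier's \emph{lemme-cl\'e} and rests on his full machinery of integral dependence, Whitney conditions, and $\mu^*$-constancy.
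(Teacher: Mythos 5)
The paper itself does not prove this proposition: it is cited verbatim from Teissier \cite[Lemme-cl\'e V 1.2.2]{teissier}, and Remark \ref{rk:generic} only explains that conditions \ref{cond:generic1} and \ref{cond:generic3} are explicit in Teissier while \ref{cond:generic2} appears implicitly there as a pre-condition used to shrink $\Omega$ in order to obtain \ref{cond:generic3}. So your sketch is in effect a reconstruction of Teissier's argument, and two of its steps do not hold up.

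First, you invoke constancy of the $\mu^*$-sequence to deduce strong simultaneous resolution of the family $(\Pi_{\cal D})$. But $\Pi_{\cal D}$ is a space curve germ in $X\subset\C^n$, not a hypersurface, and the $\mu^*$-sequence is a hypersurface invariant; it is not defined in the form you need here. Strong simultaneous resolution for a family of reduced curve germs is characterized instead by constancy of the multiplicities at all infinitely near points along the resolution tree (equivalently, simultaneous normalizability with constant $\delta$-invariant), and Teissier controls this through integral dependence on the relative polar and Whitney conditions rather than through a $\mu^*$-type semicontinuity for $\Pi_{\cal D}$. Your $\mu^*$ argument is appropriate only for condition \ref{cond:generic2}, where $\Delta_{\cal D}\subset\C^2$ is genuinely a plane curve.

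Second, your treatment of condition \ref{cond:generic3} skips the actual difficulty. You correctly identify it as a diagonal condition: $\cal D$ must avoid the limit secants of $\Pi_{\cal D}$, which itself varies with $\cal D$. You then claim openness by continuity of the limit-secant variety and nonemptiness by a ``direct check at a single $\cal D_0$.'' But verifying that some particular $\cal D_0$ does not contain a limit secant of its own polar curve $\Pi_{\cal D_0}$ is precisely the assertion to be proved, not something one can check independently, and the locus of $\cal D$ which do contain a limit secant of $\Pi_{\cal D}$ could a priori exhaust the Grassmannian. Showing it is a proper subvariety is the real content of Teissier's lemme-cl\'e, and it hinges essentially on condition \ref{cond:generic2} as a working hypothesis, exactly as the paper's remark flags. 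Your outline thins to $\Omega_4$ using \ref{cond:generic2} but then never explains how equisingularity of the discriminants is brought to bear on the diagonal locus; that link is the step your sketch is missing.
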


\begin{definition}  \label{def:generic linear projection} 
The projection $\ell_{\cal D} \colon \C^n \to \C^2$
  is \emph{generic} for $(X,0)$ if $\cal D \in \Omega$.
\end{definition}

\begin{remark} \label{rk:generic}
  \begin{enumerate}
\item[(1)]  Conditions \ref{cond:generic1} and \ref{cond:generic3} are  explicitly stated in \cite[Lemme-cl\'e V 1.2.2]{teissier}. Condition \ref{cond:generic2}, whose openness is proved in \cite[Chap. I, 6.4.2]{teissier},  appears implicitly in \cite[Lemme-cl\'e V 1.2.2]{teissier}  since it is used in its proof  as a pre-condition reducing $\Omega$ to obtain Condition  \ref{cond:generic3} (see \cite{teissier} bottom of page 463).
\item[(2)] Conditions \ref{cond:generic2} and \ref{cond:generic3} imply that  the family of plane curves
    $\ell_{\cal D}(\Pi_{\cal D'} )$ parametrized by
    $(\cal D,\cal D') \in \Omega \times \Omega$ is equisingular on a
    Zariski open neighborhood of the diagonal
    $\{(\cal D, \cal D) :\cal D \in \Omega \}$.
%\item[(3)] The condition $\Delta_{\cal D}$ reduced means that any
%$p\in\Delta_{\cal D}\setminus\{0\}$ has a neighborhood $U$ in $\C^2$
%such that one component of $(\ell_{\cal D}|_X)^{-1}(U)$ maps by a
%two-fold branched cover to $U$ and the other components map
%bijectively.
\end{enumerate}
\end{remark}

\begin{definition}[(Nash modification)]\label{def:Nash modification}
Let $\lambda \colon X\setminus\{0\} \to \grassman(2,\C^n)$ be the
  map which sends $x \in X\setminus\{0\}$ to the tangent plane
  $T_xX$. The closure $N X$ of the graph of $\lambda$ in $X
  \times \grassman(2,\C^n)$ is a reduced analytic surface. By
  definition, the \emph{Nash modification} of $(X,0)$ is the induced
  morphism $\mathscr N  \colon NX \to X$. 
\end{definition}

\begin{lemma}[{\cite[Part III,  Theorem
    1.2]{S}}, {\cite[Section 2]{GS}}]\label{le:nash}
A morphism $f \colon Y \to X$ factors through Nash modification if and
  only if it has no base points for the family of polar curves.
  \end{lemma}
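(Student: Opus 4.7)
My plan is to prove Lemma~\ref{le:nash} by realizing the Nash modification as the blow-up of a specific ideal sheaf on $X$ — the ideal generated by the equations of all polar curves — and then invoking the universal property of blowing up. The content therefore splits into two parts: (i) the geometric/Plücker identification of the Nash modification as a blow-up, and (ii) the translation of the universal property into the language of base points of the family of polar curves.

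For (i), I would fix local equations $F_1,\dots,F_{n-2}$ for $(X,0)\subset(\C^n,0)$ and consider the $(n-2)\times n$ Jacobian matrix $\mathrm{Jac}(F)$. At a smooth point $x\in X$ we have $T_xX=\ker\mathrm{Jac}(F)(x)$, and the Plücker coordinates of $T_xX$ are (up to sign) the $(n-2)\times(n-2)$ minors $M_{ij}(x)$ of $\mathrm{Jac}(F)(x)$ obtained by deleting columns $i<j$. Composing the Gauss map $\lambda$ with the Plücker embedding $\grassman(2,\C^n)\hookrightarrow\mathbb{P}^{\binom{n}{2}-1}$ therefore realises $NX$ as the closure of the graph of $x\mapsto[M_{12}(x):\cdots:M_{n-1,n}(x)]$. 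By the standard description of the closure of such a graph, this is exactly the blow-up $\mathrm{Bl}_{\mathcal{J}}X$, where $\mathcal{J}\subset\mathcal{O}_X$ is the ideal sheaf generated by the minors $M_{ij}$.

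Next I would identify $\mathcal{J}$ with the polar ideal $\mathcal{P}=(g_{\mathcal{D}})_{\mathcal{D}}$, where $g_{\mathcal{D}}$ is a local equation for $\Pi_{\mathcal{D}}$. After a linear change of coordinates one may assume $\mathcal{D}$ is the span of the first $n-2$ basis vectors, so that $\ell_{\mathcal{D}}$ is the projection to the last two coordinates; then the critical locus of $\ell_{\mathcal{D}}|_X$ is cut out by the single minor $M_{n-1,n}$, i.e.\ $g_{\mathcal{D}}=M_{n-1,n}$. Letting $\mathcal{D}$ vary through all $(n-2)$-planes amounts to applying arbitrary linear changes of coordinates; this action spans the $\C$-vector space of Plücker coordinates by linear combinations, so the $g_{\mathcal{D}}$'s generate precisely $\mathcal{J}$. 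Hence $NX=\mathrm{Bl}_{\mathcal{P}}X$.

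Finally, I would apply the universal property of blowing up: $f\colon Y\to X$ factors through $\mathrm{Bl}_{\mathcal{P}}X$ if and only if the pulled-back ideal sheaf $f^{-1}\mathcal{P}\cdot\mathcal{O}_Y$ is invertible. At a point $y\in Y$, invertibility of the stalk $(f^{-1}\mathcal{P}\cdot\mathcal{O}_Y)_y=(f^*g_{\mathcal{D}})_{\mathcal{D}}$ means that some single $f^*g_{\mathcal{D}_0}$ divides all the others, so that the family of strict transforms $\{f^{-1}(\Pi_{\mathcal{D}})\}_{\mathcal{D}}$ has no common point at $y$ — which is exactly the condition that $y$ is not a base point of the family of polar curves. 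The two directions of the equivalence then follow from the corresponding two directions in the universal property of blow-up.

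The main obstacle is step (ii), the identification $\mathcal{J}=\mathcal{P}$: it requires checking that the equations of the polar curves really do span the vector space of Plücker minors as $\mathcal{D}$ varies, rather than only some subspace. Everything else is either a routine application of the Plücker embedding or a direct invocation of the universal property of blowing up.
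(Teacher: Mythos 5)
The paper does not prove Lemma~\ref{le:nash}; it simply cites it from Spivakovsky and Gonz\'alez-Sprinberg, so there is no internal argument to compare against. Your proposal is in fact the standard route — realize the Nash modification as the blow-up of a Jacobian/Fitting-type ideal, match it with the ideal generated by the polar equations via Cauchy--Binet, and translate through the universal property of blow-up — and it is essentially what those references do. The step you flag as the main obstacle (that the $g_{\mathcal D}$ span the full space of Pl\"ucker minors) does work: the coefficients appearing in $g_{\mathcal D}=\sum_I p_I\,A_{I,\{1,\dots,n-2\}}$ are the Pl\"ucker coordinates of $\mathcal D$, and the decomposable tensors (image of the Pl\"ucker embedding) span all of $\Lambda^{n-2}\C^n$, since every basis vector $e_{i_1}\wedge\cdots\wedge e_{i_{n-2}}$ is decomposable. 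So as $\mathcal D$ runs through a Zariski-dense open of $\grassman(n-2,\C^n)$ the linear span of the $g_{\mathcal D}$ is exactly that of all the minors.

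The genuine gap in your write-up is the assumption that $(X,0)\subset(\C^n,0)$ is cut out by exactly $n-2$ equations. A normal surface singularity need not be a complete intersection (e.g.\ the cone over a rational normal curve of degree $\ge 3$), so the Jacobian matrix is generally $m\times n$ with $m>n-2$, and the critical locus of $\ell_{\mathcal D}|_X$ at smooth points is then cut out by the vanishing of \emph{all} $(n-2)\times(n-2)$ minors of the $m\times(n-2)$ block, not by a single minor. You should either argue at the level of the conormal sheaf/Fitting ideal of $\Omega^1_X$ (which handles $m>n-2$ uniformly and still reproduces Cauchy--Binet under column operations), or restrict your minor computation to a local choice of $n-2$ generating differentials near a generic smooth point and then check the resulting ideal is independent of that choice up to the operations that do not change the blow-up. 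As written, the identification $NX=\mathrm{Bl}_{\mathcal J}X$ and $\mathcal J=\mathcal P$ only stands for complete intersections.
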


\section{Statement of the  theorem}\label{statement}
  
  Let  $\ell \colon (X,0) \to (\C^2,0)$ be a generic projection, let $\Pi $ be its polar curve and let $\Delta = \ell(\Pi)$ be its discriminant curve. Denote by $\rho'_{\ell} \colon Y_{\ell} \to \C^2$ the minimal composition of blow-ups of points starting with the blow-up of the origin which resolves the base points of the family of projections of generic polar curves  $(\ell(\Pi_{\cal D}))_{\cal D \in \Omega}$.

\begin{definition} \label{def:Delta-curve}
 We say \emph{$\Delta$-curve} for an exceptional curve in
    {$(\rho'_{\ell})^{-1}(0)$} intersecting the strict transform of $\Delta$.  
    %A vertex of the resolution graph of $\rho'_{\ell}$ representing  a $\Delta$-curve is called a $\Delta$-node.
    \end{definition}
  
   Let us blow up all the intersection points between two
   $\Delta$-curves. We call $\sigma \colon Z_{\ell} \to Y_{\ell}$ and
   $\rho_{\ell}=\rho'_\ell\circ \sigma\colon Z_\ell\to\C^2$ the
   resulting morphisms (if no $\Delta$-curves intersect,
   $\rho_ {\ell} =  {\rho'_{\ell}}$).  
   % Then there are no adjacent $\Delta$-nodes in the resolution graph of $\rho_{\ell}$. 
  
 \begin{remark}\label{rem:notations}  By (2) of remark \ref{rk:generic}, the resolution graph of $\rho_{\ell}$ does not depend on $\ell$. We denote it by  $T$ for the rest of the paper.  
 \end{remark}

 \begin{definition} \label{def:node T} A  {\it $\Delta$-node} of $T$ is a vertex $(j)$ of $T$ which represents a $\Delta$-curve. If two $\Delta$-nodes are joined by a string of valency-two vertices which contains neither a $\Delta$-node nor the root vertex, we choose a vertex on that string, and we call it a {\it separation-node} (in particular, a vertex $(j)$ associated with an exceptional curve $C_j$ resulting from the blow-ups $\sigma\colon Z_\ell\to Y_\ell$ is a separation node). 
  
  A {\it node} of $T$ is  a vertex $(j)$ of $T$  which is either  the root-vertex or a  $\Delta$-node or a separation-node or a vertex with at least three incident edges.
  \end{definition}
 
Let $E \subset Y$ be a complex curve in a complex surface $Y$ and let $E_1,\ldots,E_n$ be the irreducible components of $E$. We say {\it curvette} of $E_i$ for any smooth curve germ $(\beta,p)$ in $Y$, where $p$ is a point of $E_i$ which is a smooth point of $Y$ and $E$ and such that  $\beta$ and $E_i$ intersect transversely. 

If $G$ is a graph, we will denote by $V(G)$ its set of vertices and by $E(G)$ its set of edges.

\begin{definition}\label{def:test curve}  Let  $C_i$  be the irreducible component of $\rho_{\ell}^{-1}(0) $ represented by $(i) \in V({T})$, so we have $\rho_\ell^{-1}(0) = \bigcup_{(i) \in V({T})} C_i$. For $(i) \in V({T})$ we call \emph{test curve at $(i)$} (of $\ell$)  any complex curve germ $(\gamma,0) \subset (\C^2,0)$ such that 
\begin{enumerate}
\item the strict transform $\gamma^*$   by  $\rho_\ell$ is a curvette of a $C_i$;
\item $\gamma^* \cap \Delta^* = \emptyset$. 
\end{enumerate}
 A test-curve $\gamma \subset (\C^2,0)$ at $(i)$ is called a {\it nodal test curve} if $(i)$ is a node of $T$. 
\end{definition}

\begin{definition}\label{def:node G resolution}  Let $\pi_0 \colon X_0 \to X$ be the minimal good  resolution of $X$ which factors through both the Nash modification and the blow-up of the maximal ideal and let $G_0$ be its resolution graph. For each vertex $(v)$ of $G_0$  we denote by $E_v$ the corresponding  irreducible component of $\pi_0^{-1}(0)$. A vertex $(v)$ of $G_0$ such that $E_v$ is an irreducible component of the blow-up of the maximal ideal (resp.\ an exceptional curve of the Nash transform)  is called an {\it $\cal L$-node} (resp.\ a {\it $\cal P$-node}) of $G_0$. 
\end{definition}

\begin{definition}\label{def:G'} Consider the graph $G'_0$ of $G_0$ defined as the union of all simple paths in  $G_0$ connecting  pairs of  vertices among $\cal L$- and $\cal P$-nodes. Let $\ell \colon (X,0) \to (\C^2,0)$ be a generic projection.
  Let  $\gamma$ be a test curve for $\ell$. A component $\widehat{\gamma}$ of $\ell^{-1}(\gamma)$ is called {\it principal} if its  strict transform by $\pi_0$ is either a curvette of a component $E_v$ with $v \in V(G'_0)$ or intersects $\pi_0^{-1}(0)$ at an intersection between two exceptional curves  $E_v$ and $E_{v'}$ such that both  $(v)$ and $(v')$ are  in $V(G'_0)$.
\end{definition}

We now define the outer and inner contacts between two complex curves on a complex surface germ.
%, which extends the notion of contact between two complex curves introduced in Subsection \ref{subsec:contact}.

 Throughout the paper, we use the ``big-Theta" asymptotic notation of Bachman-Landau:  given two function germs $f,g\colon ([0,\infty),0)\to ([0,\infty),0)$ we say $f$ is \emph{big-Theta} of $g$ and we write   $f(t) = \Theta (g(t))$ if there exist real numbers $\eta>0$ and $K \ge 1$ such that for all $t$ with $f(t)\le\eta$: $$\frac{1}{K }g(t) \leq f(t) \leq K g(t).$$

  Let $\mathbb S^{2n-1}_{\epsilon} = \{ x \in \C^n \colon \norm x_{\C^n} = \epsilon\}$.  Let $(\gamma_1,0)$ and $(\gamma_2,0)$ be two germs of complex curves inside $(X,0)$. Let $q_{out}=q _{out}(\gamma_1, \gamma_2)$ and $q_{inn}=q_{inn}(\gamma_1, \gamma_2)$ be the two rational numbers $\geq 1$ defined by 
$$ d_o(\gamma_1 \cap \mathbb S^{2n-1}_{\epsilon}, \gamma_2 \cap \mathbb S^{2n-1}_{\epsilon}) =  \Theta(\epsilon^{q_{out}}),$$
$$ d_i(\gamma_1 \cap \mathbb S^{2n-1}_{\epsilon}, \gamma_2 \cap \mathbb S^{2n-1}_{\epsilon}) =  \Theta(\epsilon^{q_{inn}}),$$
where $d_{i}$ means inner distance in $(X,0)$ as before. (The existence and  rationality of $q_{inn}$ will be a consequence of Proposition \ref{lem:complex real rates}). 

\begin{definition} We call $q _{out}(\gamma_1, \gamma_2)$ (resp.\ $q _{inn}(\gamma_1, \gamma_2)$) the {\it outer  contact exponent} (resp.\ {\it  the inner  contact exponent}) between $\gamma_1$ and $\gamma_2$.
\end{definition}

%When $X=\C^2$, then $q _{out}(\gamma_1, \gamma_2)= q _{inn}(\gamma_1, \gamma_2)$ and this coincides with the contact defined in Subsection \ref{subsec:contact}.

We now state the main  result of the paper.

\begin{theorem}[{(Test curve criterion for LNE of a complex surface)}] \label{cor:complex characterization of normal embedding} A normal surface germ  $(X,0)$  is LNE if and only if the following conditions are satisfied for all generic projections $\ell \colon (X,0) \to (\C^2,0)$ and nodal test curves $(\gamma,0) \subset (\C^2,0)$: 
\begin{itemize}
\item[($1^*$)] \label{iso} for all  principal components $\widehat{\gamma}$ of $\ell^{-1}(\gamma)$, $mult(\widehat{\gamma})=mult({\gamma})$ where $mult$ means multiplicity at $0$; 
\item[($2^*$)] \label{vertical} for all pairs $(\gamma_1,\gamma_2)$ of distinct
principal components of $\ell^{-1}(\gamma)$,
  $q_{inn}(\gamma_1,\gamma_2) = q_{out}(\gamma_1,\gamma_2)$.
\end{itemize}
\end{theorem}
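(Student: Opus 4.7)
The strategy is to derive Theorem \ref{cor:complex characterization of normal embedding} from Proposition \ref{cor:characterization of normal embedding}, which --- after two preliminary reductions of the arc criterion of Birbrair--Mendes --- characterises LNE in terms of pairs of real analytic test arcs obtained as slices $\widehat\gamma\cap\mathbb S^{2n-1}_{\epsilon}$ for $\widehat\gamma$ a principal component of $\ell^{-1}(\gamma)$ over a nodal test curve $\gamma$. My task is therefore one of translation: passing from a condition on pairs of real test arcs to the conditions $(1^*)$ and $(2^*)$ on the underlying complex curves.

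The translation of condition $(2^*)$ is direct: two complex curve germs $\gamma_1,\gamma_2\subset X$ give, upon intersection with $\mathbb S^{2n-1}_{\epsilon}$, real analytic arcs whose pairwise outer (resp.\ inner) distances are of order $\epsilon^{q_{out}(\gamma_1,\gamma_2)}$ (resp.\ $\epsilon^{q_{inn}(\gamma_1,\gamma_2)}$), the rationality of $q_{inn}$ being supplied by Proposition \ref{lem:complex real rates}. So $q_{inn}(\gamma_1,\gamma_2)=q_{out}(\gamma_1,\gamma_2)$ is exactly the real contact equality required by Proposition \ref{cor:characterization of normal embedding}. Condition $(1^*)$, on the other hand, serves to identify each principal complex component with a single faithfully parametrised real test arc: when $d:=mult(\widehat\gamma)/mult(\gamma)=\deg(\ell|_{\widehat\gamma})=1$, the projection $\ell$ sets up an outer bilipschitz correspondence between $\widehat\gamma$ and $\gamma$ in the spirit of Theorem \ref{generic projection bilipschitz}, and contact exponents can be transferred without loss; when $d\ge 2$, distinct sheets of $\widehat\gamma$ are collapsed by $\ell$ and one would need to test contact exponents sheet-by-sheet, which $(1^*)$ precisely forbids.

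With these dictionaries in place, the forward implication splits into two pieces. LNE gives equality of inner and outer contact exponents on every pair of real analytic arcs via the arc criterion (Theorem \ref{thm:arc criterion}), and this translates to $(2^*)$ through the first dictionary; condition $(1^*)$ is handled by the separate metric obstruction sketched below. For the reverse implication, conditions $(1^*)$ and $(2^*)$ together supply exactly the equality of real contact exponents on all pairs of principal real arcs that is needed to invoke Proposition \ref{cor:characterization of normal embedding}, yielding LNE.

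The main obstacle I expect is the necessity of $(1^*)$ under LNE, which does not follow by a direct application of the arc criterion. The projection $\ell$ is known to be outer bilipschitz only along the polar curve (Theorem \ref{generic projection bilipschitz}), and principal components generally do not lie on the polar locus, so there is no automatic control over how outer distances in $\C^n$ compare to distances on the image in $\C^2$. Extracting a quantitative contradiction with LNE from $d\ge 2$ will require combining the local model of $\ell$ near a non-polar component with inner-metric estimates coming from the geometric decomposition of $(X,0)$ recalled in Section \ref{sec:geometric decomposition}; this is where the finer topology of the resolution graph $G_0$, and in particular the role of the $\mathcal L$- and $\mathcal P$-nodes encoded in the definition of principal component, must enter.
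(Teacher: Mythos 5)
Your high-level strategy — reducing to Proposition \ref{cor:characterization of normal embedding} and translating between real test arcs and complex test curves — agrees with the paper, and the role you assign to Proposition \ref{lem:complex real rates} is the right one. But there are two substantive gaps.

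First, the ``if'' direction is not the direct dictionary you describe. Condition $(2^*)$ says $q_{inn}(\gamma_1,\gamma_2)=q_{out}(\gamma_1,\gamma_2)$, and by Proposition \ref{lem:complex real rates} these complex contact exponents are the \emph{maxima} of $q_i$ and $q_o$ over all pairs of real slices of $\gamma_1$ and $\gamma_2$. Equality of the maxima does not give $q_i(\delta_1,\delta_2)=q_o(\delta_1,\delta_2)$ for the \emph{particular} pair of slices required by Proposition \ref{cor:characterization of normal embedding}. Bridging this is the real content of the paper's ``if'' proof: one uses $(1^*)$ to see that $\ell|_{\widehat\gamma}$ is bijective, then invokes the monodromy Lemma \ref{rk:monodromy} (cyclic $\Z/km\Z$-action on $\widehat{\cal F}$ covering the $\Z/k\Z$-action on $\cal F$) to show that the contact exponent is the same for all ``aligned'' pairs of slices, and finally runs a two-case analysis on whether $q_{\nu_1,\nu_2}$ equals or is strictly less than $q_j$, the second case requiring Lemma \ref{lem:partner3}. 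None of this appears in your sketch, and the claim that $(1^*)$ and $(2^*)$ ``together supply exactly the equality of real contact exponents on all pairs of principal real arcs'' is precisely the non-trivial step that needs an argument.

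Second, for the necessity of $(1^*)$, your instinct that it does not follow by a bare application of the arc criterion is right, but the mechanism you gesture at (``local model of $\ell$ near a non-polar component'') is not the one the paper uses, and you assert incorrectly that when $d=1$ the projection $\ell$ itself is outer bilipschitz on $\widehat\gamma$ ``in the spirit of Theorem \ref{generic projection bilipschitz}'' --- that theorem needs $\ell$ to be generic for the curve, and $\ell$ generic for $(X,0)$ need not be generic for $\ell^{-1}(\gamma)$. The paper's argument introduces the LNE$_{test}$-resolution $\mu_0$ and, crucially, a \emph{second} generic projection $\ell'$ chosen to be generic both for $(X,0)$ and for the curve $\ell^{-1}(\gamma)$. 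Theorem \ref{generic projection bilipschitz} then applies to $\ell'|_{\widehat\gamma}$, so $\ell'(\widehat\gamma)$ has the same multiplicity as $\widehat\gamma$; comparing Puiseux data of $\ell'(\widehat\gamma)$ and a curvette $\gamma'$ of $C'_i$ forces, when $d\ge 2$, a characteristic exponent $q>q_{C_i}$, and producing a pair of arcs on $\widehat\gamma$ with outer contact $q$ but inner contact $q_{C_i}$ contradicts the arc criterion. Without the auxiliary projection $\ell'$ and the multiplicity comparison it licenses, there is no way to quantify the failure of $(1^*)$ against LNE.
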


\begin{remark} The definition of separation node (Definition
  \ref{def:node T}) depends on a choice of a vertex along a string
  joining two $\Delta$-nodes. However, the validity of Theorem
  \ref{cor:complex characterization of normal embedding} does not
  depend on this choice. In fact it follows from the proof that if all
  the principal components over one test curve at a vertex on such a
  string satisfy ($1^*$) and ($2^*$) then all principal components
  over all test curves at vertices on the string satisfy ($1^*$) and
  ($2^*$).
\end{remark}

\section{The real arc criterion for LNE of a semialgebraic germ}

\begin{definition}[(Real arcs)]
     Let $(X,0) \subset (\R^n,0)$ be a semialgebraic germ.  A \emph{real arc} on $(X,0)$ will mean the germ of a semialgebraic map $\delta \colon [0,\eta) \to X$ for some $\eta \in \R^+$, such that $\delta(0)=0$ and $\norm{\delta(t)}=t$ (see also Remark \ref{rk:parametrization}).

     When no confusion is possible, we will use the same notation for the arc $\delta$  and the germ of its parametrized image $\delta([0,\eta))$. 
 \end{definition}

\begin{definition} Let $(X,0) \subset (\R^n,0)$ be a semialgebraic germ and let $\delta_1 \colon [0,\eta)  \to X$ and $\delta_2  \colon [0,\eta)  \to X$  be two real arcs on $X$. The {\it outer contact} of $\delta_1$ and $\delta_2$  is  $\infty$ if $\delta_1=\delta_2$ and is otherwise the rational number $q_{o}=q_{o}(\delta_1,\delta_2)$ defined by:
$$\norm{\delta_1(t)-\delta_2(t)} = \Theta(t^{q_{o}}).$$

The {\it inner contact} of $\delta_1$ and $\delta_2$  is the rational number $q_{i}=q_{i}(\delta_1,\delta_2)$ defined by
$$d_{i}(\delta_1(t),\delta_2(t)) = \Theta(t^{q_i}).$$
\end{definition}

\begin{remark}\label{rk:parametrization} 1)~~The existence and rationality of $q_i$ comes from the fact that there exists a semialgebraic metric $d_P \colon X \times X \to \R$ (the so-called pancake metric)  such that $d_i$ and $d_P$ are bilipschitz equivalent (\cite{KO}).  
 
2) The inner and outer contacts $q_{i}(\delta_1,\delta_2)$ and $q_{o}(\delta_1,\delta_2)$ can also be defined taking reparametrizations by real slices of $\delta_1$ and $\delta_2$ as follows. First note that if $\delta_1$ and $\delta_2$ have different tangent directions then $q_i(\delta_1,\delta_2)=q_o(\delta_1,\delta_2)=1$, so we may assume they have the same tangent direction. We can choose coordinates $(x_1,\ldots,x_n)$ such that the tangent semi-line of $\delta_1$ and $\delta_2$ has $x_1>0$ except at $0$. For $j=1,2$, consider the  reparametrization $\widetilde{\delta}_j \colon [0,\eta) \to \R^n$ defined by   $\widetilde{\delta}_j(t) = \delta_j \cap \{x_1=t\}$. Then we have $\norm{\widetilde{\delta}_1(t)-\widetilde{\delta}_2(t)} = \Theta(t^{{q_o}}) $ and $d_{i}(\widetilde{\delta}_1(t),\widetilde{\delta}_2(t)) = \Theta(t^{{q_i}})$.

Indeed, this is an easy consequence of the following standard lemma:
\begin{lemma}Let $B\subset \C^n$ be any closed compact convex
    neighborhood of $0$ in $\C^n$. Let $\phi\colon B\to B_1$, where
    $B_1$ is the unit ball, be the homeomorphism which maps each ray
    from $0$ to $\partial B$ linearly to the ray with the same
    tangent, but of length $1$. Then the map $\phi\colon B\to B_1$ is
    a bilipschitz homeomorphism.
\end{lemma}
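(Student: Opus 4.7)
The plan is to express $\phi$ explicitly via the radial function of $B$, then exploit the Lipschitz continuity of the Minkowski gauge of a convex body. Let $R \colon \mathbb S^{2n-1} \to (0,\infty)$ be the radial function $R(u) = \sup\{t \geq 0 : tu \in B\}$, so that $\phi(x) = x/R(x/\norm{x})$ for $x \neq 0$ and $\phi(0)=0$. Since $B$ is a compact convex neighborhood of $0$, there exist constants $0 < a < b$ with $\{\norm{x} \leq a\} \subset B \subset \{\norm{x} \leq b\}$, which gives $a \leq R \leq b$ uniformly on the sphere.

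The key step is to show that $R$ is Lipschitz on the sphere. The Minkowski gauge $p(x) = \inf\{t > 0 : x \in tB\}$ is positively homogeneous of degree one and subadditive (the latter by convexity of $B$), and the inclusion $\{\norm{x} \leq a\} \subset B$ gives $p(x) \leq \norm{x}/a$. Two applications of subadditivity, $p(x) \leq p(y) + p(x-y)$ and its symmetric counterpart, yield $|p(x) - p(y)| \leq \norm{x-y}/a$. Restricting to the sphere, where $p = 1/R$, then produces a Lipschitz bound for $R$ in terms of $a$ and $b$.

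To conclude, I would decompose
$$\phi(x) - \phi(y) = \frac{x-y}{R(x/\norm{x})} + y\left(\frac{1}{R(x/\norm{x})} - \frac{1}{R(y/\norm{y})}\right)$$
and combine the uniform bounds $a \leq R \leq b$, the Lipschitz estimate for $R$, and the elementary inequality $\norm{x/\norm{x} - y/\norm{y}} \leq 2\norm{x-y}/\max(\norm{x},\norm{y})$. The resulting bound has the form $\norm{\phi(x) - \phi(y)} \leq C\norm{x-y}$ for a constant $C$ depending only on $a$ and $b$. Applying the same decomposition to the inverse $\phi^{-1}(y) = R(y/\norm{y})\,y$ yields a matching Lipschitz bound in the other direction.

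The only nontrivial point is the Lipschitz continuity of $R$ on $\mathbb S^{2n-1}$, which depends essentially on the convexity of $B$ through the subadditivity of $p$; without convexity, even continuity of $R$ could fail. After that, everything reduces to a routine combination of triangle inequalities.
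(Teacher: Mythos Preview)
Your argument is correct and is indeed the standard one. The paper does not actually supply a proof: it simply labels the lemma as ``standard'' and uses it, so there is nothing to compare against. Your route via the Minkowski gauge $p(x)=\inf\{t>0:x\in tB\}$ is exactly how this is usually done; the only genuinely non-formal step is the Lipschitz bound $|p(x)-p(y)|\le \norm{x-y}/a$ coming from subadditivity of $p$ together with $p\le\norm{\cdot}/a$, and you have isolated that correctly. One small cosmetic point: since $\phi(x)=p(x)\,x/\norm{x}$, you could bypass the passage through $R=1/p$ on the sphere and work directly with $p$, which slightly shortens the bookkeeping, but this changes nothing of substance.
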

\end{remark}
 
The following  result of Birbrair and Mendes is a characterization of closed semialgebraic germs which are LNE. 

\begin{theorem}[(The arc criterion for LNE, \cite{BM1})] \label{thm:arc criterion}
Let $(X,0)\subset (\R^m,0)$ be a closed semialgebraic germ. It is LNE if and only if for all pairs of real arcs $\delta_1$ and $\delta_2$ in $(X,0)$, 
$q_i(\delta_1,\delta_2) = q_o(\delta_1,\delta_2)$.
\end{theorem}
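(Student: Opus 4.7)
The plan is to derive Theorem \ref{cor:complex characterization of normal embedding} from the real arc criterion (Theorem \ref{thm:arc criterion}) by two successive reductions of the set of real arc pairs that must be tested, followed by translation of the resulting real-arc criterion into a criterion on complex test curves. This matches the skeleton announced in the introduction: Propositions \ref{prop:characterization of normal embedding2} and \ref{cor:characterization of normal embedding} carry out the reductions — from arbitrary pairs of real arcs on $(X,0)$ down to pairs of real slices of principal components of $\ell^{-1}(\gamma)$ for $\ell$ a generic projection and $\gamma$ a nodal test curve — and the final step consists in reinterpreting this in terms of complex multiplicities and complex contact exponents using Remark \ref{rk:parametrization}(2).

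For the $(\Leftarrow)$ direction, assume $(1^*)$ and $(2^*)$ and take any pair of real arcs $\delta_1,\delta_2\subset (X,0)$. By the two keystone propositions, the equality $q_i(\delta_1,\delta_2)=q_o(\delta_1,\delta_2)$ required by Theorem \ref{thm:arc criterion} reduces to the analogous equality for pairs of real slices of principal components of $\ell^{-1}(\gamma)$, with $\gamma$ a nodal test curve. By Remark \ref{rk:parametrization}(2) the real-slice exponents $q_i,q_o$ coincide with the complex exponents $q_{inn}$ and $q_{out}$ between the complex components themselves, so $(2^*)$ is exactly what is needed. Condition $(1^*)$, combined with Theorem \ref{generic projection bilipschitz}, is the bridge that makes this translation uniform: equal multiplicities force $\ell|_{\widehat\gamma}\colon\widehat\gamma\to\gamma$ to be bilipschitz for the outer metric, so that the scaling exponent along a real slice of $\widehat\gamma$ matches that of the corresponding real slice of $\gamma$ and the comparison of exponents is not distorted when one passes between the complex and real descriptions.

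For the $(\Rightarrow)$ direction, suppose $(X,0)$ is LNE. Theorem \ref{thm:arc criterion} then gives $q_i=q_o$ on every pair of real arcs. Specialising to pairs of real slices of distinct principal components over a nodal test curve and converting via Remark \ref{rk:parametrization}(2) yields $(2^*)$ immediately. For $(1^*)$, one fixes a principal component $\widehat\gamma$ with image $\gamma=\ell(\widehat\gamma)$ and argues that a strict inequality $mult(\widehat\gamma)>mult(\gamma)$ would force $\ell|_{\widehat\gamma}$ to be non-bilipschitz for the outer metric; comparing $\widehat\gamma$ to a neighbouring component (or to a generic curve along which $\ell$ is bilipschitz by Theorem \ref{generic projection bilipschitz}) then produces a pair of arcs on which the inner and outer exponents differ, contradicting LNE.

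The main obstacle, and the technical heart of the paper, is the pair of reductions themselves: showing that testing the arc criterion on the very restricted class of real slices of principal components over nodal test curves suffices to control all pairs of real arcs. This rests on the geometric decomposition of $(X,0)$ from Section \ref{sec:geometric decomposition}, on the interplay between the $\Delta$- and separation-nodes of $T$ and the $\cal L$- and $\cal P$-node structure of $G_0$ (which is why the definition of principal component is tailored to $G'_0$), and on Lemma \ref{le:nash}, which ensures that $\pi_0$ controls all polar directions of the generic family simultaneously. Granted these keystone propositions, what remains is essentially a dictionary between real slices of complex curves and the complex curves themselves, carried out using Remark \ref{rk:parametrization}(2) together with the bilipschitz results of Section 2.
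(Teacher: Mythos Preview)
You have misidentified the statement you were asked to prove. Theorem~\ref{thm:arc criterion} is the arc criterion of Birbrair and Mendes, quoted from \cite{BM1}; the paper does not prove it but uses it as a black box. What you have written is instead a sketch of the proof of Theorem~\ref{cor:complex characterization of normal embedding}, the main result of the paper, \emph{taking} Theorem~\ref{thm:arc criterion} as given. So as a proof of the stated theorem, your proposal is simply off-target.

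As a sketch of the main theorem your outline is broadly in the right spirit, but two points are inaccurate. First, the dictionary between real-slice contacts and complex contacts is not Remark~\ref{rk:parametrization}(2) (which only compares two parametrisations of the \emph{same} real arcs); it is Proposition~\ref{lem:complex real rates}, which says $q_{inn}(\gamma,\gamma')=\max_{k,l}q_i(\delta_k,\delta'_l)$ and similarly for the outer contact. The complex exponent is a maximum over real-slice pairs, not an equality for each pair, and the ``if'' direction of the paper's proof has to work to show that this maximum is actually attained by a vertically aligned pair (Case~1 versus Case~2, with Lemma~\ref{rk:monodromy} controlling how the monodromy permutes real slices). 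Second, your description of the role of $(1^*)$ is not quite how the paper uses it: in the ``if'' direction $(1^*)$ is used to guarantee that $\ell|_{\widehat\gamma}$ is bijective, so that over each real slice of $\gamma$ there is a \emph{unique} real slice of $\widehat\gamma$, which is what makes the monodromy argument go through; in the ``only if'' direction the failure of $(1^*)$ is exploited via a \emph{second} generic projection $\ell'$ (generic for $\ell^{-1}(\gamma)$) to produce a characteristic exponent $q>q_{C_i}$ and hence a pair of real arcs violating the arc criterion.
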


 %By \cite[Theorem 3.8]{A2}, a germ  of a complex analytic germ with isolated singularity is analytically equivalent to an algebraic complex germ. So given a normal surface germ, we can assume it is algebraic and apply Theorem \ref{thm:arc criterion}. 

\section{ The real test arc criterion for LNE of a normal complex surface germ} \label{sec:test curve criterion}

 \begin{definition} Let $(\gamma,0) \subset (\C^n,0)$ be a complex curve germ. Denote by $z_1,\ldots, z_n$ the coordinates of $\C^n$.  We assume that no tangent line to $\gamma$ is contained in the hyperplane $\{z_1=0\}$.  Fix  $e^{i \alpha} \in \Bbb S^1$.  We call the intersection $\gamma_{\alpha} = \gamma \cap \{z_1=e^{i \alpha} t, t \in \R^+ \}$ a {\it real slice} of $\gamma$. 
\end{definition} 

 \begin{definition}  \label{def:test arc}  
We call \emph{test  arc }   a  component of a real slice of a test curve $(\gamma,0) \subset (\C^2,0)$ (Definition \ref{def:test curve}). 
\end{definition}

  \begin{proposition} \label{prop:characterization of normal embedding2} A normal surface  $(X,0)$  is LNE if and only if  for all  generic projections $\ell \colon (X,0) \to (\C^2,0)$ and for all test arcs  $\delta$ of $\ell$,   any pair of components $\delta_1,\delta_2$ of $\ell^{-1}(\delta)$ satisfies  $q_i(\delta_1,\delta_2) = q_o(\delta_1,\delta_2)$.  
\end{proposition}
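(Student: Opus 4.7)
The direction $(\Rightarrow)$ is immediate from Theorem~\ref{thm:arc criterion}: if $(X,0)$ is LNE then every pair of real arcs has matching inner and outer contacts, and each component of $\ell^{-1}(\delta)$ is a real semialgebraic arc (after reparametrization by the norm, cf.\ Remark~\ref{rk:parametrization}(2)).

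For the converse, assume the test-arc hypothesis and pick arbitrary real arcs $\delta_1,\delta_2 \subset (X,0)$. The plan is to prove $q_i(\delta_1,\delta_2) = q_o(\delta_1,\delta_2)$ and then invoke Theorem~\ref{thm:arc criterion}. If the tangent half-lines of $\delta_1$ and $\delta_2$ differ both contacts equal $1$, so we may assume a common tangent direction; by Remark~\ref{rk:parametrization}(2) we may also take each $\delta_j$ to be a real slice of an irreducible complex curve $\gamma_j \subset (X,0)$.

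The key observation is that the contact computation splits into a \emph{horizontal} part lying in $(\C^2,0)$ and a \emph{vertical} part along a single fibre of $\ell$. Concretely, for each $t$ let $\tilde{\delta}(t)$ be a point of $\ell^{-1}(\ell(\delta_2(t)))$ closest to $\delta_1(t)$; as $t \to 0$ these trace a real arc $\tilde{\delta}$ on a complex branch $\tilde{\gamma} \subset (X,0)$ with $\ell(\tilde{\gamma})=\ell(\gamma_2)$. The triangle inequality gives a horizontal--vertical decomposition of both $d_o(\delta_1,\delta_2)$ and $d_i(\delta_1,\delta_2)$. The horizontal contributions, distances between $\delta_1$ and $\tilde{\delta}$, are bilipschitz equivalent via Theorem~\ref{generic projection bilipschitz} to the corresponding distances of their projections in $(\C^2,0)$, which is smooth and hence LNE; so the horizontal inner and outer exponents automatically coincide. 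The problem reduces to showing $q_i=q_o$ for the vertical pair $(\tilde{\delta},\delta_2)$, i.e.\ for two components of $\ell^{-1}(\delta)$ where $\delta=\ell(\delta_2)$ is a single real arc in $(\C^2,0)$.

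It remains to reduce such an arbitrary real arc $\delta \subset (\C^2,0)$ to a test arc. Its Zariski closure is a complex curve germ $\gamma$; one can try to deform $\gamma$ through an equisingular family of plane curves (using Proposition~\ref{prop:generic}) to a test curve $\gamma'$ whose strict transform is a curvette of some $C_i$ disjoint from $\Delta^*$, while $q_i$ and $q_o$ vary continuously along the deformation. The main obstacle is the case $\gamma \subset \Delta$, where $\ell^{-1}(\gamma)$ contains components of the polar curve $\Pi$: a test arc, being disjoint from $\Delta^*$ by definition, does not literally sit over $\Delta$. To cope with this I would exploit the equisingular family $(\Pi_{\cal D},\Delta_{\cal D})_{\cal D \in \Omega}$ and argue that polar arcs in $(X,0)$ are limits of preimages of test arcs of nearby generic projections, with $q_i$ and $q_o$ varying continuously in $\cal D$, so the hypothesis at those nearby projections passes to the limit. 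Combined with Theorem~\ref{thm:arc criterion}, this gives LNE of $(X,0)$.
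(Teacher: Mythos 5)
Your outline correctly identifies the two-step shape of the argument (reduce to pairs of vertically aligned arcs, then reduce to test arcs), but several of your individual steps do not hold up, and the places where you wave your hands are exactly where the paper's technical content lives.

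First, you cannot ``take each $\delta_j$ to be a real slice of an irreducible complex curve $\gamma_j \subset (X,0)$.'' Remark~\ref{rk:parametrization}(2) only says the contact exponents may be computed using the reparametrization $\widetilde\delta_j(t)=\delta_j\cap\{x_1=t\}$; it does not put the arcs on complex curves, and most real semialgebraic arcs on a surface germ lie on no complex curve. Theorem~\ref{thm:arc criterion} forces you to deal with genuinely real arcs. Second, the horizontal--vertical decomposition via ``a point of $\ell^{-1}(\ell(\delta_2(t)))$ closest to $\delta_1(t)$'' is not well defined as an arc (nearest-point choices may jump), and the assertion that the ``horizontal'' part is controlled because $\C^2$ is LNE misses the actual difficulty: the issue is that $\ell|_X$ is \emph{not} uniformly bilipschitz, precisely because the bilipschitz constant blows up along the polar curve. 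The paper's Proposition~\ref{liftingoftestarcs} is the careful version of your decomposition; it lifts the segment in $\C^2$ and must handle separately the cases where the lifted path does or does not enter a polar wedge $A(\alpha)$, invoking Lemma~\ref{lem:local bil bound} and the Polar Wedge Lemma~\ref{polar wedge lemma} to bound the Lipschitz constant off the wedge and using the polar rate $s$ inside it.

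Third, and most seriously, the reduction from an arbitrary real arc $\delta$ in $(\C^2,0)$ to a test arc cannot be done by ``deforming $\gamma$ through an equisingular family ... while $q_i$ and $q_o$ vary continuously.'' Contact exponents are rationals, and there is no continuity argument that transfers the equality $q_i=q_o$ from one member of a family to another; moreover the constants hidden in $\Theta(\cdot)$ can degenerate. The paper replaces this with two concrete tools: the partner-pair machinery (Lemma~\ref{lem:partner} and Lemma~\ref{lem:partner2}), which transports the arc criterion from a test arc to another arc by comparing lifts of the segment joining them in a fibre, and Lemma~\ref{LNE along strings}, which handles the case $q_i(\delta_1,\delta_2)=q_C$ by relating the arc criterion to the lifted Gauss map $\widetilde\lambda$ at the strict transforms under the Nash modification (using the Taylor expansion of $g(t,s)$ and Whitney's Lemma). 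Your proposal has no substitute for either of these steps, and these are exactly what makes the ``if'' direction work for arcs whose strict transform hits an intersection point of $\rho_\ell^{-1}(0)$. Your treatment of $\gamma\subset\Delta$ is also not needed in the paper's route, because Proposition~\ref{liftingoftestarcs} is stated only for $\delta$ with $\delta^*\cap\Delta^*=\emptyset$ and the freedom to vary the generic projection $\ell$ already avoids that case.

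In short: the skeleton is right, but the two reductions you propose are not proved, and the substitutes you offer (nearest-point lift, LNE of $\C^2$, equisingular continuity) do not supply the content that the polar-wedge estimates, the partner-pair lemmas, and the Gauss-map argument of Lemma~\ref{LNE along strings} actually carry.
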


Proposition \ref{prop:characterization of normal embedding2} will be proved in Section \ref{sec:proof}. Before this, we introduce the necessary material for the proof in Section \ref{sec:geometric decomposition} and we prove preliminary lemmas in Sections \ref{sec:vertically aligned} to  \ref{sec:strings}. 

  We will also prove later an enhanced version of this result  (Proposition \ref{cor:characterization of normal embedding})  which reduces again drastically  the amount of  pairs of test arcs and which is a version  in terms of real  test arcs  of the main Theorem \ref{cor:complex characterization of normal embedding}).

\section{The geometric decomposition of a normal surface germ} \label{sec:geometric decomposition}

\subsection{Pieces}Birbrair, Neumann and Pichon defined in \cite{BNP} some semialgebraic
metric spaces (with inner metric) called $A$-, $B$- and $D$-pieces,
which we will need in this paper. We give here the
definition and basic facts about these pieces (see \cite[Sections 11
and 13]{BNP} for more details).

   The pieces are topologically
  conical, but usually with metrics that make them shrink non-linearly
  towards the cone point.  We will consider these pieces as germs at
  their cone-points, but for the moment, to simplify notation, we
  suppress this.
  
  By $D^2$ we mean the standard unit disc in $\C$ with $\Bbb S^1$ as its boundary, and $I$ denotes the interval $[0,1]$. 
  
\begin{definition}[\bf$\boldsymbol{A(q,q')}$-pieces]\label{def:Aqq'}
  Let $q,q'$ be rational numbers such that $1\le q  < q'$. Let $A$ be
  the euclidean annulus $\{(\rho,\psi):1\le \rho\le 2,\, 0\le \psi\le
  2\pi\}$ in polar coordinates and for $0<r\le 1$ let $g^{(r)}_{q,q'}$
  be the metric on $A$:
$$g^{(r)}_{q,q'}:=(r^q-r^{q'})^2d\rho^2+((\rho-1)r^q+(2-\rho)r^{q'})^2d\psi^2\,.
$$ 
So $A$ with this metric is isometric to the euclidean annulus with
inner and outer radii $r^{q'}$ and $r^q$. The metric completion of
$(0,1]\times \Bbb S^1\times A$ with the metric
$$dr^2+r^2d\theta^2+g^{(r)}_{q,q'}$$ compactifies it by adding a single point at
$r=0$.  We call a metric space which is bilipschitz homeomorphic to
this completion an \emph{$A(q,q')$-piece} or simply  {an} \emph{$A$-piece}.
 \end{definition}
 
 \begin{definition}[\bf$\boldsymbol{B(q)}$-pieces]\label{def:Bq}  
   Let $F$ be a compact oriented $2$-manifold, $\phi\colon F\to F$ an
   orientation preserving diffeomorphism, and $M_\phi$ the mapping
   torus of $\phi$, defined as:
$$M_\phi:=([0,2\pi]\times F)/((2\pi,x)\sim(0,\phi(x)))\,.$$
Given a rational number   $q > 1$ , we will define a metric space
$B(F,\phi,q)$ which is topologically the cone on the mapping torus
$M_\phi$.
 
For each $0\le \theta\le 2\pi$ choose a Riemannian metric $g_\theta$
on $F$, varying smoothly with $\theta$, such that for some small
$\delta>0$:
$$
g_\theta=
\begin{cases}
g_0&\text{ for } \theta\in[0,\delta]\,,\\
\phi^*g_{0}&\text{ for }\theta\in[2\pi-\delta,2\pi]\,.
\end{cases}
$$
Then for any $r\in(0,1]$ the metric $r^2d\theta^2+r^{2q}g_\theta$ on
$[0,2\pi]\times F$ induces a smooth metric on $M_\phi$. Thus
$$dr^2+r^2d\theta^2+r^{2q}g_\theta$$ defines a smooth metric on
$(0,1]\times M_\phi$. The metric completion of $(0,1]\times M_\phi$
adds a single point at $r=0$.  Denote this completion by $B(F,\phi,q)$. We call a metric space which is bilipschitz homeomorphic to $B(F,\phi,q)$ a  
     \emph{$B(q)$-piece} or simply {a} \emph{$B$-piece}. 

A $B(q)$-piece such that $F$ is a disc   is called a \emph{$D(q)$-piece} or simply
 {a}   \emph{$D$-piece}.
\end{definition}

\begin{definition}[\bf Conical pieces]\label{def:p3}
 Given a compact smooth $3$-manifold $M$,
  choose a Riemannian metric $g$ on $M$ and consider the metric
  $dr^2+r^2g$ on $(0,1]\times M$. The completion of this adds a point
  at $r=0$, giving a \emph{metric cone on $M$}.  We call a metric space which is bilipschitz homeomorphic to a metric cone a \emph{conical piece}.   We will call  any conical piece a \emph{$B(1)$-piece}  (they were called $CM$-pieces in \cite{BNP}).
\end{definition}

The diameter of the image in $B(F,\phi,q)$ of a fiber  $ \{r\}\times \{\theta\}\times F$ is $\Theta(r^q)$. Therefore $q$ describes
a rate of shrink of the surfaces    $\{r\}\times \{\theta\}\times F$ in $B(F,\phi,q)$ with
respect to the distance $r$ to the point at $r=0$. Similarly, the
inner and outer boundary components of any $\{r\} \times \{ t \} \times A$ in an $A(q,q')$ have rate of shrink
respectively $q'$ and $q$ with respect to $r$.

\begin{definition}[\bf Rate]
  The rational number $q$ is called the \emph{rate} of $B(q)$ or  $D(q)$. The rational numbers $q$ and $q'$ are the two \emph{rates} of $A(q,q')$.
\end{definition}

\subsection{Classical plane curve theory} \label{subsec:contact}
  Let $(\gamma,0) \subset (\C^2,0)$ and $(\gamma',0)\subset (\C^2,0)$
be two distinct germs of irreducible complex plane curves. Let
$\mathbb S^{3}_{\epsilon} = \{ x \in \C^2 \colon \norm x =
\epsilon\}$.  Recall that the contact $q_{out}=q _{out}(\gamma, \gamma')$ of
$\gamma$ and $\gamma'$ is the rational number defined by:
$$ d_o(\gamma_1 \cap \mathbb S^{3}_{\epsilon},
\gamma_2 \cap \mathbb S^{3}_{\epsilon}) =  \Theta(\epsilon^{q_{out}}).$$
Equivalently, $q _{out}(\gamma, \gamma')$ is the
 largest $q$ for which   there exist  Puiseux
 expansions $y=f(x)$ and   $y=g(x)$ of $\gamma$ and $\gamma'$  {which} coincide for exponents $<q$ and {which} have distinct  coefficients for $x^q$.

Let $\rho \colon Y \to \C^2$ be a sequence of blow-ups of points starting with the blow-up of the origin of $\C^2$.    Let  $C_1,\dots, C_k$ be the irreducible components in $Y$ of the exceptional divisor of $\rho$ with $C_1$ being the exceptional curve of the first
blow-up. 

Let $(\gamma,0)$ and $(\gamma',0)$ be two irreducible curve germs whose strict transforms by $\rho$ meet a $C_i$ at two distinct smooth points of $\rho^{-1}(0)$. Then the contact $q_{out}(\gamma, \gamma') $ of $\gamma$ and $\gamma'$ in $\C^2$ does not depend on the choice of $\gamma$ and
$\gamma'$.

\begin{definition} \label{def:inner rate} We call
  $ q_{out}(\gamma, \gamma')$ the \emph{inner rate} of $C_i$ and we
  denote it by $q_{C_i}$, or simply $q_i$ when no confusion is possible. 
 \end{definition}
 
Let $T_0$ be the dual tree of $\rho$,  {i.e., the vertices  $(1), \ldots, (k)$ of $T_0$ are in bijection with the exceptional curves $C_1,\ldots,C_k$ and there is an edge between $(i)$ and $(j)$ if and only if  $C_i \cap C_j \neq \emptyset$. The vertex $(1)$, corresponding to the curve $C_1$,  is the root  of $T_0$. We weight  each vertex $(i)$ by the corresponding inner rate $q_i=q_{C_i}$.} 
 
A non-root vertex with one incident edge is called a \emph{leaf}. By classical theory on plane curves, we have:

\begin{proposition} \label{lem:increasing rates}   $q_{1} = 1$ and the inner rates along a
   path from the root to a leaf are strictly increasing.
 \end{proposition}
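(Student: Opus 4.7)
The plan is to prove both claims by induction on the number of blow-ups composing $\rho \colon Y \to \C^2$. For the base case (the single blow-up of $0 \in \C^2$), two curvettes of $C_1$ are smooth curve germs through $0$ with distinct tangent lines, which meet $\mathbb S^3_\epsilon$ at outer distance $\Theta(\epsilon)$, so $q_1 = 1$.

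For the inductive step, assume the conclusion is known for a resolution $\rho \colon Y \to \C^2$ with dual tree $T_0$, and let $\sigma \colon Y' \to Y$ blow up a point $p \in \rho^{-1}(0)$, creating a new exceptional curve $C$ with vertex $(v) \in T_0'$. The inner rates attached to vertices of $T_0$ are intrinsic contacts in $\C^2$, hence unchanged. So it suffices to verify $q_v > q_i$ whenever $(i)$ is adjacent to $(v)$ and lies on the path from the root to $(v)$ in $T_0'$; this happens either for a single vertex $(i)$ (when $p$ is a smooth point of $C_i$ in $Y$) or for the two vertices $(i), (i')$ (when $p = C_i \cap C_{i'}$).

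To prove $q_v > q_i$, I would take two curvettes $\beta_1, \beta_2$ of $C$ at distinct smooth points, with images $\gamma_1, \gamma_2 \subset (\C^2, 0)$; by definition $q_{out}(\gamma_1, \gamma_2) = q_v$. Taking a third curvette $\alpha$ of $C_i$ at a smooth point $p' \in C_i \setminus \{p\}$ with image $\gamma_\alpha$, the inductive description of $q_i$ gives $q_{out}(\gamma_j, \gamma_\alpha) = q_i$ for $j = 1, 2$. The ultrametric property of outer contact among irreducible plane curve germs (a consequence of the tree structure of the resolution) then forces $q_v \geq q_i$. To upgrade to strict inequality, I would pass to Puiseux expansions at $0$: the expansions of $\gamma_1, \gamma_2$ coincide up to and including the coefficient that fixes the common point $p \in C_i$, which sits one characteristic exponent beyond $q_i$, whereas those of $\gamma_1, \gamma_\alpha$ already differ at exponent $q_i$. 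The case $p = C_i \cap C_{i'}$ is handled by the same argument with either parent as a reference, yielding $q_v > q_i$ and $q_v > q_{i'}$.

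The main obstacle is precisely this final strict inequality. The ultrametric step is soft, but ruling out equality $q_v = q_i$ requires either the explicit local blow-up analysis sketched above, or (equivalently) the valuative formula $q_C = v_C(\ell)/m_C$, with $\ell$ a generic linear form and $m_C$ the $\C^2$-multiplicity of a generic curvette of $C$, whose strict increase under any point blow-up of $C_i$ can be checked directly in local charts of $\sigma$.
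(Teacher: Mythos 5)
Your inductive setup and the base case are fine, and the inductive step for a \emph{free} blow-up (blowing up a smooth point $p$ of $C_i$ not on any other exceptional component) is correct: curvettes of the new curve $C$ push down to branches whose strict transforms under $\rho$ pass through $p$, so their Puiseux expansions agree through the coefficient of $x^{q_i}$, giving $q_v > q_i$. Since $(v)$ is a leaf in that case, that is all the inductive step requires.

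The \emph{satellite} case $p = C_i \cap C_{i'}$ is where the argument breaks. You assert that when $p = C_i \cap C_{i'}$, both $(i)$ and $(i')$ lie on the path from the root to $(v)$, and you conclude $q_v > q_i$ \emph{and} $q_v > q_{i'}$. Both of these are wrong. After blowing up $p$, the edge $(i)-(i')$ becomes $(i)-(v)-(i')$; if $(i)$ is the one nearer the root then only $(i)$ lies on the path from the root to $(v)$, while $(v)$ now lies on the path from the root to $(i')$. What the proposition needs is therefore $q_i < q_v < q_{i'}$, and the second inequality goes the \emph{opposite} way from what you claim. Indeed $q_v$ is the Farey mediant of $q_i$ and $q_{i'}$ and sits strictly between them; Example \ref{example:graphs} in the paper illustrates this: $C_5$ arises from blowing up $C_3 \cap C_4$, and $q_3 = 3/2 < q_5 = 7/4 < q_4 = 5/2$. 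Your Puiseux argument cannot give $q_v > q_{i'}$ either, and tracing it through exposes the obstruction: the two curvettes of $C$ pushed down by $\rho$ do pass through $p$ on $C_{i'}$, but they do \emph{not} share the coefficient of $x^{q_{i'}}$ in a Puiseux expansion (their contact is only $q_v < q_{i'}$), so the step ``coinciding coefficient at that exponent'' fails. To close the gap you would need a genuinely different input for the satellite case, such as the valuative formula $q_{C} = a_C/m_C$ (with $a_C$ the order of $\rho^*\ell$ along $C$ for a generic linear form $\ell$ and $m_C$ the multiplicity of a generic curvette of $C$) together with the additivity $a_C = a_i + a_{i'}$, $m_C = m_i + m_{i'}$ under blow-up of $C_i \cap C_{i'}$, which yields $q_C$ as the mediant and hence the two-sided strict inequality. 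You mention this formula at the end but do not carry it out, and as written the proposal both misstates the required inequality and fails to prove it.
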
 

\begin{example}\label{example:graphs} Consider the curve germ
 $(\gamma,0)$ with Puiseux expansion $y= f(x)=x^{3/2}+ x^{7/4}$.  Its resolution graph $T_0$, with exceptional curves labeled in order of blow-up, is pictured on the left. Its arrow represents the strict transform $\gamma^*$, which meets $C_5$ at a smooth point.

\begin{center}
 \begin{tikzpicture}
   \draw[thin](0,0)--(0,2);
   \draw[thin](0, 1)--(-.8,1.5);
   \draw[thin](-.8,1.5)--(-.8,2.5);
   \draw[thin,>-stealth,->](-.8,1.5)--+(-.7,.5);
   \draw[fill=white](0,0)circle(2pt);\node(a)at(0.35,0){$-3$};\node(a)at(-0.25,0){\scriptsize{$C_1$}};
   \draw[fill=white](0,1)circle(2pt);\node(a)at(0.35,1){$-3$};\node(a)at(-0.2,.9){\scriptsize{$C_3$}};
   \draw[fill=white](0,2)circle(2pt);\node(a)at(0.35,2){$-2$};\node(a)at(-0.2,1.9){\scriptsize{$C_2$}};
   \draw[fill=white](-.8,1.5)circle(2pt);\node(a)at(-1.2,1.3){$-1$};\node(a)at(-.7, 1.2){\scriptsize{$C_5$}};
   \draw[fill=white](-.8,2.5)circle(2pt);\node(a)at(-1.2,2.4){$-2$};\node(a)at(-.6, 2.3){\scriptsize{$C_4$}};

\draw[thin](5,0)--(5,2);
\draw[thin](5,1)--(4.2,1.5);
\draw[thin](4.2,1.5)--(4.2,2.5);
\draw[fill=white](5,0)circle(2pt); 
\node(a)at(5.3,0){{\bf 1}};\node(a)at(4.75,0){\scriptsize{$C_1$}};
\draw[fill=white](5,1)circle(2pt); 
\node(a)at(5.3,1){{$\bf\frac32$}};\node(a)at(4.8,.9){\scriptsize{$C_3$}};
\draw[fill=white](5,2)circle(2pt);
\node(a)at(5.3,2){{\bf 2}};\node(a)at(4.8,1.9){\scriptsize{$C_2$}};
\draw[fill=white](4.2,1.5)circle(2pt); 
\node(a)at(3.9,1.5){{$\bf\frac74$}};\node(a)at(4.3, 1.2){\scriptsize{$C_5$}};
\draw[fill=white](4.2,2.5)circle(2pt);
\node(a)at(3.9,2.5){{$\bf\frac52$}};\node(a)at(4.4,2.3){\scriptsize{$C_4$}};
\end{tikzpicture}
\end{center}
By changing the coefficient of $x^{7/4}$ in $f(x)$ one obtains a
curve germ $\gamma'$ whose strict transform ${\gamma'}^*$ meets at a different
point of $C_5$. The contact exponent of $\gamma$ and $\gamma'$, and hence the
inner rate    {$q_5$} of $C_5$, is $7/4$. Similarly, replacing the coefficient of
$x^{3/2}$ in $f(x)$ by nonzero coefficients other than $1$ gives curves
  whose strict transforms meet $C_3$ at distinct smooth points and
  which have  {pairwise} contact $3/2$.  The inner rates    {$q_i$} for all $C_i$'s are shown in
  the picture on the right.
\end{example}

\subsection{Geometric decomposition and inner contacts in $\C^2$} \label{subsec: geometric decomposition of C^2}
For each $i=1,\dots,k$ let $N(C_i)\subset Y$ be a disk-bundle
neighborhood of $C_i$ and set
  $$\cal N(C_i):=\overline{N(C_i)\setminus \bigcup_{j\ne i} N(C_j)}.$$ 
Let $h\colon \C^2\to \C$ be a linear projection such that the point
$h^*\cap \rho^{-1}(0)\in C_1$ is a smooth point of $C_1$,   {where $^*$ means strict transform by $\rho$}. In the
neigbourhood $U$ of $\rho^{-1}(0)$ given by $|h|<\eta$ with $\eta$
sufficiently small, we have a decomposition of $Y$ into pieces as
follows:\\
--~for each $i=1\dots,k$, $\rho(\cal N(C_i))$ is a $B(q_{i})$-piece $B_i$;\\
--~a $B$-piece corresponding to a leaf of $T_0$ is   a $D$-piece;\\  
--~for each $q_{i}<q_{j}$ with $C_i\cap C_j\ne \emptyset$, $A_{ij}:=\rho(N(C_i)\cap N(C_j))$ is an   $A(q_{i},q_{j})$-piece.

We may assume our generic linear form $h \colon \C^2 \to \C$ has coordinates chosen so the   {$y$-axis} is the kernel of $h$. For $t \in \C$, we set $F_t := \{(x,y):x=t\}$.

 When $q_i \neq 1$, the fibers of the $B({q_i})$-piece  $B_i$ are the intersections   $B_i \cap F_t$ and by Proposition \ref{lem:increasing rates}, each  {connected} component of the fiber   $B_i \cap F_t$  is a  disc with discs inside removed, its  diameter is $\Theta(|t|^{q_i})$, and it shrinks uniformly as $t$ tends to $0$.   An $A(q_i,q_j)$-piece has annular fibers by intersecting with $F_t, t \in \C$.

This induces a decomposition of $F_t  $ as the union  $$F_t =   \bigcup_{i} (B_i \cap F_t) \cup \bigcup (A_{ij} \cap F_t).$$
 Consider the graph $\cal F$ defined as follows: the vertices are in
 bijection with the connected components of  $\bigcup_{i} (B_i \cap
 F_t) $ and  the edges between two vertices are in bijection with the
 annuli of $\bigcup (A_{i,j} \cap F_t)$ between the two corresponding
 components  of $\bigcup_{i} (B_i \cap F_t) $. We weight each vertex
 by the corresponding inner rate  and we  then have a natural
 surjective graph-map $$ \cal E \colon  \cal F \to T_0,$$ i.e., $\cal
   E(V(\cal F)) =V(T_0)$ and the image by $\cal E$ of an edge $(v,v')$
   of $\cal F$ is the edge $(\cal E(v),\cal E(v'))$ of
   $T_0$. Moreover, $\cal E$ preserves the inner rates, i.e., for each
   vertex $(v)$ of $\cal F$ the corresponding inner rate $q_v$
   satisfies: $q_v= q_{\cal E(v)}$. If   $(\nu)$ is a vertex of $\cal F$, we denote by $F_{t,\nu}$ the corresponding component of the decomposition of  $F_t$   {and we denote its inner rate by $q_{\nu}$}.  

\begin{definition} \label{def:geometric decomposition1}
We call the decomposition of  the germ $(\C^2,0)$ as the union of germs: $$ (\C^2,0) = \bigcup_{i \in V(T_0)} B_i \cup  \bigcup_{ {(}i {,j)} \in E(T_0)} A_{i,j}$$
\emph{the geometric decomposition of $(\C^2,0)$ associated to $\rho$}.
% \purple{\sout{ and we call $T_0$ its tree.}}

We call the decomposition of the real $2$-plane $F_t$ as the union of the   {$ F_{t,\nu}$ for $(\nu) \in V(\cal F)$} and intermediate annuli $F_t \cap A_{i,j}$
\emph{the geometric decomposition of $F_t$ associated to $\rho$} and we call the graph $\cal F$ weighted by inner rates its graph. 
\end{definition}

\begin{example}
 Using example \ref{example:graphs} we show below a schematic picture of $F_t$ next to the graph $\mathcal F$. The gray and white pieces are respectively component slices of $A$- and $B$-pieces.
\\
\begin{center}
\begin{tikzpicture}
\draw[thin](3,0)--(3,2);
\draw[thin](3,1)--(2.2,1.6);
\draw[thin](3,1)--(3.8,1.6);
\draw[thin](2.2,2.3)--(2.2,1.6);
\draw[thin](3.8,2.3)--(3.8,1.6);

\draw[fill=white](3,0)circle(2pt); 
\draw[fill=white](3,1)circle(2pt); 
\node(a)at(3.2,-.1){{\bf 1}};
\node(a)at(2.7,.9){{$\bf\frac32$}};
\draw[fill=white](3,2)circle(2pt);
\node(a)at(3.2,2){{$\bf2$}};
\draw[fill=white](2.2,1.6)circle(2pt); \draw[fill=white](3.8,1.6)circle(2pt); 
\node(a)at(3.95,1.3){{$\bf\frac74$}};\node(a)at(2.05,1.3){{$\bf\frac74$}};
\draw[fill=white](2.2,2.4)circle(2pt);\draw[fill=white](3.8,2.4)circle(2pt);
\node(a)at(2,2.3){{$\bf\frac52$}};\node(a)at(4,2.3){{$\bf\frac52$}};

\draw[fill=lightgray](0,1.2)circle(1.6);
\draw[fill=white](0,1.2)circle(1.4);
\draw[fill=lightgray](-.5,1.25)circle(.3);
\draw[fill=white](-.5,1.25)circle(.1); 
\draw[fill=lightgray](0,2)circle(.4);
\draw[fill=white](0,2)circle(.3);
\draw[fill=lightgray](0,2)circle(.15);
\draw[fill=white](0,2)circle(.04);
\draw[fill=lightgray](0,.4)circle(.4);
\draw[fill=white](0,.4)circle(.3);
\draw[fill=lightgray](0,.4)circle(.15);
\draw[fill=white](0,.4)circle(.04);

\draw[thin](-2,2.5)--(0,2);\node(a)at(-2.15,2.5){\small{$\bf\frac52$}};
\draw[thin](-2.5,2)--(0,1.8);\node(a)at(-2.65,2){\small{$\bf\frac74$}};
\draw[thin](-3,1.2)--(-1,.7);\node(a)at(-3.15,1.25){\small{$\bf\frac32$}};
\draw[thin](-2.18,1.2)--(-.5,1.25);\node(a)at(-2.3,1.25){\small{$\bf2$}};

                                   \node(a)at(-4,1.25){\small{$\bf1$}};
\draw[thin](-2.5,0.4)--(0,.6);\node(a)at(-2.65,0.4){\small{$\bf\frac74$}};
\draw[thin](-2,-.1)--(0,.4);\node(a)at(-2.15,-.1){\small{$\bf\frac52$}};

\end{tikzpicture}
\end{center}
  
\end{example}

We now explain how the geometric decomposition of $F_t$  associated with a suitable $\rho$ enables one to compute the contact between two real arcs  $(\delta,0)$ and $(\delta',0)$ in $(\C^2,0)$. 

Note that  $q_o(\delta,\delta')=q_i(\delta,\delta')$ since we are in $\C^2$. If  $\delta$ and $\delta'$ have distinct tangent semi-lines, then $q_i(\delta,\delta')=1$. Assume that $\delta$ and $\delta'$ have the same tangent real semi-line
and that in  suitable  coordinates $(x,y)$ of $\C^2$, $(\delta,0)$ and $(\delta',0)$ have parametrizations of the form $\delta(t) = (t,y(t))$ and  $\delta'(t) = (t,y'(t))$. Let $\rho$ be a sequence of blow-ups such that the strict transforms $\delta^*$ and ${\delta'}^*$ meet  $\rho^{-1}(0)$ at two distinct smooth points of $\rho^{-1}(0) $.  

\begin{lemma} \label{lem:inner} Let $(\nu)$ and $(\nu')$ be the two vertices of $\cal F$ such that $\delta(t)$ and $\delta'(t)$ intersect respectively $F_{t,\nu}$ and $F_{t,\nu'}$. Then $q_i(\delta, \delta') = q_{\nu, \nu'}$, 
where $q_{\nu, \nu'}$ denotes the maximum among minimum of inner rates  along paths from $(\nu)$ to $(\nu')$ in the graph $\cal F$ (in particular, if $(v)=(v')$ then $q_{v,v'}=q_v$). 
\end{lemma}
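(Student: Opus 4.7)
My approach is to establish matching upper and lower bounds on $d_i(\delta(t),\delta'(t))$ using the piece decomposition of $F_t$, together with the fact that $\C^2$ is flat so its inner metric coincides up to $\Theta$ with the Euclidean metric. The first preparatory step is to observe that $\cal F$ is a tree: since $F_t\cong\C$ is simply connected and the $B$-fibers $F_{t,\nu}$ are topological disks-with-holes glued along annular $A$-fiber components, the dual graph is a tree. Hence the ``max over paths of min'' defining $q_{\nu,\nu'}$ reduces to the minimum rate $\min_i q_{\nu_i}$ along the unique $\cal F$-path $(\nu)=(\nu_0),(\nu_1),\ldots,(\nu_k)=(\nu')$, realized at some vertex $(\mu_0)=(\nu_{i_0})$ with $q_{\mu_0}=q_{\nu,\nu'}$.

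For the upper bound $q_i(\delta,\delta')\ge q_{\nu,\nu'}$, I would concatenate arcs inside each $F_{t,\nu_i}$ of length at most its diameter $O(t^{q_{\nu_i}})$ with radial crossings of the intermediate $A$-fiber annuli of length $O(t^{\min(q_{\nu_i},q_{\nu_{i+1}})})$; the total length is $O(t^{\min_i q_{\nu_i}})=O(t^{q_{\nu,\nu'}})$, and $d_i(\delta(t),\delta'(t))$ is bounded above by this length since $\C^2$ is flat. For the reverse inequality $q_i(\delta,\delta')\le q_{\nu,\nu'}$ I use a crossing argument: since $(\mu_0)$ separates $(\nu)$ from $(\nu')$ in the tree $\cal F$, the piece $F_{t,\mu_0}$ separates $\delta(t)$ from $\delta'(t)$ in $F_t$, so any rectifiable arc $\alpha\subset\C^2$ between them, when orthogonally projected onto the affine plane $F_t$, yields an arc in $F_t$ of no greater length that must enter $F_{t,\mu_0}$ through one boundary component and exit through another. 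By the bilipschitz $B(q_{\mu_0})$-piece model of Definition~\ref{def:Bq}, the distance in $F_t$ between any two distinct boundary components of $F_{t,\mu_0}$ is $\Theta(t^{q_{\mu_0}})=\Theta(t^{q_{\nu,\nu'}})$. Taking infimum gives $d_i(\delta(t),\delta'(t))\ge c\,t^{q_{\nu,\nu'}}$, completing the lower bound.

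\textbf{Main obstacle.} The main subtlety is the crossing estimate in the lower bound: showing that moving between any two distinct boundary components of the disk-with-holes fiber $F_{t,\mu_0}$ costs length $\Theta(t^{q_{\mu_0}})$. The bilipschitz model of a standard $B(q)$-piece must be transferred to the actual piece in such a way that the bilipschitz constant is independent of $t$ (so that the $\Theta$-rate survives as $t\to 0$), and one must verify that the ``holes'' of $F_{t,\mu_0}$ (which correspond to higher-rate adjacent pieces, sitting on scales $t^{q'}$ with $q'>q_{\mu_0}$) are asymptotically negligible compared to the enclosing $t^{q_{\mu_0}}$-scale, so that no pair of boundary components of $F_{t,\mu_0}$ sits at a scale smaller than $\Theta(t^{q_{\mu_0}})$.
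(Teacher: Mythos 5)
Your proof is correct and follows essentially the same strategy as the paper's: construct a short path through the pieces realizing the $\cal F$-path with maximal minimum rate for the upper bound on $d_i$, and use the fact that any competing path must traverse the separating piece of minimal rate for the lower bound. Your observations that $\cal F$ is a tree and that the lower bound requires an explicit crossing estimate usefully make precise what the paper's terse assertion ``the length of $p_t$ is $\Theta(t^q)$'' glosses over, but they do not constitute a different route.
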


\begin{proof}  
  Any path $p_t$ between $\delta(t)$ and  $\delta'(t)$ corresponds to a path  from $(\nu)$ to $(\nu')$ in $\cal F$ which describes the sequence of $F_{t,  {\nu''}}$ and intermediate annuli crossed by $p_t$, and the length of $p_t$ is $\Theta(t^q)$ where $q$ is the minimal inner rate among vertices which are on the path. Since $d_i(\delta(t), \delta'(t))$ is the infimum of $length(p_t)$ among all such paths, this implies $q_i(\delta, \delta') \leq q_{\nu, \nu'}$.  Now,  choose a path from $(\nu)$  to $(\nu')$ maximizing $q$, so with $q=q_{\nu, \nu'}$ and then, a   path $p_t$ in $F_t$ from  $\delta(t)$   {to}  $\delta'(t)$ realizing it. The  path $p_t$ has length  $ \Theta(t^{q_{\nu, \nu'}})$. Therefore $q_{\nu, \nu'} \leq  q_i(\delta,\delta')$. 
  \end{proof}

\subsection{The Polar Wedge Lemma and  the geometric decomposition of a normal surface} \label{subsec:geometric decomposition}

\begin{definition} \label{def:Gauss} Let $(X,0) \subset (\C^n,0)$ be a
  complex surface and let ${\mathscr N } \colon NX \to X$ be the Nash
  modification of $X$ (Definition \ref{def:Nash modification}). The
  lifted Gauss map $\widetilde{\lambda} \colon NX \to \grassman(2,\C^n) $ is  the restriction to $NX$ of the projection of $X  \times \grassman(2,\C^n)$ on the second factor. 
\end{definition}

 Let  $\ell \colon\C^n\to \C^2$ be a linear projection such that the restriction $\ell|_X \colon (X,0) \to (\C^2,0)$ is generic. 
Let $\Pi$ and $\Delta$ be the polar and discriminant curves of $\ell|_X$.
 
 \begin{definition}   \label{local bilipschitz constant}  The \emph{local bilipschitz constant of $\ell|_X$} is the map
$K\colon X\setminus \{0\}\to \R\cup\{\infty\}$ defined as follows. It is
infinite on the polar curve and at a point $p\in X\setminus \Pi$ it is
the reciprocal of the shortest length among images of unit vectors in
$T_{p} X$ under the projection $\ell \mid_{T_{p} X} \colon T_{p} X\to \C^2$.
\end{definition}
 
    Let  $\Pi^*$ denote  the
  strict transform of the polar curve $\Pi$ by the Nash modification $\mathscr N $. Set $B_{\epsilon}  = \{ x \in \C^n \colon \norm x_{\C^n} \leq \epsilon\}$. 
 
 \begin{lemma} \label{lem:local bil bound}Given any neighborhood $U$ of\/ ${\Pi}^*\cap
  \mathscr N ^{-1}(B_\epsilon\cap X)$ in ${NX}\cap
  \mathscr N ^{-1}(B_\epsilon\cap X)$, the local bilipschitz constant $K$ of $\ell$
  is bounded on $B_\epsilon \cap (X\setminus \mathscr N (U))$.
\end{lemma}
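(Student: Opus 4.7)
\smallskip
\noindent\textbf{Proof plan.}
The idea is to lift the local bilipschitz constant $K$ to a continuous function on the Nash modification $NX$ and then apply a compactness argument.

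First I would define $\widetilde{K}\colon NX\to[1,\infty]$ by
$$\widetilde{K}(p,T)\;=\;\Bigl(\min_{v\in T,\,\norm v=1}\norm{\ell(v)}\Bigr)^{-1}.$$
This makes sense because the fiber over $(p,T)\in NX\subset X\times\grassman(2,\C^n)$ carries the distinguished plane $T=\widetilde{\lambda}(p,T)$. Since the lifted Gauss map $\widetilde{\lambda}$ is continuous (it is the restriction of a projection), and since the function $T\mapsto\bigl(\min_{v\in T,\,\norm v=1}\norm{\ell(v)}\bigr)^{-1}$ is continuous on $\grassman(2,\C^n)$ with values in $[1,\infty]$, taking the value $\infty$ exactly on the Schubert-type subvariety $\Sigma=\{T : \ell|_T\text{ is not injective}\}=\{T : T\cap\ker\ell\neq 0\}$, the composition $\widetilde{K}=\widetilde{K}\circ\widetilde{\lambda}$ is continuous as an extended-real-valued function. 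Over $X\setminus\{0\}$ the map $\mathscr N$ is a biholomorphism onto its image, and by definition $\widetilde K(p,T_pX)=K(p)$ there, so $\widetilde K$ is an honest continuous extension of $K\circ\mathscr N^{-1}$ to all of $NX$.

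Second, I would identify the infinity locus of $\widetilde K$ inside $\mathscr N^{-1}(B_{\epsilon}\cap X)$ with $\Pi^{*}$. Over $X\setminus\{0\}$ the equivalence
$\widetilde K(p,T_pX)=\infty\Longleftrightarrow T_pX\cap\ker\ell\ne 0\Longleftrightarrow p\in\Pi$
is immediate from the definition of the polar curve. The only question is on the exceptional fiber $\mathscr N^{-1}(0)$: one must check that no point $(0,T)\in NX$ outside $\Pi^{*}$ has $T\cap\ker\ell\ne 0$. This is precisely the genericity of $\ell$ for $(X,0)$ in the sense of Proposition~\ref{prop:generic} and Definition~\ref{def:generic linear projection}: by Teissier's description, $\Pi$ is the closure of $\{p\in X\setminus\{0\}: T_pX\supset\ker\ell\}$, so every limit tangent plane $T$ at $0$ meeting $\ker\ell$ non-trivially is the limit of tangent planes along some sequence in $\Pi$, hence $(0,T)\in\Pi^{*}$. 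Thus $\{\widetilde K=\infty\}\cap\mathscr N^{-1}(B_{\epsilon}\cap X)\subseteq\Pi^{*}\cap\mathscr N^{-1}(B_{\epsilon}\cap X)\subseteq U$.

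Third, I would conclude by compactness. The set $K_0:=\mathscr N^{-1}(B_{\epsilon}\cap X)\setminus U$ is closed in the compact set $\mathscr N^{-1}(B_{\epsilon}\cap X)$, hence compact. By the previous step $\widetilde K$ is finite on $K_0$ and, being continuous, attains a finite maximum $M$ there. For any $p\in B_{\epsilon}\cap(X\setminus\mathscr N(U))$, the whole fiber $\mathscr N^{-1}(p)$ is contained in $K_0$; in particular $p\notin\Pi$ (since $\Pi\cap B_{\epsilon}\subset\mathscr N(U)$), so $K(p)$ is finite and equals $\widetilde K(p,T_pX)\leq M$. This gives the desired uniform bound.

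The main obstacle is the second step, where one must exclude ``phantom'' points of $\mathscr N^{-1}(0)$ outside $\Pi^{*}$ at which $\widetilde K$ blows up; this is exactly what Teissier's genericity guarantees, and where the hypothesis that $\ell$ be generic (rather than merely finite) enters the argument.
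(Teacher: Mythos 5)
Your proof takes essentially the same approach as the paper: lift $K$ through the lifted Gauss map to the continuous function $\kappa\circ\widetilde\lambda$ on $NX$ (what you call $\widetilde K$), observe that it is finite away from $\Pi^*$, and conclude by compactness of $\mathscr N^{-1}(B_\epsilon\cap X)\setminus U$; this is exactly what the paper does. Two small remarks. First, $\Pi$ is the closure of $\{p : T_pX\cap\ker\ell\neq 0\}$, not of $\{p : T_pX\supset\ker\ell\}$ (the latter is empty as soon as $n>4$); you have the correct condition earlier in the same paragraph, so this is just a slip. Second, the ``so'' in your step~2 is not quite a proof: a point $(0,T)\in NX$ only tells you that $T$ is a limit of tangent planes $T_{p_n}X$ along \emph{some} sequence $p_n\to 0$ in $X\setminus\{0\}$, not one lying in $\Pi$, so the assertion that $(0,T)\in\Pi^*$ whenever $T\cap\ker\ell\neq 0$ does not follow from the closure description of $\Pi$ alone. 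The honest argument is that $\widetilde\lambda^{-1}(\Sigma_{\cal D})$ is a divisor in $NX$ and genericity of $\cal D$ ensures no component of $\mathscr N^{-1}(0)$ lies in it, so its support coincides with $\overline{\mathscr N^{-1}(\Pi\setminus\{0\})}=\Pi^*$. That said, the paper's own proof states ``takes finite values outside $\Pi^*$'' with no further justification, so your write-up is at the same level of rigor as the source.
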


\begin{proof}  
  Let $\kappa \colon
  \grassman(2,\C^n) \to \R \cup \{\infty\}$ be the map sending $H \in
  \grassman(2,\C^n)$ to the bilipschitz constant of the restriction
  $\ell|_{H}\colon H \to \C^2$.  The map $\kappa \circ \widetilde{\lambda}$ coincides with $K \circ {\mathscr N }$ on $NX  \setminus
  {\mathscr N }^{-1}(0)$ and takes finite values outside ${\Pi}^*$. The map
  $\kappa  \circ \widetilde{\lambda}$ is continuous and therefore bounded on the
  compact set ${\mathscr N }^{-1}(B_{\epsilon}) \setminus U$.
  \end{proof}
  
The \emph{polar wedge lemma} will introduce some particular sets $\mathscr N (U)$ called {\emph{polar wedges}} which will be $D$-pieces. 

Consider the resolution $\rho'_{\ell} \colon Y_{\ell} \to \C^2$ which resolves the base points of the family of projections of generic polar curves  $(\ell(\Pi_{\cal D}))_{\cal D \in \Omega}$ and let $ \rho$ be $\rho'_{\ell}$ composed with some finite sequence of additional blow-ups of points and $T_0$ the resolution graph of $\rho$.  

 Let $\Pi_0$ be a component of the polar curve $\Pi$ of $\ell$ and let  $\Delta_0= \ell (\Pi_0)$   be the corresponding component of the discriminant curve $\Delta = \ell(\Pi)$. Let  $C_i$ be the irreducible component $(\rho'_{\ell})^{-1}(0)$ which intersects $\Delta_0^*$ and let $(u,v)$ be local coordinates in $Y_{\ell}$ centered at $p =  \Delta_0^*\cap(\rho'_{\ell})^{-1}(0)$ such that $v=0$ is the local equation of $C$. 
 
\begin{definition} \label{def:polar rate}
The rate $q_i$   {associated with $C_i$} is called the {\it polar rate} of $\Pi_0$ and $\Delta_0$. 
\end{definition}

\begin{lemma}[(Polar Wedge Lemma, {\cite[Proposition 3.4]{BNP}})]
  \label{polar wedge lemma} For small $\alpha>0$,
    consider the disc $D(\alpha)=\{u \in \C \colon |u|\leq \alpha\}$
    centered at $p=0$ in $C_{i}$ and let $D(\alpha) \times D^2$
    be the total space of the restriction of the  disc bundle $N(C_i)$
    over $D(\alpha)$. Set  $\cal N_{\Delta_0} = \rho (D(\alpha) \times
    D^2)$ and   let  $A_{\Pi_0}(\alpha)$ be the component of
    $\ell^{-1}( {\cal N}_{\Delta_0})$ which contains $\Pi_0$. Then
   \begin{enumerate}
   \item \label{polar2}
     $A_{\Pi_0}(\alpha)$ is a $D(q_i)$-piece, and when $q_i>1$, it is fibered by its intersections with the real surfaces $\{h=t\} \cap X$, where $h\colon \C^n\to \C$ is a generic linear form;
   \item \label{polar1} The strict transform $A_{\Pi_0}(\alpha)^*$ by $\mathscr N $ is a neighborhood of\/ $\Pi_0^*$ in $NX$  which has limit $\Pi_0^*$ as $\alpha\to 0$.
\end{enumerate}
   \end{lemma}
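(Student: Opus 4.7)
The plan is to construct the $D(q_i)$-structure on $A_{\Pi_0}(\alpha)$ by pulling back the $D(q_i)$-structure of $\cal N_{\Delta_0}$ in $(\C^2,0)$ via the finite map $\ell$, using the polar wedge's proximity to the ramification locus $\Pi_0$ to control how $\ell$ deforms the metric.

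First I would verify that $\cal N_{\Delta_0}$ is itself a $D(q_i)$-piece of $\C^2$. By construction, $D(\alpha) \times D^2 \subset N(C_i)$ is a tubular disc-bundle neighborhood of a small disc around $p = \Delta_0^*\cap C_i$ inside the exceptional curve $C_i$. Its image $\cal N_{\Delta_0}$ under $\rho$ is contained in the $B(q_i)$-piece $\rho(\cal N(C_i))$ associated with $C_i$ in the geometric decomposition of $(\C^2,0)$ from Subsection \ref{subsec: geometric decomposition of C^2}. Because $h$ is chosen generically (its kernel is not tangent to $\Delta_0$ at $0$), each slice $\cal N_{\Delta_0}\cap F_t$ is a disc of diameter $\Theta(|t|^{q_i})$ containing $\Delta_0\cap F_t$, so $\cal N_{\Delta_0}$ is a $D(q_i)$-piece fibered by these slices.

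Next I would analyze $A_{\Pi_0}(\alpha)$, the component of $\ell^{-1}(\cal N_{\Delta_0})$ containing $\Pi_0$. Since $\ell$ is finite with discriminant $\Delta$, over a small neighborhood of $\Delta_0$ the preimage decomposes into unramified sheets together with a single branched sheet, and the latter is exactly $A_{\Pi_0}(\alpha)$. By Lemma \ref{lem:local bil bound}, the local bilipschitz constant $K$ of $\ell$ is bounded on $X\setminus \mathscr N(U)$ for every neighborhood $U$ of $\Pi^*$ in $NX$; applied with $U$ arbitrarily thin around $\Pi_0^*$, this shows that $\ell$ is uniformly bilipschitz outside an arbitrarily thin tubular neighborhood of $\Pi_0$. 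Inside that thin tubular neighborhood $\ell$ is analytically modeled by a branched covering of discs, which preserves the $D$-piece structure. Combined, these two facts give that $A_{\Pi_0}(\alpha)$ is bilipschitz homeomorphic to a $D(q_i)$-piece, and when $q_i > 1$ the fibration of $\cal N_{\Delta_0}$ by the $F_t$ pulls back to the fibration of $A_{\Pi_0}(\alpha)$ by $\{h=t\}\cap X$, since $h$ is (up to composition with $\ell$) the first coordinate of the projection.

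For part (2) I would invoke Lemma \ref{le:nash}: by definition $\rho'_\ell$ resolves the base points of the family of polar curves of generic projections, so $\rho_\ell=\rho'_\ell\circ\sigma$ also has no such base points, hence the map it induces on $X$ factors through $\mathscr N$. Consequently $\Pi_0^*$ is a smooth curve in $NX$ meeting $\mathscr N^{-1}(0)$ transversely at a single point, and $A_{\Pi_0}(\alpha)^*$ is a disc-bundle neighborhood of $\Pi_0^*$ whose intersection with $\mathscr N^{-1}(0)$ shrinks to that point as $\alpha\to 0$. The main obstacle will be the second step: ensuring that the branched covering $\ell\colon A_{\Pi_0}(\alpha)\to \cal N_{\Delta_0}$ yields exactly the rate $q_i$, not a modified rate caused by the ramification. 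One must show that, although $K$ blows up along $\Pi_0$, the thin ramified sliver of $A_{\Pi_0}(\alpha)$ is metrically negligible compared to the whole $D(q_i)$-piece; this is essentially the statement that the relevant Puiseux exponents of $\Pi_0$ in the chosen coordinates match those of $\Delta_0$, which holds because $\ell$ is generic for the polar curve in the sense of condition (\ref{cond:generic3}) of Proposition \ref{prop:generic}.
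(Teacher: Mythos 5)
This lemma is not proved in the paper: it is cited directly as {\cite[Proposition 3.4]{BNP}}, so there is no internal proof to compare your attempt against. Judged on its own merits, your outline has the right large-scale structure (control $\ell$ away from $\Pi_0$ via Lemma~\ref{lem:local bil bound}, then treat the thin region around $\Pi_0$ separately), but the step that carries all the weight is missing.

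The crucial gap is the sentence ``Inside that thin tubular neighborhood $\ell$ is analytically modeled by a branched covering of discs, which preserves the $D$-piece structure.'' This is precisely the content of the lemma, not a fact you may assume. A branched covering of discs does not, in general, preserve the rate of a $D$-piece: the whole subtlety of the polar wedge is that the local bilipschitz constant of $\ell$ is unbounded along $\Pi_0$, so the metric on $A_{\Pi_0}(\alpha)$ could in principle have a different shrink rate than $\cal N_{\Delta_0}$. The proof in \cite{BNP} establishes the rate by an explicit computation using Puiseux parametrizations of $\Pi_0$ in suitable local coordinates, directly estimating the diameter of the slices $A_{\Pi_0}(\alpha)\cap\{h=t\}$; your appeal to the genericity condition~\ref{cond:generic3} of Proposition~\ref{prop:generic} only gives that $\ell\vert_{\Pi_0}$ is bilipschitz onto $\Delta_0$, which is a statement about the one-dimensional curve and says nothing quantitative about a three-real-dimensional neighborhood of it. Likewise ``the thin ramified sliver is metrically negligible'' is circular: the polar wedge \emph{is} that sliver, and its metric geometry is what you are trying to determine. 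Finally, for part~(2), Lemma~\ref{le:nash} concerns morphisms $Y\to X$, not $\rho_\ell\colon Z_\ell\to\C^2$; the morphism that factors through $\mathscr N$ is $\pi_\ell\colon X'_\ell\to X$, and even then the claimed shrinking of $A_{\Pi_0}(\alpha)^*$ to $\Pi_0^*$ as $\alpha\to 0$ requires an argument linking the size of the wedge to the Gauss map, which you do not supply.
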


\begin{definition} \label{def:polar wedge} The set
   $$A(\alpha) = \bigcup_{\Pi_0 \subset \Pi} A_{\Pi_0}(\alpha)$$ is
     called a \emph{polar wedge} around $\Pi$,  and its image $\cal N_\Delta(\alpha)$ by $\ell$ is a \emph{$\Delta$-wedge}.
\end{definition} 

Notice that $\cal N_{\Delta_0}$ is a $D(q_i)$-piece inside the $q_i$-piece $B_i = \rho(\cal N(C_i))$. We can refine the geometric decomposition of $\C^2$ associated with $\rho$ by decomposing $B_i$ into the union of  $ {\cal N}_{\Delta_0}$ and $\overline{B_i \setminus {\cal N}_{\Delta_0}}$, which is still a $B(q_i)$-piece. As a consequence of Lemma \ref{lem:local bil bound} and of the Polar Wedge Lemma \ref{polar wedge lemma}, each component $B(q)$ (resp.\ $A(q,q')$) of this geometric decomposition of $\C^2$ lifts to components of the same type $B(q)$ (resp.\ $A(q,q')$) which are fibered  by their intersections with the real surfaces $\{h=t\} \cap X$. We obtain a geometric decomposition of the germ $(X,0)$ as a union of $B$- and 
$A$- pieces. If $\Delta_0$ is a component of $\Delta$, with the same notations as above, we can now amalgamate each $D(q_i)$-piece component of  $\ell^{-1}(\cal N_{\Delta_0}))$  with the adjacent component of $\ell^{-1}(B'_i)$, forming a new $B(q_i)$-piece which is a component of $\ell^{-1}(B_i)$ (see \cite[Lemma 13.1]{BNP})

Summarizing, we then obtain that each piece of the  geometric decomposition  of $\C^2$ associated with $\rho$ lifts to a  union of pieces of the same type and rate, giving a geometric decomposition of $(X,0)$ as a union of $A$- and $B$- pieces. 

\begin{definition}
We call the decomposition $$(X,0) =  \bigcup_{i \in V(T_0)} \ell^{-1}(B_i) \cup   \bigcup_{(i,j) \in E(T_0)} \ell^{-1}(A_{i,j})$$
the \emph{geometric decomposition} of $(X,0)$ associated with $\rho$. 
\end{definition}

Let $\cal G$ be the graph whose vertices are in bijection with the component $B$-pieces of the decomposition and whose edges are in bijection with intermediate $A$-pieces in such a way that the edge associated with an $A$-piece joins the two vertices corresponding to the two $B$-pieces adjacent to it. We weight each vertex by the rate of the corresponding  $B$-piece. We then obtain a natural surjective graph-map $\cal C \colon \cal G \to T_0$. 

\subsection{Inner contacts between real arcs on a normal surface}\label{sec: inner
  contact on a normal surface}

Let $(\delta,0) \subset (\C^2,0)$ be a real arc whose strict transform by $\rho$ intersects $\rho^{-1}(0)$ at a smooth point. We now extend the results of Section  \ref{subsec: geometric decomposition of C^2} to compute the inner contact  in $(X,0)$ between two real arcs components $\delta_1$  and $\delta_2$ of $\ell^{-1}(\delta)$. 

We choose coordinates $\ell\colon (x_1,\dots,x_n) \to(x_1,x_2)$ and so that  $\delta(t) = (t,x_2(t))$. Intersecting the pieces of the geometric decomposition of $(X,0)$ associated with $\rho$ with the real surface ${\widehat F}_t = \{x_1=t\} \cap X$, we get a decomposition of $\widehat {F}_t$. Let $\widehat{\cal F}$ be the graph whose vertices are in bijection with the components of the intersections of the  $B$-pieces, the edges are in bijection with intermediate annuli and the vertices are weighted  by inner rates (we call it the \emph{fiber-graph}).   If $(\nu)$ is a vertex of  $\widehat{\cal F}$, we will denote by $\widehat{F}_{t,\nu}$ the corresponding component of the decomposition of $\widehat {F}_t$.  

The following is an extension of Lemma  \ref{lem:inner}. 

 \begin{lemma}\label{lem:inner X}
Let $(\nu_1)$ and $(\nu_2)$ be the two vertices of $\widehat{\cal F}$ such that $\delta_1(t)$ and $\delta_2(t)$ intersect respectively $\widehat F_{t,\nu_1}$ and $\widehat F_{t,\nu_2}$.
Then $q_i(\delta_1, \delta_2) = q_{\nu_1, \nu_2}$,  where $q_{\nu_1, \nu_2}$ denotes the maximum among minimum of inner rates  along paths from $(\nu_1)$ to $(\nu_2)$ in the fiber-graph $\widehat{\cal F}$
(again with $q_{\nu_1, \nu_2} = q_{\nu_1}$ if $(\nu_1)=(\nu_2)$). 
\end{lemma}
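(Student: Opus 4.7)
The plan is to extend the argument of Lemma \ref{lem:inner} from $(\C^2,0)$ to $(X,0)$, relying on the geometric decomposition of $(X,0)$ associated with $\rho$ built in Section \ref{subsec:geometric decomposition}. The essential new input is that each component of $\ell^{-1}(B_i)$ is itself a $B(q_i)$-piece, each component of $\ell^{-1}(A_{i,j})$ is an $A(q_i,q_j)$-piece, and both are fibered by their intersections with the real surfaces $\widehat F_t=\{x_1=t\}\cap X$. In particular, for a vertex $(\nu)\in V(\widehat{\cal F})$ the fibre $\widehat F_{t,\nu}$ has inner diameter $\Theta(t^{q_\nu})$, while an intermediate annulus between vertices $(\nu)$ and $(\nu')$ of $\widehat{\cal F}$ with $q_\nu<q_{\nu'}$ has boundary circles of diameter $\Theta(t^{q_\nu})$ and $\Theta(t^{q_{\nu'}})$, and shrinks to a point as $t\to 0$. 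Via Remark \ref{rk:parametrization}(2), it suffices to estimate $d_i\bigl(\widetilde\delta_1(t),\widetilde\delta_2(t)\bigr)$ where $\widetilde\delta_j(t)=\delta_j\cap\{x_1=t\}$.

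For the upper bound $q_i(\delta_1,\delta_2)\ge q_{\nu_1,\nu_2}$, I would pick a path $(\nu_1)=(\mu_0),(\mu_1),\dots,(\mu_r)=(\nu_2)$ in $\widehat{\cal F}$ that realises the max-min value $q_{\nu_1,\nu_2}=\min_k q_{\mu_k}$, and then build a corresponding path $p_t$ inside $\widehat F_t$ from $\widetilde\delta_1(t)$ to $\widetilde\delta_2(t)$ which successively crosses the fibres $\widehat F_{t,\mu_k}$ and the intermediate annuli. Because each piece traversed has inner diameter $\Theta(t^{q_{\mu_k}})$ with $q_{\mu_k}\ge q_{\nu_1,\nu_2}$, the total length of $p_t$ is $O(t^{q_{\nu_1,\nu_2}})$, so $d_i\bigl(\widetilde\delta_1(t),\widetilde\delta_2(t)\bigr)=O(t^{q_{\nu_1,\nu_2}})$.

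For the lower bound $q_i(\delta_1,\delta_2)\le q_{\nu_1,\nu_2}$, take any rectifiable path $p_t$ in $X$ from $\widetilde\delta_1(t)$ to $\widetilde\delta_2(t)$. The sequence of geometric pieces that $p_t$ visits defines a walk in $\widehat{\cal F}$ starting at $(\nu_1)$ and ending at $(\nu_2)$; whenever $p_t$ crosses a piece of rate $q$, the local model $dr^2+r^2d\theta^2+r^{2q}g$ of a $B(q)$-piece (respectively the analogous model of an $A(q,q')$-piece) forces $p_t$ to accumulate length at least a positive constant times $t^q$, since the $r$- and $\theta$-directions cost at rates $1$ and $\Theta(r)$ and therefore cannot be used to shortcut the fibre-diameter cost. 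Hence $\mathrm{length}(p_t)\ge \Omega(t^{q'})$ where $q'$ is the minimum rate along the associated walk, and since the maximum of such minima over walks from $(\nu_1)$ to $(\nu_2)$ is by definition $q_{\nu_1,\nu_2}$, taking the infimum over paths gives $d_i\bigl(\widetilde\delta_1(t),\widetilde\delta_2(t)\bigr)\ge \Omega(t^{q_{\nu_1,\nu_2}})$. The point requiring the most care is justifying that leaving $\widehat F_t$ and re-entering it elsewhere cannot beat the in-fibre estimate; this is exactly what the product structure of the $A$- and $B$-pieces guarantees, and once granted the two bounds combine to give $q_i(\delta_1,\delta_2)=q_{\nu_1,\nu_2}$.
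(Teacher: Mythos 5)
Your proposal is correct and follows essentially the same route as the paper, whose proof simply notes that the argument of Lemma~\ref{lem:inner} extends verbatim: any path in $X$ corresponds to a walk in $\widehat{\cal F}$ whose minimal inner rate bounds the length below, and a path realising the max-min in $\widehat{\cal F}$ bounds the distance above. You supply more detail — invoking the local model $dr^2+r^2d\theta^2+r^{2q}g$ and explicitly flagging the worry about paths that leave the fibre $\widehat F_t$ — but the resolution you cite (the $r$- and $\theta$-directions cost at least linear order while every rate is $\ge 1$) is precisely what the paper is implicitly relying on, so this is an elaboration of the same argument rather than a different one.
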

 \begin{proof} The proof is a straightforward extension of that of Lemma  \ref{lem:inner}.   Any path $p_t$ between $\delta_1(t)$ and  $\delta_2(t)$ corresponds to a path  from $(\nu)$ to $(\nu')$ in $\widehat{\cal F}$ which describes the sequence of $\widehat{F}_{t,  {\nu''}}$ and intermediate annuli crossed by $p_t$, and the length of $p_t$ is $\Theta(t^q)$ where $q$ is the minimal inner rate among vertices which are on the path. This implies $q_i(\delta_1, \delta_2) \leq q_{\nu, \nu'}$.  Now, choose a path from $(\nu)$  to $(\nu')$ in $\widehat{\cal F}$ maximizing $q$, so with $q=q_{\nu, \nu'}$ and then choose a   path $p_t$ in $\widehat{F}_t$ from  $\delta_1(t)$   to  $\delta_2(t)$ realizing it.  This path has length  $ \Theta(t^{q_{\nu, \nu'}})$. Therefore $q_{\nu, \nu'} \leq  q_i(\delta_1,\delta_2)$.
 \end{proof}

We also have a natural graph-map respecting inner rates $\widehat{\cal E} \colon \widehat{\cal F} \to \cal G$. 

To summarize, we have constructed a commutative diagram with four graph-covers respecting inner rates:
 
$$\xymatrix{\widehat{\cal F}
  \ar[r]^{ \widehat{\cal E}} \ar[d]_{\widehat{\cal C}}& \cal G \ar[d]^{\cal C}  \\
  \cal F \ar[r]^{\cal E} & T_0}$$

\subsection{The graph $\cal G$ and  the resolution of $(X,0)$.} \label{subsec:cal G-resolution}

We use again the notations  $\rho_\ell$, $T$ and $\pi_0 \colon X_0 \to X$ in Remark  \ref{rem:notations} and Definitions \ref{def:node T} and \ref{def:node G resolution}.
% the minimal good  resolution of $X$ which factors through both the Nash modification and the blow-up of the maximal ideal.  Let $\ell \colon (X,0) \to (\C^2,0)$ be a generic projection.
 Consider the pull-back $X_{\ell}$ of $\rho_{\ell}$ and
  $\ell$ and let   $\alpha_{\ell} \colon X'_{\ell}  \to X_{\ell}$ be the minimal  good resolution of $X_{\ell}$. This induces a resolution $\pi_{\ell} \colon X'_{\ell} \to X$   which  factors through $X_0$ and  a projection  $\widetilde{\ell} \colon X'_{\ell} \to Z_{\ell}$. We will denote by $G$ the resolution graph of $\pi_{\ell} $. The situation is summarized in the following  commutative diagram 
 
    $$\xymatrix{   X'_{\ell}  \ar@/_1pc/[dd]_{\widetilde{\ell}} \ar[d]^{\alpha_{\ell}}  \ar[r]  \ar[rd]^{\pi_{\ell}} & X_0 \ar[d]^{\pi_0}  \\
    X_{\ell} \ar[r]   \ar[d]  & X \ar[d]^{\ell}  \\
    Z_{\ell}\ar[r]^{\rho_{\ell}} & \C^2    }$$

Let  $(i)$ be a vertex  of $T$ and $C_i$ the corresponding exceptional curve in $\rho_{\ell}^{-1}(0)$. The inverse image of $C_i$ by $\widetilde \ell$  is a union of $k_i$ exceptional curves $E_{i,j}$, $j=1 \ldots, k_i$ of the exceptional divisor of  $\pi_{\ell} $. So $N(C_i)$ lifts by  $\widetilde \ell$ to $k_i$ connected components $N(E_{i,j})$, $j=1 \ldots, k_i$ where $N(E_{i,j})$ is a disc neighborhood of $E_{i,j}$ in $X'_{\ell}$.  Therefore, the inverse image  by $\ell$  of $B_i= \rho_{\ell}(\cal N(C_i))$ consists of $k_i$ components of the geometric decomposition of $(X,0)$. This implies that the graph $G$ is a refinement of $\cal G$ in the sense that we have an inclusion $\cal I: V(\cal G) \to V(G)$ between the sets of vertices  and the graph $\cal G$ is obtained from $G$ by replacing some strings of valency $2$ vertices in  $G$ by edges. More precisely,  the vertices of $\cal G$ correspond to the irreducible  components $E_{\nu}$ of the exceptional divisor of $\pi_{\ell}$ such that $\widetilde{\ell}(E_{\nu})$ is a curve in $\rho_{\ell}^{-1}(0)$.  If $(\nu)$ is a vertex of $\cal G$, we will also denote $(\nu)$ the corresponding vertex $\cal I(\nu)$ in $G$ and $E_{\nu}$ the corresponding component of the exceptional divisor of $\pi_{\ell}$. For more details, see  Sections 13 and 14 in \cite{BNP}.

\begin{example}
In this example we will give the four graphs for the simple surface singularity $E_8$. 

We can assume that $E_8$ has equation $x^2+y^3+z^5=0$ and then the
projection $\ell(x,y,z)=(y,z)$ is generic, so the discriminant is
given by $y^3+z^5=0$. Below is the graph $G$ for $E_8$ on the left and the
resolution graph of the discriminant curve $\Delta$ discriminant on the right. In this example, the graphs $G_0$ and $G$ coincide.  In $G$ the dashed
arrows represent the strict transform of the polar, and the dotted
arrows represent the strict transform of a generic hyperplane
section. In the graph on the right the dashed arrow represents the
strict transform of the discriminant. The circled vertices of $G$ are the vertices corresponding to vertices of $\cal G$ by the injection $\cal I$, and they are weighted by the corresponding inner rates.

\begin{center}
 \begin{tikzpicture}

\draw[thin](0,0)--(1,0);
\draw[thin](0,0)--(-1,0);
\draw[thin](0,0)--(0,-1);
\draw[thin](1,0)--(2,0);
\draw[thin](2,0)--(3,0);
\draw[thin](-1,0)--(-2,0);
\draw[thin](-2,0)--(-3,0);
\draw[thin](-3,0)--(-4,0);
\draw[thin](0,-1)--(0,-2);

\draw[dashed,>-stealth,->](3,0)--+(.65,-.65);
\draw(3,0)circle(3.5pt);

\draw[dotted,>-stealth,->](-4,0)--+(-.65,-.65);
\draw(-4,0)circle(3.5pt);

\draw[fill=black](0,0)circle(2pt);
\draw(0,0)circle(3.5pt);
\node(a)at(0,.35){{$-2$}};

\draw[fill=black](1,0)circle(2pt);
\node(a)at(1,.35){{$-3$}};

\draw[fill=black](2,0)circle(2pt);
\node(a)at(2,.35){{$-2$}};

\draw[fill=black](3,0)circle(2pt);
\node(a)at(3,.35){{$-1$}};

\draw[fill=black](-1,0)circle(2pt);
\draw(-2,0)circle(3.5pt);
\draw(0,-2)circle(3.5pt);
\node(a)at(-1,.35){{$-2$}};

\draw[fill=black](-2,0)circle(2pt);
\node(a)at(-2,.35){{$-2$}};

\draw[fill=black](-3,0)circle(2pt);
\node(a)at(-3,.35){{$-2$}};

\draw[fill=black](-4,0)circle(2pt);
\node(a)at(-4,.35){{$-2$}};
\node(a)at(-4,-.4){{$\bf 1$}};
\node(a)at(0.2,-.4){{$\bf \frac{5}{3}$}};
\node(a)at(-2,-.4){{$\bf \frac{3}{2}$}};
\node(a)at(3,-.4){{$\bf \frac{10}{3}$}};
\node(a)at(0.3,-2){{$\bf 2$}};

\draw[fill=black](0,-1)circle(2pt);
\node(a)at(-.4,-1){{$-2$}};

\draw[fill=black](0,-2)circle(2pt);
\node(a)at(-.4,-2){{$-2$}};

\draw[thin](6,0)--(7,0);
\draw[thin](6,0)--(6,-1);
\draw[thin](6,-1)--(6,-2);

% \purple{\draw[thin](6,0)--(5.35,.65);
% \draw[dashed,>-stealth,->](5.35,.65)--+(-.85,-.33);
% \draw[fill=black](5.33,.66)circle(2pt);
% \node(a)at(5.5,.9){{$-1$}};
% \node(a)at(6,.35){{$-2$}};
% }

\draw[fill=black](6,0)circle(2pt);
\node(a)at(6,.35){{$-1$}};

\draw[dashed,>-stealth,->](6,0)--+(-.65,.65);

\draw[fill=black](7,0)circle(2pt);
\node(a)at(7,.35){{$-3$}};

\draw[fill=black](6,-1)circle(2pt);
\node(a)at(5.6,-1){{$-2$}};

\draw[fill=black](6,-2)circle(2pt);
\node(a)at(5.6,-2){{$-3$}};

\end{tikzpicture}
\end{center}

The following picture shows the 
 graphs $T$, $\cal F$, $\cal G$ and $\widehat{\cal F}$ of $E_8$. The
 graph-maps $\cal C$, $\cal E$, $\widehat{\cal C}$ and $\widehat{\cal
   E}$ respect the shape and colour of a vertex, i.e., $\widehat{\cal E}$ of a
 white vertex with thick boundary is the white vertex with thick
 boundary in $\cal G$ and so on. Notice that in this case the graphs
$T$, and $\cal G$ are equal, but $\cal F$ and $\widehat{\cal F}$ differs
from them and from each other. For another example, see  \ref{ex:Minimal Singularity}.  

\begin{center}
 \begin{tikzpicture}
 
\draw[thin](0,0)--(1,0);
\draw[thin](0,0)--(0,-1);
\draw[thin](0,0)--(-.7,.7);
\draw[thin](0,-1)--(0,-2);
\draw[thin](0,0)--(-1,0);
\draw[thin](0,0)--(0,1);
\draw[thin](0,0)--(.7,-.7);

\draw[fill=gray](0,0)circle(2pt);
\node(a)at(0.2,.35){{$\bf\frac{5}{3}$}};

\draw[fill=white](1,0)circle(2pt);
\node(a)at(1,.35){{$\bf 2$}};

\draw[fill=white](.7,-.7)circle(2pt);
\node(a)at(.7,-.3){{$\bf 2$}};

\draw[fill=lightgray](-.7,.7)circle(2pt);
\node(a)at(-1.1,.7){{$\bf\frac{10}{3}$}};

\draw[fill=lightgray](-1,0)circle(2pt);
\node(a)at(-1.3,0){{$\bf\frac{10}{3}$}};

\draw[fill=lightgray](0,1)circle(2pt);
\node(a)at(-.3,1){{$\bf\frac{10}{3}$}};

\draw[fill=white,thick](0,-1)circle(2pt);
\node(a)at(-.3,-1){{$\bf\frac{3}{2}$}};

\draw[fill=black](0,-2)circle(2pt);
\node(a)at(-.3,-2){{$\bf 1$}};

\node(a)at(-1.7,-.5){{$\widehat{\cal F}$}};

\draw[thin](4,0)--(5,0);
\draw[thin](4,0)--(4,-1);
\draw[thin](4,0)--(3.3,.7);
\draw[thin](4,-1)--(4,-2);

\draw[fill=gray](4,0)circle(2pt);
\node(a)at(4.1,.35){{$\bf\frac{5}{3}$}};

\draw[fill=white](5,0)circle(2pt);
\node(a)at(5,.35){{$\bf 2$}};

\draw[fill=lightgray](3.3,.7)circle(2pt);
\node(a)at(2.9,.7){{$\bf\frac{10}{3}$}};

\draw[fill=white,thick](4,-1)circle(2pt);
\node(a)at(3.7,-1){{$\bf\frac{3}{2}$}};

\draw[fill=black](4,-2)circle(2pt);
\node(a)at(3.7,-2){{$\bf 1$}};

\node(a)at(2.1,-.5){{$\cal G $}};

\draw[thin](0,-4)--(1,-4);
\draw[thin](0,-4)--(0,-5);
\draw[thin](0,-4)--(-.7,-3.3);
\draw[thin](0,-5)--(0,-6);
\draw[thin](0,-4)--(-1,-4);
\draw[thin](0,-4)--(0,-3);

\draw[fill=gray](0,-4)circle(2pt);
\node(a)at(0.2,-3.65){{$\bf\frac{5}{3}$}};

\draw[fill=white](1,-4)circle(2pt);
\node(a)at(1,-3.65){{$\bf 2$}};

\draw[fill=lightgray](-.7,-3.3)circle(2pt);
\node(a)at(-1.1,-3.3){{$\bf\frac{10}{3}$}};

\draw[fill=lightgray](-1,-4)circle(2pt);
\node(a)at(-1.3,-4){{$\bf\frac{10}{3}$}};

\draw[fill=lightgray](0,-3)circle(2pt);
\node(a)at(-.3,-3){{$\bf\frac{10}{3}$}};

\draw[fill=white,thick](0,-5)circle(2pt);
\node(a)at(-.3,-5){{$\bf\frac{3}{2}$}};

\draw[fill=black](0,-6)circle(2pt);
\node(a)at(-.3,-6){{$\bf 1$}};

\node(a)at(-1.7,-4.5){{$\cal F $}};

\draw[thin](4,-4)--(5,-4);
\draw[thin](4,-4)--(4,-5);
\draw[thin](4,-4)--(3.3,-3.3);
\draw[thin](4,-5)--(4,-6);

\draw[fill=gray](4,-4)circle(2pt);
\node(a)at(4.1,-3.65){{$\bf\frac{5}{3}$}};

\draw[fill=white](5,-4)circle(2pt);
\node(a)at(5,-3.65){{$\bf 2$}};

\draw[fill=black](4,-6)circle(2pt);
\node(a)at(3.7,-6){{$\bf 1$}};

\draw[fill=lightgray](3.3,-3.3)circle(2pt);
\node(a)at(3,-3.3){{$\bf\frac{10}{3}$}};

\draw[fill=white,thick](4,-5)circle(2pt);
\node(a)at(3.7,-5){{$\bf\frac{3}{2}$}};

\node(a)at(2.1,-4.5){{$T$}};

\end{tikzpicture}
\end{center}
\end{example}

\section{Vertically aligned arcs} \label{sec:vertically aligned}

\begin{definition}[(Vertical arc criterion)]\label{def:vertically aligned} Let $(X,0)\subset (\C^n,0)$ be a normal surface. 
Let $\ell \colon (X,0) \to (\C^2,0)$ be a generic projection.  We say {\it pair of vertically aligned real arcs}  for any pair of distinct real arcs $\delta_1,\delta_2$ on $(X,0)$  such that $\ell(\delta_1)=\ell(\delta_2)$. 
\end{definition}

We use again the notations introduced at the beginning of Section \ref{sec:test curve criterion}: let $\rho'_{\ell} \colon Y_{\ell} \to \C^2$ the minimal composition of blow-ups of points  which resolves the base points of the family of projections of generic polar curves  $(\ell(\Pi_{\cal D}))_{\cal D \in \Omega}$. We denote by $\Delta^*$ the strict transform of  the discriminant curve $\Delta$ of $\ell$ by $\rho'_{\ell}$.

 \begin{proposition}\label{liftingoftestarcs} A normal surface $(X,0)\subset (\C^n,0)$ is LNE if and only if for all  generic projections $\ell \colon (X,0) \to (\C^2,0)$, for all real arcs $(\delta,0) \subset (\C^2,0)$ such that $\Delta^* \cap \delta^* = \emptyset$,  any pair of vertically aligned arcs $\delta_1,\delta_2$ in $\ell^{-1}(\delta)$ satisfies the the arc criterion, i.e., $q_i(\delta_1,\delta_2) = q_o(\delta_1,\delta_2)$. 
  \end{proposition}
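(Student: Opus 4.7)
The forward direction is immediate from Birbrair and Mendes' arc criterion (Theorem \ref{thm:arc criterion}): LNE forces $q_i=q_o$ on every pair of real arcs on $X$, and the vertically aligned pairs over real arcs $\delta$ with $\delta^*\cap\Delta^*=\emptyset$ form a subclass of those.

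For the converse, the plan is to run Theorem \ref{thm:arc criterion} in reverse, so it suffices to establish $q_i(\gamma_1,\gamma_2)=q_o(\gamma_1,\gamma_2)$ for an arbitrary pair of real arcs $\gamma_1,\gamma_2$ on $(X,0)$. I split the argument by the relative position of $\gamma_1,\gamma_2$ with respect to the polar curve and the fibers of $\ell$. Set $\delta_j:=\ell\circ\gamma_j$. When $\delta_1=\delta_2=\delta$ (vertically aligned), I subdivide: if $\delta^*\cap\Delta^*=\emptyset$ the hypothesis applies directly; if $\delta^*$ meets $\Delta^*$, then $\delta$ is absorbed in a $\Delta$-wedge $\cal N_{\Delta_0}$, hence both $\gamma_j$ lie in the polar wedge $A_{\Pi_0}(\alpha)$, which is a $D(q)$-piece by the Polar Wedge Lemma \ref{polar wedge lemma}; on such a piece both inner and outer distances between points in a single fiber of $\ell$ are controlled by the fiber's diameter, so $q_i(\gamma_1,\gamma_2)=q_o(\gamma_1,\gamma_2)$. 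When $\delta_1\neq\delta_2$ and both arcs remain outside a fixed polar wedge $A(\alpha)$, Lemma \ref{lem:local bil bound} provides a uniform bilipschitz bound for $\ell$ on the complement; combined with the trivial observation that $(\C^2,0)$ is LNE, this transfers $q_i(\delta_1,\delta_2)=q_o(\delta_1,\delta_2)$ in $\C^2$ to $q_i(\gamma_1,\gamma_2)=q_o(\gamma_1,\gamma_2)$ in $(X,0)$.

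The remaining and delicate subcase is when $\gamma_1,\gamma_2$ are not vertically aligned but at least one of them enters a polar wedge. My plan is to approximate the offending arc by a real arc on the corresponding polar component $\Pi_0$, which by Theorem \ref{generic projection bilipschitz} is outer-bilipschitz equivalent to its discriminant image $\Delta_0$; the inner geometry of the polar wedge then supplies the missing comparison through the fiber-graph $\widehat{\cal F}$ and Lemma \ref{lem:inner X}, after which a triangle-inequality argument in big-Theta reduces the situation to the previous cases. The principal obstacle is precisely this mixed case: one must simultaneously control how inner geodesics cross between polar wedges and the bilipschitz region and ensure that the big-Theta combinatorics of rates along paths in $\widehat{\cal F}$ (Section \ref{sec: inner contact on a normal surface}) matches the outer estimates read off from the projection $\ell$.
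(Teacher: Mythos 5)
The forward direction is fine. The converse has several genuine gaps.

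First, you are not exploiting the key flexibility of the hypothesis: it quantifies over \emph{all} generic projections $\ell$, and the paper's proof begins by choosing $\ell$ (depending on the given pair of arcs) so that the strict transforms of the two arcs by the Nash modification avoid $\Pi^*$. This ensures both arcs stay outside a polar wedge $A(\alpha)$ for suitable $\alpha$, and it is exactly what prevents the ``bad'' cases from occurring at the start. Your case split by ``vertically aligned or not'' for a fixed $\ell$ bakes in the wrong quantifier structure, because a pair of arcs that is vertically aligned for one $\ell$ will typically not be so for another, so you cannot switch $\ell$ mid-case without losing the vertical alignment your hypothesis requires.

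Second, your sub-case ``$\delta^*$ meets $\Delta^*$'' is incorrect as stated. From $\delta^*\cap\Delta^*\neq\emptyset$ it does \emph{not} follow that every lift of $\delta$ lies in the polar wedge $A_{\Pi_0}(\alpha)$: the polar wedge is only one of the components of $\ell^{-1}(\cal N_{\Delta_0})$, namely the one containing the branch $\Pi_0$ of the polar curve. The other lifts live in different components and the Polar Wedge Lemma says nothing about them. Moreover, even for points inside a $D(q)$-piece, knowing it is a $D(q)$-piece only controls the \emph{inner} metric; the assertion that ``inner and outer distances between points in a single fiber are controlled by the fiber's diameter, so $q_i=q_o$'' is precisely the LNE property you are trying to prove and cannot be assumed from the geometric decomposition.

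Third, in the non-vertically-aligned case you say the argument ``transfers $q_i=q_o$ in $\C^2$ to $q_i=q_o$ in $X$'' via the bilipschitz bound, but the hypothesis you are given is about vertically aligned pairs only, so something must convert the non-aligned pair into an aligned one. The paper does this by lifting the segment $S_t$ from $\sigma_1(t)$ to $\sigma_2(t)$ through $\ell$ with origin $\delta_1(t)$, producing an auxiliary arc $\delta'_2$ that \emph{is} vertically aligned with $\delta_2$; then a sequence of triangle inequalities bounded via the local bilipschitz constant $K_0$ yields the claim. Your version of this step omits the auxiliary arc, so there is nothing for the hypothesis to bite on.

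Finally, the remaining mixed case (where the lifted path $P_t$ meets the polar wedge) is only sketched as a ``plan,'' and what is sketched is not what is actually needed. The paper handles two sub-cases here: when $P_t$ only grazes the polar wedge it replaces the segment $S_t$ by a path $S'_t$ of comparable length going around the projected wedge; when the auxiliary arc $\delta'_2$ truly enters the polar wedge for all $\alpha>0$, it introduces a further arc $\widetilde{\sigma}_2$ inside $\ell(A_0(\alpha'))$ whose strict transform avoids $\Delta^*$, and then estimates using the polar rate $s$ of $A_0(\alpha)$ to show that the contribution of the wedge is of order $\Theta(t^s)$, which does not disturb the conclusion. Approximating by arcs on $\Pi_0$ and invoking Theorem~\ref{generic projection bilipschitz} on $\Pi_0$ does not give you these estimates; $\Pi_0$ is where $\ell$ fails to be a local diffeomorphism, so the quantitative comparison has to be done inside the wedge itself via the polar rate, not near $\Pi_0$.
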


\begin{proof}
   If $(X,0)$ is normally embedded, then the vertical  arc criterion is a consequence of the arc criterion of Theorem \ref{thm:arc criterion}. 

Conversely, assume that any pair of vertically aligned real arcs by a generic projection satisfies the arc criterion. We have to prove that any pair of real arcs also satisfies the arc criterion.

Let $\delta_1$ and $\delta_2$ be a pair of real arcs on $X$. If their tangent semi-lines are distinct, then  $q_i(\delta_1,\delta_2) = q_o(\delta_1,\delta_2)=1$ and the arc criterion is satisfied. We now assume that $\delta_1$ and $\delta_2$ have the same tangent semi-line. 

We choose a generic projection $\ell \colon (X,0) \to (\C^2,0)$  with polar curve $\Pi$ such that  $ \delta^*_i \cap \Pi^* = \emptyset$ for $i=1,2$, where $^*$ means strict transform by the Nash modification $\mathscr N $.  Then we can choose $\alpha >0$ such that the polar wedge $A(\alpha)$  intersects the germs  $(\delta_1,0)$ and $(\delta_2,0)$ only at $0$.  Consider the two real arcs $\sigma_i = \ell \circ \delta_i \colon [0,\eta)  \to \C^2$. Let $P_t \colon [0,1] \to X$ be the path on $X$ defined as  the lifting by $\ell$ with origin $\delta_1(t)$ of the segment  $S_t$ joining $\sigma_1(t)$ and $\sigma_2(t)$.  Denote by  $ \delta'_2 (t)$ the extremity of the path $P_t$. Then  $(\delta_2, \delta'_2)$ is a pair of vertically aligned  real arcs on $(X,0)$. By hypothesis,  it satisfies the arc criterion so we have $ q_i(\delta'_2,\delta_2) = q_o(\delta'_2,\delta_2)$.

 Assume first that the path $P_t$ does not intersect
 $A(\alpha)$. By Lemma \ref{lem:local bil bound}, we can choose a real $K_0 \geq 1$ such that the local bilipschitz constant of $\ell$ is bounded by $K_0$ on  $B_{\epsilon} \cap (X \setminus A(\alpha))$. 
 
  We then
 have $$\norm{\delta_1(t)- \delta'_2(t)} \leq d_i(\delta_1(t), \delta'_2(t)) \leq \hbox{length}(P_t) \leq K_0\, \hbox{length}(S_t) \leq K_0 \norm{\delta_1(t)- \delta_2(t)}.$$

Therefore, 
  $$\norm{\delta_2(t)- \delta'_2(t)} \leq  \norm{\delta_2(t)- \delta_1(t)} +  \norm{\delta_1(t)- \delta'_2(t)} \leq (1+K_0) \norm{\delta_1(t)- \delta_2(t)}\,.$$
We then obtain 
  $$\norm{ \delta_1(t)- \delta_2(t)} \leq d_i(\delta_1(t),\delta'_2(t)) +  \norm{ \delta'_2(t)- \delta_2(t)} \leq (1+2K_0) \norm{ \delta_1(t)- \delta_2(t)},$$
  which implies 
  $$ q_o(\delta_1,\delta_2) = \min(q_i(\delta_1,\delta'_2), q_o(\delta_2,\delta'_2)).$$

On the other hand, we have $d_i(\delta_1(t), \delta_2(t)) \leq d_i(\delta_1(t),\delta'_2(t)) + d_i(\delta'_2(t), \delta_2(t))$,  therefore
$$\min(q_i(\delta_1,\delta'_2), q_i(\delta_2,\delta'_2)) \leq  q_i(\delta_1,\delta_2).$$
Since $ q_i(\delta'_2,\delta_2) = q_o(\delta'_2,\delta_2)$, we then obtain
$$ q_o(\delta_1,\delta_2)  = \min(q_i(\delta_1,\delta'_2), q_o(\delta_2,\delta'_2)) \leq q_i(\delta_1,\delta_2),$$
which implies $q_o(\delta_1,\delta_2) =q_i(\delta_1,\delta_2)$ since  $q_i(\delta_1,\delta_2) \leq q_o(\delta_1,\delta_2)$. Therefore, the pair $\delta_1,\delta_2$ satisfies the arc criterion for normal embedding. 

Assume now that the path $P_t$ intersects the polar-wedge $A(\alpha)$. 

 Let us treat first the case for $\alpha>0$ sufficiently small. Let $A'(\alpha)$ denote the union of components of $A(\alpha)$ which intersect $P_t$. {If $t$ is sufficiently small,} we have $\delta_1(t) \not\in A'(\alpha)$ by choice of $\ell$,  so $\sigma_1(t) \not\in {\ell(A'(\alpha))}$  and also   $\delta'_2(t) \not\in A'(\alpha)$, so $\sigma_2(t) \not\in  \ell(A'(\alpha))$.  Then we can replace the segment $S_t$ by a path $S_t$ from $\sigma_1(t)$ to $\sigma_2(t)$ such that $\hbox{length}(S'_t) \leq \pi \hbox{length} (S_t)$ and such that the lifting of $S'_t$ by $\ell$ with extremities $\delta_1(t)$ and $\delta'_2(t)$ does not intersect $A(\alpha)$. Then the previous inequalities are modified by a factor of $\pi$ and we obtain:
   $$\norm{ \delta_1(t)- \delta_2(t)}\leq d_i(\delta_1(t),\delta'_2(t)) + \norm{\delta'_2(t)- \delta_2(t)} \leq (1+2K_0 \pi )\norm{ \delta_1(t)- \delta_2(t)},$$
    which leads to $q_o(\delta_1,\delta_2) = q_i(\delta_1,\delta_2)$ by the same arguments as before. 
    
Let us now assume that for all $\alpha >0$, $\delta'_2 \cap A(\alpha) \neq \{0\}$.  Let $A_0(\alpha)$ be the component of $A(\alpha)$ containing $\delta'_2$.  Let $\alpha' >\alpha$ and let $\widetilde{\sigma}_2$ be a real arc  inside  $\ell(A_0(\alpha')) $ such that for all $t$,  $\widetilde{\sigma}_2(t) \in S_t$ and  the strict transform of  $\widetilde{\sigma}_2(t)$ by $\rho'_{\ell}$ does not intersect $\Delta^*$. Then decreasing   $\alpha$ if necessary, we can assume   $\widetilde{\sigma}_2(t) \not\in \cal \ell(A_0(\alpha))$. 
    Consider the path $\beta'_t$ defined as the lifting by $\ell$ with origin $\delta_2(t)$ of the segment $\beta_t = [\widetilde{\sigma}_2(t), \sigma_2(t)]$. Let   $ \widetilde{\delta}_2(t)$ be the extremity of $\beta'_t$ and let $\widetilde{\delta}_2'(t)$   be the point of $P_t$ such that  $\ell(\widetilde{\delta}_2'(t)) = \widetilde{\sigma}_2(t)$. Then $\widetilde{\delta}_2'(t)$ and $\widetilde{\delta}_2(t)$ are vertically aligned. Since both  $\widetilde{\delta}_2'(t)$ and $\widetilde{\delta}_2(t)$ are outside $A(\alpha)$, we can apply what we just proved and we obtain $q_i(\delta_1, \widetilde{\delta}_2) = q_o(\delta_1, \widetilde{\delta}_2)$. Call $q$ this number and let $s$ the polar rate of    $A_0(\alpha)$ (Definition \ref{def:polar rate}). 
  
 Since $\delta_1$ and $\delta_2$ have the same tangent semi-line, then $\sigma_2$ and $\widetilde{\sigma}_2$ also have  the same tangent semi-line $L$. Therefore, we have  $d_o(\sigma_2(t),\widetilde{\sigma}_2(t) ) =  \hbox{length}(\beta_t)= \Theta(t^s)$.

We then obtain: 
    $$d_i(\widetilde{\delta}_2, \delta_2)  \leq  \hbox{length}(\beta'_t) \leq K_0\,   \hbox{length}(\beta_t)   =   \Theta(t^s),$$
which implies: 
    $$d_i(\delta_1, \delta_2)  \leq  d_i(\delta_1,\widetilde{\delta}_2)  + d_i( \widetilde{\delta}_2, \delta_2 )  =    \Theta(t^q) + \Theta(t^s).$$ 
    On the other hand, we have 
    $$ \Theta(t^q) + \Theta(t^s) =d_o(\sigma_1, \sigma_2) \leq d_o(\delta_1, \delta_2).  $$
    Since  $d_o(\delta_1, \delta_2) \leq d_i(\delta_1, \delta_2)$, we obtain $d_o(\delta_1, \delta_2) =  d_i(\delta_1, \delta_2) = \Theta(t^q) + \Theta(t^s)$, and then  $q_i(\delta_1, \delta_2)  = q_o(\delta_1, \delta_2)$.
    \end{proof}
\begin{remark}\label{rem:one projection} We proved Proposition \ref{liftingoftestarcs} because it is what we need to prove Proposition \ref{prop:characterization of normal embedding2}. But with a little more work one can adapt the proof  to get a criterion using just one fixed  generic projection $\ell \colon (X,0) \to (\C^2,0)$, if one consider all real arcs $\delta$ of $(\C^2,0)$, even those such that $\Delta^* \cap \delta^* \neq \emptyset$.
It is worth noting that the statement of Proposition \ref{liftingoftestarcs} can also be easily improved by reducing the criterion to pairs of vertically aligned real arcs corresponding to three fixed  generic projections $\ell_1, \ell_2, \ell_3 \colon (X,0) \to (\C^2,0)$. Indeed, if $\ell_1$ is chosen, it suffices to chose $\ell_2$ and $\ell_3$ so that $(\ell_1(\Pi_2))^* \cap \Delta_1^* = \emptyset$, $(\ell_1(\Pi_3))^* \cap \Delta_1^* = \emptyset$ and $(\ell_1(\Pi_2))^* \cap (\ell_1(\Pi_3))^* = \emptyset$ where $^*$ means strict transform by $\rho'_{\ell_1}$  and where for $i=1,2,3$,  $\Pi_i$ is the polar curve of $\ell_i$ and $\Delta_i = \ell_i(\Pi_i)$. Then, for any pair of real arcs $\delta_1$ and $\delta_2$ on $(X,0)$, at least one of $\ell_1$, $\ell_2$ or $\ell_3 $ satisfies $\Delta_i^*  \cap \ell_i(\delta_1)^* = \emptyset$ and $\Delta_i^*  \cap \ell_i(\delta_2)^* = \emptyset$.  
\end{remark}

\section{Partner pairs} \label{sec:partner pairs}

Let $(X,0)$ be a normal surface germ and let $\ell \colon (X,0) \to (\C^2,0)$ be a generic projection.  Let $(\delta,0)$ and $(\delta',0)$ be two real arcs in $(\C^2,0)$ which  have parametrizations of the form $\delta(t)  = (t,y(t))$ and $\delta'(t)  = (t,y'(t))$ in suitable coordinates and which meet the discriminant curve $\Delta$ of $\ell$ only at $0$. Let  $\delta_1, \delta_2$ be a pair of components of the lifting $\ell^{-1}(\delta)$. Let   $S_t$ be the segment in $F_t$ joining $\delta(t)$ and $\delta'(t)$. Let  $P_{1,t}$ and $P_{2,t}$ be the liftings of  $S_t$   by  the cover $\ell {\mid}_{\widehat{F}_t} \colon \widehat{F}_t \to {F}_t$ with origins respectively $\delta_1(t)$ and $\delta_2(t)$.  Denote their extremities by  $\delta'_1(t)$ and $\delta'_2(t)$. This defines a pair $\delta'_1, \delta'_2$ of distinct components of $\ell^{-1}(\delta')$.

\begin{definition} \label{def:partner pairs}
We say that $\delta'_1, \delta'_2$ is the {\it partner pair} of $\delta_1, \delta_2$ over $\delta'$.
\end{definition}

\begin{lemma} \label{lem:partner} Assume that   $\delta_1, \delta_2$ has a partner pair $\delta'_1, \delta'_2$ such that $q_i(\delta'_1, \delta'_2) < q_i(\delta, \delta')$.  If the pair  $\delta_1, \delta_2$  satisfies  the arc criterion $q_i(\delta_1, \delta_2)=q_o(\delta_1, \delta_2)$, then  $\delta'_1, \delta'_2$ also satisfies it.
\end{lemma}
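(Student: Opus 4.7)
The plan is to work entirely with the outer triangle inequality in $\C^2$ and in $(X,0)$, together with the fact that the segment $S_t$ between $\delta(t)$ and $\delta'(t)$ in $F_t$ has length $\Theta(t^{q})$, where $q := q_i(\delta,\delta') = q_o(\delta,\delta')$ (equality since $\C^2$ is LNE). Set $q' := q_i(\delta_1',\delta_2')$, so $q' < q$ by hypothesis. The goal will then be to compute $q_i(\delta_1,\delta_2)$ from the partner data, invoke the arc criterion assumed for $(\delta_1,\delta_2)$ to transfer the equality of rates to outer distances, and finally move that information back across to $(\delta_1',\delta_2')$ by one last triangle inequality.

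The first step is to bound $\text{length}(P_{i,t})$ for $i=1,2$. By Lemma \ref{lem:local bil bound} the local bilipschitz constant of $\ell$ is bounded, by some $K_0$, outside any fixed polar wedge $A(\alpha)$. If $S_t$ happens to pass through the associated $\Delta$-wedge $\ell(A(\alpha))$, I replace $S_t$ by a path $S_t'$ with the same endpoints and $\text{length}(S_t') \le \pi\,\text{length}(S_t)$ whose $\ell$-lifts avoid $A(\alpha)$, exactly as in the last paragraph of the proof of Proposition \ref{liftingoftestarcs}. Hence, up to enlarging $K_0$, I get $\text{length}(P_{i,t}) \le K_0\,\text{length}(S_t) = O(t^q)$, and in particular $\|\delta_i(t)-\delta_i'(t)\| \le d_i(\delta_i,\delta_i')(t) = O(t^q)$. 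This rerouting argument is the main technical point; everything else is triangle inequalities.

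Once the $O(t^q)$ bound on the side-paths is in hand, I compute $q_i(\delta_1,\delta_2)$ by a two-sided triangle inequality. From
$$d_i(\delta_1,\delta_2)(t) \le d_i(\delta_1,\delta_1')(t) + d_i(\delta_1',\delta_2')(t) + d_i(\delta_2',\delta_2)(t) = O(t^q) + \Theta(t^{q'}) = \Theta(t^{q'}),$$
and symmetrically $d_i(\delta_1',\delta_2')(t) \le O(t^q) + d_i(\delta_1,\delta_2)(t)$, which forces $d_i(\delta_1,\delta_2)(t) \ge \Theta(t^{q'})$ since $q'<q$. Thus $q_i(\delta_1,\delta_2)=q'$. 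Invoking the hypothesis that $(\delta_1,\delta_2)$ satisfies the arc criterion, I deduce $q_o(\delta_1,\delta_2) = q'$, i.e., $\|\delta_1(t)-\delta_2(t)\| = \Theta(t^{q'})$.

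Finally I transfer to the partner pair via the outer triangle inequality. The upper bound $d_o(\delta_1',\delta_2')(t) \le d_i(\delta_1',\delta_2')(t) = \Theta(t^{q'})$ is automatic, so gives $q_o(\delta_1',\delta_2') \ge q'$. For the lower bound,
$$\|\delta_1'(t) - \delta_2'(t)\| \ge \|\delta_1(t)-\delta_2(t)\| - \|\delta_1(t)-\delta_1'(t)\| - \|\delta_2(t)-\delta_2'(t)\| = \Theta(t^{q'}) - O(t^q) = \Theta(t^{q'}),$$
again using $q' < q$. Combining, $q_o(\delta_1',\delta_2') = q' = q_i(\delta_1',\delta_2')$, which is the desired arc criterion. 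The only delicate point is the polar-wedge detour in step one; the rest is just the fact that $O(t^q)$ is negligible compared to $\Theta(t^{q'})$ whenever $q'<q$.
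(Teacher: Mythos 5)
Your proof is correct and follows essentially the same route as the paper's, namely a chain of triangle inequalities along the lifted paths $P_{k,t}$, exploiting $q'<q$ so that the $\Theta(t^{q})$ side-paths are negligible. You additionally spell out the $O(t^{q})$ bound on $\mathrm{length}(P_{k,t})$ via Lemma \ref{lem:local bil bound} and a polar-wedge detour (which the paper's proof uses without comment) and you isolate $q_o(\delta_1,\delta_2)=q'$ as an explicit intermediate step before transferring back to the partner pair, a minor organizational tidying of the same argument.
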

 \begin{proof} 
  We have  $d_i(\delta'_1(t),\delta'_2(t)) \leq length (P_{1,t}) + d_i(\delta_1(t),\delta_2(t)) +  length (P_{2,t})$.  
  
  Since  $q_i(\delta'_1, \delta'_2)  < q_i(\delta, \delta')$ and since  for $k=1,2$, we have $length (P_{k,t}) = \Theta(t^{q_i(\delta, \delta')})$,  we then obtain $q_i(\delta'_1,\delta'_2) \geq  q_i(\delta_1, \delta_2)$. 
  
  We also have  $d_i(\delta_1(t),\delta_2(t)) \leq length (P_{1,t}) + d_i(\delta'_1(t),\delta'_2(t)) +  length (P_{2,t})$, which leads to $q_i(\delta'_1,\delta'_2) \leq  q_i(\delta_1, \delta_2)$ with the same argument.  Therefore  $q_i(\delta'_1,\delta'_2) =  q_i(\delta_1, \delta_2)$. 

 The inequality $d_o(\delta_1(t),\delta_2(t)) \leq  length (P_{1,t}) + d_o(\delta'_1(t),\delta'_2(t)) +  length (P_{2,t})$ gives $q_o(\delta_1,\delta_2) \geq q_o(\delta'_1,\delta'_2)$ and  
 $d_o(\delta'_1(t),\delta'_2(t)) \leq  length (P_{1,t}) + d_o(\delta_1(t),\delta_2(t)) +  length (P_{2,t})$  leads to $q_o(\delta'_1,\delta'_2) \geq q_o(\delta_1,\delta_2)$. Therefore  $q_o(\delta'_1,\delta'_2) =  q_o (\delta_1, \delta_2)$. 
 
 Since $q_o (\delta_1, \delta_2)=q_i (\delta_1, \delta_2)$, then  $q_o (\delta'_1, \delta'_2)=q_i (\delta'_1, \delta'_2)$ as desired. 
 \end{proof}

 \begin{lemma} \label{lem:partner2}     Let $\rho \colon Y \to \C^2$ be a sequence of blow-ups of points which
resolves the base points of the family of projected polar curves
$(\ell(\Pi_{\cal D}))_{\cal D\in \Omega}$  and let  $T_0$ be the resolution graph of $\rho$.  Let $\delta$  a  real arc in $(\C^2,0)$ whose strict transform intersects $\rho^{-1}(0)$ at a smooth point $p$  and  let $\delta_1, \delta_2$ be a pair of vertically aligned arcs over $\delta$.  Let  $\delta'$ be another real arc in $(\C^2,0)$ and let $\delta'_1, \delta'_2$ be the partner pair of $\delta_1, \delta_2$ over $\delta'$.  {Let $\Delta$ be the discriminant curve of $\ell$.} We assume that none of $\delta^*$ and ${\delta'}^*$ intersects $\Delta^*$ and that the pair  $\delta_1, \delta_2$  satisfies  the arc criterion. 
 \begin{enumerate}
\item  \label{partner1}  If  ${\delta'}^*$ passes through $p$,  then  $\delta'_1, \delta'_2$  also satisfies the arc criterion;
\item  \label{partner2} Let {$C_j$ be the component of $\rho^{-1}(0)$ such that $p \in C_{j}$} and let $(\nu_1)$ and $(\nu_2)$ be the two vertices of $\widehat{\cal F}$  such that $\delta_1(t) {\in} \widehat{F}_{t,\nu_1}$ and $\delta_2(t) {\in} \widehat{F}_{t,\nu_2}$, {so we have $(\cal E \circ \widehat{\cal C})(\nu_1)=(\cal E \circ \widehat{\cal C})(\nu_2)=(j)$}.  Let $(\nu_0)$ be a vertex of $\widehat{\cal F}$ on a path from $(\nu_1)$ and $(\nu_2)$ such that $q_{\nu_1,\nu_2} = q_{\nu_0}$. Assume $q_{\nu_0} < q_{j} $  and set ${(j_0)} = ( \cal E \circ \widehat{\cal C})(\nu_0)$. Let $T_0' \subset T_0$ be the connected component of $T_0 \setminus (j_0)$ which contains $(j)$ and set $E'=\bigcup_{k \in V(T_0')}C_k$. If ${\delta'}^* \cap E'\neq \emptyset$, then $\delta'_1, \delta'_2$  also satisfies the arc criterion.
\end{enumerate}
\end{lemma}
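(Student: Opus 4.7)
Both parts will be deduced from Lemma~\ref{lem:partner}, by verifying (with a direct argument in a boundary case of Part~(1)) the hypothesis $q_i(\delta'_1,\delta'_2)<q_i(\delta,\delta')$. The common core is a length estimate: since $\delta^*\cap\Delta^*={\delta'}^*\cap\Delta^*=\emptyset$, for $t$ small the segment $S_t=[\delta(t),\delta'(t)]$ and its $\ell$-lifts $P_{1,t},P_{2,t}$ stay outside a polar wedge, and Lemma~\ref{lem:local bil bound} gives $\mathrm{length}(P_{k,t})=\Theta(t^{q_i(\delta,\delta')})$ for $k=1,2$. The triangle inequality for $d_i$ then yields the symmetric bounds
\[
q_i(\delta'_1,\delta'_2)\ge\min(q_i(\delta,\delta'),q_i(\delta_1,\delta_2)),\qquad q_i(\delta_1,\delta_2)\ge\min(q_i(\delta,\delta'),q_i(\delta'_1,\delta'_2)),
\]
from which it follows that if $q_i(\delta_1,\delta_2)<q_i(\delta,\delta')$ then $q_i(\delta'_1,\delta'_2)=q_i(\delta_1,\delta_2)<q_i(\delta,\delta')$, and Lemma~\ref{lem:partner} concludes. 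So it suffices to establish this strict inequality.

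\textbf{Part~(2).} Lemma~\ref{lem:inner X} together with the rate-preservation under $\cal E\circ\widehat{\cal C}$ gives $q_i(\delta_1,\delta_2)=q_{\nu_0}=q_{j_0}$. For $q_i(\delta,\delta')$, choose $(k)\in V(T_0')$ with ${\delta'}^*\cap C_k\neq\emptyset$; Lemma~\ref{lem:inner} expresses $q_i(\delta,\delta')$ as the max--min rate over paths in $\cal F$ joining a vertex over $(j)$ to one over $(k)$. Any such path projects to a walk in $T_0$ containing the unique tree-path from $(j)$ to $(k)$, which lies entirely in $T_0'$ because $T_0'$ is a connected component of $T_0\setminus(j_0)$ containing both endpoints. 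By the strict monotonicity of rates along root-to-leaf paths (Proposition~\ref{lem:increasing rates}) and the hypothesis $q_j>q_{j_0}$, the component $T_0'$ is a descendant-side subtree of $(j_0)$, hence every vertex of $T_0'$ has rate strictly greater than $q_{j_0}$. Therefore $q_i(\delta,\delta')>q_{j_0}=q_i(\delta_1,\delta_2)$ and the plan concludes.

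\textbf{Part~(1).} Since $\delta^*$ and ${\delta'}^*$ both pass through the smooth point $p\in C_j$, the points $\delta(t)$ and $\delta'(t)$ lie in the same fiber component $F_{t,\mu_j}$ of $\cal F$, giving $q_i(\delta,\delta')=q_j$ by Lemma~\ref{lem:inner}. Since $\nu_1,\nu_2$ are both over $(j)$ and have rate $q_j$, any path in $\widehat{\cal F}$ joining them has minimum rate at most $q_j$, so $q_i(\delta_1,\delta_2)\leq q_j$. If the inequality is strict, the plan above concludes. In the boundary case $q_i(\delta_1,\delta_2)=q_j$, observe that $|\delta(t)-\delta'(t)|=\Theta(t^{q_i(\delta,\delta')})=\Theta(t^{q_j})$ fits within the diameter of $F_{t,\mu_j}$, so for $t$ small $S_t\subset F_{t,\mu_j}$ and each lift $P_{k,t}$ stays in $\widehat F_{t,\nu_k}$; Lemma~\ref{lem:inner X} then gives $q_i(\delta'_1,\delta'_2)=q_{\nu_1,\nu_2}=q_j$, and the outer triangle inequality
\[
d_o(\delta'_1(t),\delta'_2(t))\leq d_o(\delta_1(t),\delta_2(t))+2\,\mathrm{length}(P_{k,t})=O(t^{q_j})
\]
(using the arc criterion for $(\delta_1,\delta_2)$) yields $q_o(\delta'_1,\delta'_2)\geq q_j$. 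Combined with $q_o\leq q_i=q_j$ this gives the arc criterion for $(\delta'_1,\delta'_2)$.

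\textbf{Main obstacle.} The delicate points are (i) in Part~(2), the use of Proposition~\ref{lem:increasing rates} to show that $T_0'$ sits strictly above $(j_0)$ with all rates exceeding $q_{j_0}$; and (ii) in Part~(1), identifying the boundary equality case $q_i(\delta_1,\delta_2)=q_i(\delta,\delta')$ which escapes Lemma~\ref{lem:partner} and requires the component-preservation of the lifts (which in turn relies on containment of $S_t$ in $F_{t,\mu_j}$ for $t$ small).
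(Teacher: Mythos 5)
Your plan --- reduce both parts to the hypothesis $q_i(\delta'_1,\delta'_2)<q_i(\delta,\delta')$ of Lemma~\ref{lem:partner} --- is the paper's approach, but both parts contain errors.

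In Part (1), you invoke Lemma~\ref{lem:inner} to conclude $q_i(\delta,\delta')=q_j$. That lemma requires $\delta^*$ and ${\delta'}^*$ to meet $\rho^{-1}(0)$ at two \emph{distinct} smooth points; here they meet the \emph{same} point $p$, so the lemma does not apply, and in fact $q_i(\delta,\delta')>q_j$ strictly (one must blow up at $p$ to separate the strict transforms, landing on a component of rate $>q_j$; this is exactly what the paper uses). With the correct strict inequality, $q_i(\delta_1,\delta_2)\le q_j<q_i(\delta,\delta')$ always holds, your plan concludes immediately, and the ``boundary case'' you treat never occurs. Independently, the boundary analysis itself is shaky: $S_t\subset F_{t,\mu_j}$ does not force $P_{k,t}\subset\widehat F_{t,\nu_k}$, since $\ell$ is branched over the discriminant, which may meet $F_{t,\mu_j}$.

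In Part (2), you assert that $q_j>q_{j_0}$ together with Proposition~\ref{lem:increasing rates} implies $T_0'$ is a descendant-side subtree of $(j_0)$. That inference is false in general: $(j)$ and $(j_0)$ can lie on distinct branches of the rooted tree $T_0$ with $q_j>q_{j_0}$, in which case $T_0'$ contains the root and has vertices of rate $\le q_{j_0}$, breaking the final step. The fact that $(j_0)$ really is an ancestor of $(j)$ comes from the hypothesis that $(\nu_0)$ is the bottleneck of an optimal $\widehat{\cal F}$-path between $(\nu_1)$ and $(\nu_2)$: that path projects to a closed walk based at $(j)$ in $T_0$ all of whose vertices have rate $\ge q_{j_0}$, and the walk must contain the tree-path from $(j)$ to $(j_0)$, so the nearest common ancestor $(a)$ of $(j)$ and $(j_0)$ satisfies $q_a\ge q_{j_0}$; on the other hand $q_a\le\min(q_j,q_{j_0})=q_{j_0}$ by Proposition~\ref{lem:increasing rates}, so $q_a=q_{j_0}$, forcing $(a)=(j_0)$. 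The paper's own proof is terse on this point (it states $q_{j_0}<q_{j'}$ without elaboration), but the justification you propose does not capture the argument.
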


 \begin{proof}   \ref{partner1}  Let {$C_j$ be the component of $\rho^{-1}(0)$ such that $p \in C_{j}$}. Since $\delta$ and $\delta'$ both pass through $p$, then $q_i(\delta, \delta')>q_{j}$. By Lemma \ref{lem:inner X}, we have $q_i(\delta'_1, \delta'_2) =  q_{j}$. Therefore $ q_i(\delta, \delta')> q_i(\delta'_1, \delta'_2)  $ and we then get   \ref{partner1}  by applying Lemma \ref{lem:partner}. 
 
\ref{partner2} By composing $\rho$ with an additional sequence of
blow-ups of points, we can assume that ${\delta'}^*$ intersects
$\rho^{-1}(0)$ at a smooth point. Let {$C_{j'}$} be the component of
$E'$ such that ${\delta'}^* \cap C_{j'} \neq
\emptyset$. Then we have $q_{j_0}<  q_{j'}$. By assumption we also have
$q_{j_0}<  q_{j}$.  Therefore, by Lemma \ref{lem:inner}, we get
$q_i(\delta, \delta') =  q_{j, j'}  > q_{j_0}$ since $(j)$ and
$(j')$ are in the same connected component of $T_0\setminus
(j_0)$. On the other hand,  by Lemma \ref{lem:inner
      X},  $q_i(\delta_1,\delta_2)  = q_{\nu_1, \nu_2}  =q_{\nu_0} {=q_{j_0}}$. We
  then have  $q_i(\delta, \delta') > q_i(\delta_1,\delta_2)$ and we conclude again by applying Lemma \ref{lem:partner}.
\end{proof} 

\section{LNE  along strings} \label{sec:strings}
Let $\ell \colon (X,0) \to (\C^2,0)$ be a generic projection and let
$\rho \colon Y \to \C^2$ be a sequence of blow-ups of points which
resolves the base points of the family of projected polar curves
$(\ell(\Pi_{\cal D}))_{\cal D\in \Omega}$.  Let $E'$ be the union of
components of $ \rho^{-1}(0)$ which are not $\Delta$-curves. Let 
$(\delta,0) \subset (\C^2,0)$ be  a real arc such that $\delta^* \cap E' \neq \emptyset$  and such that $\delta^*$ intersects $\rho^{-1}(0)$ at a smooth point.

\begin{lemma} \label{LNE along strings}  Let $C$ be the component of $ \rho^{-1}(0)$ such that
$C \cap \delta^* \neq \emptyset$ and let $q_C$ be its inner rate.   Let  $\delta_1$ and $\delta_2$ be two real arc components of
$\ell^{-1}(\delta)$ and consider the two points $p_1 = \delta_1^*\cap {\mathscr N }^{-1}(0)$ and  $p_2 = \delta_2^*\cap {\mathscr N }^{-1}(0)$ {where $^*$ means strict transform by the Nash modification $\mathscr N$}. Assume that $q_i(\delta_1,\delta_2) = q_C$.   

Then the pair of arcs  $(\delta_1,\delta_2)$ satisfies the  arc
criterion $q_i(\delta_1,\delta_2) = q_o(\delta_1,\delta_2)$  if and
only if $ \widetilde{\lambda}(p_1) \neq  \widetilde{\lambda}(p_2)$,
where $ \widetilde{\lambda}$ denotes the lifted Gauss map (Definition \ref{def:Gauss}).
\end{lemma}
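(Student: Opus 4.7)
The plan is to compare $\|\delta_1(t) - \delta_2(t)\|$ in $\C^n$ against the inner contact $t^{q_C}$ by analyzing the limiting tangent planes $H_i := \widetilde{\lambda}(p_i)$ at the Nash lifts $p_i$. First I would exploit the hypothesis that $C$ lies in $E'$: since $C$ is not a $\Delta$-curve, $\delta^*$ avoids $\Delta^*$ and hence $\delta_1, \delta_2$ avoid the polar $\Pi$, so by Lemma \ref{lem:local bil bound} the local bilipschitz constant of $\ell$ is bounded by some $K_0$ along each $\delta_i$. In particular, each restriction $\ell|_{H_i}$ is a linear isomorphism and I may set $A_i := (\ell|_{H_i})^{-1} \colon \C^2 \to H_i$.

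I would next establish a first-order expansion. Differentiating $\ell \circ \delta_i = \delta$ gives $\delta_i'(s) = (\ell|_{T_{\delta_i(s)}X})^{-1}\delta'(s)$; since $T_{\delta_i(s)}X \to H_i$ as $s \to 0$, integration from $0$ to $t$ yields
\[
\delta_i(t) = A_i \delta(t) + r_i(t), \qquad \|r_i(t)\| = o(t),
\]
and subtracting produces
\[
\delta_1(t) - \delta_2(t) = (A_1 - A_2)\delta(t) + \bigl(r_1(t) - r_2(t)\bigr).
\]
The operator $L := A_1 - A_2$ has image in $\cal D = \ker \ell$, and it vanishes identically if and only if $H_1 = H_2$. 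Because $\widetilde{\lambda}$ is injective on the exceptional fiber $\mathscr N^{-1}(0)$, this coincides with the condition $p_1 = p_2$, i.e.\ $\widetilde{\lambda}(p_1) = \widetilde{\lambda}(p_2)$.

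With this split in hand, the argument divides into two cases. If $\widetilde{\lambda}(p_1) \neq \widetilde{\lambda}(p_2)$, then $L \neq 0$; the assumption $q_i(\delta_1,\delta_2) = q_C$ together with $q_o \geq q_i$ forces the leading exponent of $L\delta(t)$ to be at least $q_C$, while the universal inequality $d_o \leq d_i$ forces it to be exactly $q_C$, so $L\delta(t) = \Theta(t^{q_C})$ dominates the remainder and $q_o = q_C = q_i$. Conversely, if $\widetilde{\lambda}(p_1) = \widetilde{\lambda}(p_2)$, the linear term vanishes and $\|\delta_1(t) - \delta_2(t)\| = \|r_1(t) - r_2(t)\|$; I would then sharpen the remainder estimate using the geometric decomposition, because the component of $\ell^{-1}(\rho(\cal N(C)))$ through which $\delta_i$ passes is a $B(q_C)$-piece having a single limit tangent plane $H$ at $0$, approached at a rate fast enough to force $\|r_i(t)\| = o(t^{q_C})$, whence $q_o > q_i$ and the arc criterion fails.

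The main obstacle is the sharpening of the remainder bound from $\|r_i(t)\| = o(t)$ to $\|r_i(t)\| = o(t^{q_C})$ in the case $p_1 = p_2$. This relies on a careful use of the bilipschitz model of a $B(q_C)$-piece (Definition \ref{def:Bq}) to show that the Gauss map $\widetilde{\lambda}$ converges to $H$ along $\delta_i$ at the predicted rate in $s$. A related subtlety, in the case $p_1 \neq p_2$, is to justify that the leading order of $L\delta(t)$ really matches $q_C$; this uses the inner-rate hypothesis via Lemma \ref{lem:inner X} to constrain the tangential alignment of $\delta$ with $\ker L$.
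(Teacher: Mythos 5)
Your proposal identifies the correct dichotomy --- distinguish $\widetilde{\lambda}(p_1) = \widetilde{\lambda}(p_2)$ from $\widetilde{\lambda}(p_1) \neq \widetilde{\lambda}(p_2)$ by comparing the linear sections $A_i := (\ell|_{H_i})^{-1}$ --- but the linear-approximation-in-$t$ framework cannot deliver the lemma, and you have honestly flagged this yourself without closing the gap. The core issue is that the remainder bound $\|r_i(t)\| = o(t)$ from integrating the derivative relation is hopelessly too weak: you need a bound at the scale $t^{q_C}$, and nothing in the definition of a $B(q_C)$-piece (Definition \ref{def:Bq}) supplies a quantitative rate of convergence of the Gauss map along $\delta_i$ that would yield $\|r_i(t)\| = o(t^{q_C})$. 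Moreover, and more seriously, the leading term $L\delta(t)$ in the case $H_1\neq H_2$ is in fact automatically $o(t)$: by Whitney's Lemma the common tangent line of the two branches lies in both $H_1$ and $H_2$, hence the tangent direction of $\delta$ lies in $\ker(A_1-A_2)$, so the linear contribution is already killed at first order and you are comparing two remainders of unknown size. So the ``forced'' identification $L\delta(t) = \Theta(t^{q_C})$ has no justification; the cited appeal to Lemma \ref{lem:inner X} for ``tangential alignment of $\delta$ with $\ker L$'' cannot do this work, because that lemma computes inner contacts in $X$, not alignments in $\C^2$. A side error: $\widetilde{\lambda}$ need not be injective on $\mathscr{N}^{-1}(0)$; the lemma's condition is on the values $\widetilde{\lambda}(p_i)$, not on the points $p_i$, and the paper's statement is careful to distinguish these.

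The paper's proof sidesteps all of this by not expanding in $t$ at all. After reducing (via Lemma \ref{lem:partner2}) to the case where $\delta$ is a real slice of a curve $\gamma$ with curvette strict transform, it builds a one-parameter family inside the real slice $F_t$: for fixed $t$ it studies $g(t,s) = \|\ell'_1(t,\alpha(t)+as) - \ell'_2(t,\alpha(t)+as)\|$ as $s$ moves transversely within the $B$-piece's fiber, Taylor-expands in $s$ about $s_0$, and identifies the \emph{first} Taylor coefficient $a_1(t)$ as the distance between the tangent lines $L_t^{(1)}$ and $L_t^{(2)}$. The decisive step is comparing $g(t,s_0)$ with $g(t,t^{q'})$ for $q' > q_C$, where Lemma \ref{lem:inner X} bounds $g(t,t^{q'})$ by $\Theta(t^{q'}) = o(t^{q_C})$; this is what kills the higher-order Taylor terms and isolates $a_1(t)$, giving $\|\delta_1(t)-\delta_2(t)\|/d_i(\delta_1(t),\delta_2(t)) = \Theta(a_1(t))$. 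From there the conclusion follows from whether $a_1(t) \to 0$, which via Whitney's Lemma is exactly the condition $\widetilde{\lambda}(p_1) = \widetilde{\lambda}(p_2)$. To repair your proof you would need to replace the radial linear approximation with this transverse comparison argument.
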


\begin{proof}  By {\ref{partner1}} of Lemma \ref{lem:partner2}, it suffices to prove the result when $\delta$ is the real slice of a complex curve $\gamma$  whose strict transform by $\rho_{\ell}$ is a  curvette of  $C$. 
We may assume that our coordinates are  $(x,y)$ and that $\delta$ is  parametrized by $\delta(t) = (t,y(t))$. By Lemma \ref{lem:inner X}, the assumption $q_i(\delta_1,\delta_2) = q_C$  is equivalent to asking that in the fiber-graph $\widehat{\cal F}$, the two vertices $(\nu_1)$ and $(\nu_2)$ such that ${\delta_i(t) \in} \widehat{F}_{t,\nu_i}$ are joined by a path along which inner rates are $\geq q_{C}$.

Set $q=q_C$. Then  $\gamma$ has a Puiseux expansion of the form
  $$ y=\sum_{i=1}^k a_i x^{p_i} +  a x^q (1+ x^{q''} b(x)),$$  with $a, a_i \in \C^*$, 
  $1 \leq p_1 < p_2 < \cdots < p_k <q$, $1<q''$ and where the higher
  order terms $a x^{q+q''}b(x) \in \C\{x^{1/n}\}$ contain only non
  essential exponents. Let $q_1$ and $q_2$ be two rational numbers
  such that $q_1<q<q_2$ and such that any branch of the discriminant
  curve $\Delta$ of $\ell$ with Puiseux expansion of the form
  $ y=\sum_{i=1}^k a_i x^{p_i} + h.o.$, where $h.o.$ means higher
  order terms, satisfies the following property: the first exponent
  $>p_k$ is not inside the interval $[q_1, q_2]$.  Set
  $\alpha(x)=\sum_{i=1}^k a_i x^{p_i}$. This is equivalent to asking that
  the strict transforms by $\rho_{\ell}$ of the curves with Puiseux
  expansion $ y=\alpha(x) + x^{q'}$ with $q' \in [q_1,q_2]$ intersects
  $\rho_{\ell}^{-1}(0)$ along a union of curves
  $C_{u_1} \cup \ldots \cup C_{u_r} $ in $E'$ which corresponds to a
  string $(u_1)-(u_2)- \cdots -(u_r)$ in the resolution tree of
  $\rho_{\ell}$. Then $\delta$ is contained in the real
  $3$-dimensional semialgebraic germ $\cal A$ with boundary defined
  by:
$$\cal A  = \{(t,y) \in \R^+ \times \C \colon   t^{q_2} \leq |y- \alpha(t)| \leq   t^{q_1}  \}\,.$$
If $n$ is the multiplicity of $\gamma$ then $\cal A$ is the union of $n$ real $3$-dimensional semialgebraic germs with boundary which are pairwise disjoint outside $0$. We write $(A,0)$ for the  one containing $\delta$.

Let $m$ be the multiplicity of $(X,0)$. Then  $\ell^{-1}(A)$ consists
of $m$ semialgebraic germs $(A^{(i)},0)$, $i=1,\ldots,m$, which are
pairwise disjoint outside $0$. For each $i$,  the restriction $\ell
\colon A^{(i)} \to A$ is an inner bilipschitz homeomorphism  by the Polar Wedge Lemma  \ref{polar wedge lemma}.  Let $\ell'_i \colon (t,y) \mapsto  \ell'_i(t,y) \in A^{(i)}$ be  the inverse map   of $\ell \mid_{A^{(i)}}$.
Assume $\delta_1 \subset A^{(1)}$ and $\delta_2 \subset A^{(2)}$.

 For  $s \in \C$ such that $ t^{q_2} \leq |s| \leq   t^{q_1} $, consider the function $g \colon A \to (0, + \infty)$ defined by
$$g(t,s)=  \norm{\ell'_1(t,\alpha(t)+ a s) - \ell'_2(t,\alpha(t)+  a s
)}.$$
 Set $s_0=t^{q}(1+t^{q''}b(t))$. We have in particular: $$ g(t, s_0)= \norm{\delta_1(t) - \delta_2(t)}$$

Let us give an estimate of  $g(t, s_0) $.  Fix a small
$t \in \R^+$ and let us write the Taylor formula for
$s \mapsto g(t,s)$ at the point $s_0$. We get  for $s \in \C$
  such that $ t^{q_2} \leq |s| \leq t^{q_1} $:
$$g(t, {s_0})- g(t,s) = ({s_0}-s) a_1(t) + ({s_0}-s)^2 a_2(t) +  ({s_0}-s)^3 a_3(t) + \cdots,$$
where $a_1(t)$ equals the distance between the lines
$L^{(1)}_t = T_{ {\delta_1(t)}} X \cap \{x=t, y \in \R\}$ and
$L^{(2)}_t = T_{ {\delta_2(t)}} \cap \{x=t, y \in  {\R}\}$ in the
Grassmanian $\grassman(1, \R^{2n})$.

Set $s=t^{q'}$ where $q<q'\leq q_2$.  Then $s_0-t^{q'} = t^{q}(1+t^{q''}b(t)) - t^{q'} = \Theta(t^{q})$. We then have: 
$$  g(t,{s_0}) -   g(t,t^{q'}) = { \Theta(t^q)} a_1(t) + { \Theta(t^{2q})}  a_2(t) +  { \Theta(t^{3q})}  a_3(t) + \ldots, \hskip1cm (1)$$

 By Lemma \ref{lem:inner X}, we have $  q_i(\delta_1 , \delta_2)    = q$  and
 $$d_i(\ell'_1(t,\alpha(t)+at^{q'}), \ell'_2(t,\alpha(t)+at^{q'})) = \Theta(t^{q'})\,.$$
  
The latter equality implies: 
\begin{align*}  
 g(t,t^{q'}) &=  \norm{\ell'_1(t,\alpha(t)+a t^{q'}) -
   \ell'_2(t,\alpha(t)+ a t^{q'}}\\ &\leq
 d_i(\ell'_1(t,\alpha(t)+at^{q'}), \ell'_2(t,\alpha(t)+at^{q'})) =
 \Theta(t^{q'})
\end{align*}  
 
  Then dividing the equality (1) by
  $d_i(\delta_1(t) - \delta_2(t)) = \Theta(t^q)$, we get:
$$  \frac{\norm{\delta_1(t) - \delta_2(t)}}{ d_i(\delta_1(t) - \delta_2(t))} =  \Theta(a_1(t)).$$

Set $P_1 = \widetilde{\lambda}(p_1)$   and $P_2 = \widetilde{\lambda}(p_2)$.   As $t$ tends to zero, $a_1(t)$ tends to the distance between the two
real lines $L_1 = P_1 \cap \{x=t, y \in \R\}$ and
$L_2 = P_2 \cap \{x=t, y \in \R\}$.  If $P_1 = P_2$, we then have
$L_1 = L_2$ so $\lim_{t \to 0} a_1(t) = 0$ and we get
$q_i(\delta_1, \delta_2) > q_0(\delta_1, \delta_2)$.

Since the components of $\ell^{-1}(A)$ which contain $A^{(1)}$ and
$A^{(2)}$ are tangent to the same line $L$, then by Whitney's Lemma
\cite[Theorem 22.1]{W}, we have $L \subset P_1$ and $L \subset
P_2$. Therefore, if $P_1 \neq P_2$, then we also have $L_1 \neq L_2$,
which means that $\lim_{t \to 0} a_1(t) \neq 0$. In that case, we then
have $q_i(\delta_1, \delta_2) = q_0(\delta_1, \delta_2)$.
 \end{proof}

\section{Proof of Proposition \ref{prop:characterization of normal embedding2}} \label{sec:proof}

\begin{definition}  A {\it $\cal P$-node}  of  the fiber-graph $\widehat{\cal F}$ is a vertex $(\nu)$ such that  the image of $\widehat{\cal E}(\nu)$  by the injection $\cal I \colon V(\cal G) \to V(G)$  is a $\cal P$-node of the resolution graph $G$ of $\pi_{\ell}$ (see Section \ref{subsec:cal G-resolution}). 
 Equivalently,  $(\nu)$ is a $\cal P$-node of $\widehat{\cal F}$ if and only if  for all  $t \in (0,\eta)$, ${\Pi} \cap  \widehat{F}_{\nu, t}\neq \emptyset$, where $\Pi$ denotes the polar curve of a generic projection of $(X,0)$.  
 \end{definition} 
 
 The proof will use the relation between the $\cal P$-nodes of
 $\widehat{\cal F}$ and the lifted Gauss map. Let us explain this first.
 
 Let $(\nu)$ be a vertex of  $\widehat {\cal F}$  and let $\delta$ be a real arc {on $(X,0)$} such that for all  $t \in (0,\eta)$, $\delta(t) \in   \widehat{F}_{\nu, t}$. If $(\nu)$ is not a $\cal P$-node, then the intersection point of the strict transform $\delta^*$ of $\delta$ by the Nash modification of $(X,0)$ is a point $p$ which does not depend on the choice of $\delta$. We set  $\widetilde{\lambda}(\nu):=  \widetilde{\lambda}(p)$, where $\widetilde{\lambda}$ is the lifted Gauss map.
 
 Let now $(\nu_1)$ and $(\nu_2)$ be two vertices of $\widehat {\cal F}$ which are in the same connected component of $\widehat {\cal F}$ minus its $\cal P$-nodes and let $\delta_1$ and $\delta_2$ be two real arcs  such that   for all  $t \in (0,\eta)$, $\delta_1(t) \in  \widehat{F}_{\nu_1, t}$ and $\delta_2(t) \in  \widehat{F}_{\nu_2, t}$. Then the strict transforms  of $\delta_1$ and $\delta_2$ by the Nash modification $\mathscr N$ of $(X,0)$ intersect the exceptional divisor at the same point $p$, and we then have $\widetilde{\lambda}(\nu_1) = \widetilde{\lambda}(\nu_2)$. 
   
 \begin{proof}   The ``only if'' direction of Theorem \ref{prop:characterization of normal
  embedding2} is a direct consequence of Theorem \ref{thm:arc  criterion}. 
  
  Let us prove the ``if'' direction. We assume that for all generic projections $\ell \colon (X,0) \to (\C^2,0)$, any pair of   components $\delta_1,\delta_2$ over any test arc $\delta$ of $\ell$  satisfies  the arc criterion $q_i(\delta_1,\delta_2) = q_o(\delta_1,\delta_2)$.  

  Let $\delta \colon [0,\eta) \to \C^2$ be a real arc such that $\Delta^* \cap \delta^* = \emptyset$ where $^*$ means strict transform by $\rho_{\ell}$, and let $\delta_1$ and $\delta_2$ be two components of $\ell^{-1}(\delta)$.  We assume that $\delta$ is not a test arc. By Proposition \ref{liftingoftestarcs}, we have to prove that  $q_i(\delta_1,\delta_2) = q_o(\delta_1,\delta_2)$. 

 Assume  first that $\delta^*$ intersects  the component {$C_{j}$} of $(\rho_{\ell})^{-1}(0)$ at a smooth point $p$ of $(\rho_{\ell})^{-1}(0)$. Let $\delta'$ be a test arc passing through $p$ {and let $\delta'_1,\delta'_2$ be the partner pair of $\delta_1,\delta_2$ over $\delta'$}. Since $\delta'$ is a test arc, the pair $\delta'_1,\delta'_2$  satisfies  the arc criterion, then applying \ref{partner1} of Lemma \ref{lem:partner2},   we obtain that the pair $\delta_1,\delta_2$ also satisfies it. 
 
 Assume now that $\delta^*$ intersects $(\rho_{\ell})^{-1}(0)$ at an intersection point {$p = C_{j'} \cap C_{j''}$}.   Let us compose $\rho_{\ell}$ with an additional sequence of blow-ups of points $\alpha$ so that {${\delta}^*$} intersects $(\rho_{\ell} \circ \alpha)^{-1}(0)$ at a smooth point and let $C_{j}$ be the component of $(\rho_{\ell} \circ \alpha)^{-1}(0)$ which intersects $\delta^*$. In the tree $T$, this replaces the edge between $(j')$ and $(j'')$ by   a string $\cal S = (j') - \cdots - (j ) - \cdots - (j'')$.   We assume $q_{j'}< q_{j''}$.   Let $\delta'$ and $\delta''$
 be  test arcs at $(j')$ resp.\ $(j'')$. Let $\delta'_1, \delta'_2$  be the partner pair of $\delta_1,\delta_2$ over $\delta'$, and the same for $\delta_1'',\delta_2''$ over $\delta''$. For $i=1,2$, let $(\nu_i)$, $(\nu'_i)$ and $(\nu''_i)$ be the vertices of $\widehat{\cal F}$ such that $\delta_i(t) \in \widehat{F}_{t,\nu_i}$,  $\delta'_i(t) \in \widehat{F}_{t,\nu'_i}$ and $\delta''_i(t) \in \widehat{F}_{t,\nu''_i}$. 
 
The string $\cal S$ lifts to two strings $\cal S_1$ and $\cal S_2$ in $\widehat{\cal F}$ with extremities respectively $(\nu'_1)$ and $(\nu''_1)$ and $(\nu'_2)$ and $(\nu''_2)$.  
\vskip0,2cm\noindent
{\bf Case 1.} Assume that $(\nu''_1)$ and $ (\nu''_2)$ can be joined by a path $p$ along which inner rates are $\geq q_{j''}$. We then have $q_{\nu''_1, \nu''_2} = q_{j''}$ and also  $q_{\nu'_1, \nu'_2} = q_{j'}$ since one obtains a simple path from $(\nu'_1)$ to $(\nu'_2)$ by appending $\cal S_1$ and $\cal S_2$ to $p$. Since there are no adjacent $\Delta$-nodes in the tree $T$, one of $(j')$ or $(j'')$, say $(j'')$,  is not a $\Delta$-node (the arguments will be the same when $(j'')$ is a $\Delta$-node and not $(j')$). Since $q_{\nu''_1, \nu''_2} = q_{j''}$ and since the pair  $(\delta''_1, \delta''_2)$ satisfies the arc criterion, then  Lemma \ref{LNE along strings}  implies that   $\widetilde{\lambda}(\nu''_1) \neq \widetilde{\lambda}(\nu''_2)$. Since the vertices $(\nu_i)$ and $(\nu''_i)$ are in the same connected component of  $\widehat{\cal F}$ minus its $\cal P$-nodes, then we have  $\widetilde{\lambda}(\nu'_1) = \widetilde{\lambda}(\nu_1)$ and $\widetilde{\lambda}(\nu'_2) = \widetilde{\lambda}(\nu_2)$. Therefore, $\widetilde{\lambda}(\nu_1) \neq  \widetilde{\lambda}(\nu_2)$. Applying again Lemma \ref{LNE along strings} in the converse direction, we obtain that  $\delta_1, \delta_2$ satisfies the arc criterion.
 
 \vskip0,2cm\noindent
{\bf Case 2.} Assume that $(\nu''_1)$ and $(\nu''_2)$  cannot be joined by a path $p$ along which inner rates are $\geq q_{j''}$. Let $(\nu_0)$ be a vertex of $\widehat{\cal F}$ on a simple path from $(\nu'_1)$ to $(\nu'_2)$ such that  $q_{\nu'_1, \nu'_2}  = q_{\nu_0}$ and set $(j_0) = (\cal E \circ \widehat{\cal C})(\nu_0)$.  Since any   path from $(\nu''_1)$ to $(\nu''_2)$ is obtained by appending the strings $\cal S_1$ and $\cal S_2$ to a path from $(\nu_1)$ to $(\nu_2)$, we have $q_{\nu''_1,\nu''_2} = q_{\nu'_1, \nu'_2} = q_{j_0} < j''$. Then we can apply \ref{partner2} of Lemma \ref{lem:partner2} to the vertex $(j'')$:   since the pair $(\delta''_1, \delta''_2)$ satisfies the arc criterion then the pair $(\delta_1, \delta_2)$ also satisfies it. 
  \end{proof}

\section{Enhanced Proposition  \ref{prop:characterization of normal embedding2}}

\begin{definition}  \label{def:nodal-principal} Let $\ell \colon (X,0) \colon (\C^2,0)$ be a generic projection and let $\delta$ be a real test arc for $\ell$. A component $\widehat{\delta}$ of $\ell^{-1}(\delta)$ is \emph{principal} if it is a real slice of a principal component (Definition \ref{def:G'}) of $\ell^{-1}(\gamma)$. A {\it nodal test arc} is a test arc $\delta$ which is a real slice of a nodal test curve (Definition \ref{def:test curve}). 
  \end{definition}
 
  \begin{proposition}[(Enhanced Proposition \ref{prop:characterization of normal embedding2})]  \label{cor:characterization of normal embedding} A normal surface  $(X,0)$  is LNE if and only if  for all  generic projections $\ell \colon (X,0) \to (\C^2,0)$ and for all nodal test arcs  $\delta$ for $\ell$, any pair of   principal components $\delta_1,\delta_2$ of $\ell^{-1}(\delta)$ satisfies  the arc criterion $q_i(\delta_1,\delta_2) = q_o(\delta_1,\delta_2)$.  
\end{proposition}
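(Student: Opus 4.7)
The ``only if'' direction is immediate from Proposition \ref{prop:characterization of normal embedding2}, since nodal test arcs are test arcs and principal pairs are pairs of components. For the converse, I assume that every pair of principal components over every nodal test arc satisfies the arc criterion; the goal is to verify the arc criterion for arbitrary pairs over arbitrary test arcs, after which Proposition \ref{prop:characterization of normal embedding2} yields LNE. The strategy is a two-stage reduction: first from arbitrary pairs to principal pairs over a fixed nodal test arc, then from an arbitrary test arc to a nodal one.

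\emph{First reduction (principal).}  Fix a nodal test arc $\delta$ at a node $(i)\in V(T)$ and a pair $\delta_1,\delta_2$ of components of $\ell^{-1}(\delta)$. A component $\delta_k$ is non-principal precisely when its position $(\nu_k)$ in the fiber-graph $\widehat{\cal F}$ corresponds, via $\cal I\circ \widehat{\cal E}$, to a vertex in a branch of $G_0$ outside $V(G'_0)$---i.e., a subtree hanging off $V(G'_0)$ containing no $\cal L$- or $\cal P$-nodes. Since such a branch contains no $\cal P$-node, the discussion preceding the proof of Proposition \ref{prop:characterization of normal embedding2} shows the lifted Gauss map $\widetilde{\lambda}$ is constant along it. I then replace each non-principal $\delta_k$ with a \emph{principal shadow} $\delta_k^{\sharp}$---a principal component of $\ell^{-1}(\delta^{\sharp})$ for a suitable nodal test arc $\delta^{\sharp}$---constructed so that $\widetilde{\lambda}(\delta_k^*)=\widetilde{\lambda}((\delta_k^{\sharp})^*)$ and $q_i(\delta_k,\delta_k^{\sharp})>q_i(\delta_1,\delta_2)$. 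Combining Lemma \ref{lem:partner} (applied to the pair $(\delta_1,\delta_2)$ and its partner under the slide to $\delta^{\sharp}$) with Lemma \ref{LNE along strings} then yields that the arc criterion for $(\delta_1,\delta_2)$ is equivalent to the arc criterion for the principal pair $(\delta_1^{\sharp},\delta_2^{\sharp})$, and the latter holds by hypothesis.

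\emph{Second reduction (nodal).}  Let $\delta$ be a test arc at a non-node vertex $(i)$ of $T$. By Definition \ref{def:node T}, $(i)$ lies on a string of $T$ bounded by two nodes $(j_1),(j_2)$. Following the pattern of the proof of Proposition \ref{prop:characterization of normal embedding2}, I choose a nodal test arc $\delta'$ at an endpoint of this string and form the partner pair $\delta'_1,\delta'_2$ of $\delta_1,\delta_2$ over $\delta'$. The strict monotonicity of inner rates along a resolution path (Proposition \ref{lem:increasing rates}), together with Lemmas \ref{lem:inner X} and \ref{lem:partner2}---part (1) if $\delta'^*$ passes through $\delta^*\cap\rho_\ell^{-1}(0)$ and part (2) otherwise---allows me to transfer the arc criterion from $(\delta'_1,\delta'_2)$ (handled by the first reduction combined with the hypothesis) to $(\delta_1,\delta_2)$.

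The chief obstacle is the first reduction: one must rigorously construct the principal shadows by navigating the commutative diagram of graphs $T$, $\cal G$, $G_0$, $\widehat{\cal F}$, and verify that every non-principal branch of $G_0$ attaches to $V(G'_0)$ at a vertex corresponding via $\cal I$ and the covers $\widehat{\cal E}$, $\cal E$, $\widehat{\cal C}$, $\cal C$ to a node of $T$, so that a valid nodal test arc $\delta^{\sharp}$ exists at the attachment point and produces a principal shadow with the required inner-contact inequality $q_i(\delta_k,\delta_k^{\sharp})>q_i(\delta_1,\delta_2)$. The distinction between branches attached at an $\cal L$-node and branches attached at a $\cal P$-node will likely require separate case analysis, since the geometric role of these two node-types in the Nash modification and in the tangent-cone structure differs significantly.
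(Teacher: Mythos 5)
The ``only if'' direction and your second reduction (from non-nodal to nodal test arcs via partner pairs and Lemma~\ref{lem:partner2}) track the paper's argument. But your first reduction contains a genuine gap, and it is precisely the gap the paper's proof is designed to fill.

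The difficulty is in transferring the arc criterion from your ``principal shadow'' pair $(\delta_1^\sharp,\delta_2^\sharp)$ at the attachment vertex $(j_0)$ back to the original pair $(\delta_1,\delta_2)$. Lemma~\ref{lem:partner} requires a \emph{strict} inequality $q_i(\delta_1^\sharp,\delta_2^\sharp) < q_i(\delta,\delta^\sharp)$, where $q_i(\delta,\delta^\sharp)=q_{j_0}$. But the positions $(\nu_1^\sharp),(\nu_2^\sharp)$ of the shadow pair lie in $(\cal E\circ\widehat{\cal C})^{-1}(j_0)$ with inner rate exactly $q_{j_0}$, so the boundary case $q_i(\delta_1^\sharp,\delta_2^\sharp)=q_{j_0}$ can and does occur, and Lemma~\ref{lem:partner} gives nothing then. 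Lemma~\ref{LNE along strings} does not rescue you either: it requires $q_i(\delta_1,\delta_2)=q_C$ for the vertex $C$ where $\delta^*$ meets $\rho^{-1}(0)$, which here is $C_j$ with rate $q_j>q_{j_0}$; but since $(\nu_1)$ sits in a non-principal branch, every path from $(\nu_1)$ to $(\nu_2)$ must cross the attachment vertex of rate $q_{j_0}<q_j$, forcing $q_i(\delta_1,\delta_2)\le q_{j_0}<q_j$, so that lemma never applies to $(\delta_1,\delta_2)$.

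The paper covers this boundary case with a dedicated result, Lemma~\ref{lem:partner3}, whose proof is genuinely geometric and does not reduce to Gauss-map constancy. Its key steps are: the restriction $\ell\vert_{\widehat{S}_{t,1}}$ over the non-principal piece is a covering of a disc by a disc, hence an isomorphism (which incidentally shows $\delta_2$ cannot live in the same branch as $\delta_1$); the boundaries $\partial\widehat{S}_{t,1}$ and $\partial\widehat{S}_{t,2}$ are at outer distance $\Theta(t^q)$ by the hypothesis applied to test arcs on $\partial S_t$; one then passes to a \emph{second} generic projection $\ell'$ which is also generic for the complex curve $\ell^{-1}(\gamma)$ and invokes Theorem~\ref{generic projection bilipschitz} to conclude $q_o(\delta_1,\delta_2)=q$. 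None of these steps is present in, or subsumed by, the ``principal shadow'' construction. Your proposal correctly identifies the Gauss map as constant along a non-principal branch (no $\cal P$-nodes), but that observation alone cannot control the \emph{outer} contact in the case $q_i(\delta_1^\sharp,\delta_2^\sharp)=q_{j_0}$, which is where the real work lies.
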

 
 \begin{definition}  An {\it $\cal L$-node}  of  the fiber-graph $\widehat{\cal F}$ is a vertex $(\nu)$ such that  the image of $\widehat{\cal E}(\nu)$  by the injection $\cal I \colon V(\cal G) \to V(G)$  is a $\cal L$-node of the resolution graph $G$ of $\pi_{\ell}$ (see Section \ref{subsec:cal G-resolution}). 
 Equivalently,  $(\nu)$ is an $\cal L$-node of $\widehat{\cal F}$ if and only if   for all  $t \in (0,\eta)$, $(H \cap X) \cap \widehat{\cal F}_{\nu, t}\neq \emptyset$ where $H$ is a hyperplane section of $X$. 
  % (Notice that the image by the graph-map $\cal C \circ \widehat{\cal E} \colon \widehat{\cal F} \to T$ of an $\cal L$-node  (resp.\ a $\cal P$-node) of  $\widehat{\cal F}$ is the root vertex (resp.\ a $\Delta$-node) of $T$.)
 \end{definition} 
 
 The proof of Proposition \ref{cor:characterization of normal embedding} will use the following characterization of principal component in terms of fiber-graph and which follows immediately from the definitions. 
 
 \begin{lemma} \label{def:F'} Consider the subgraph $\widehat{\cal F}'$ of $\widehat{\cal F}$ defined as the union of all simple {paths} in $\widehat{\cal F}$ connecting  pairs of  vertices among $\cal L$- and $\cal P$-nodes. Let $\delta$ be a test arc. A component $\widehat{\delta}$ of $\ell^{-1}(\delta)$ is principal if and only if  the vertex $(\nu)$ of $\widehat{\cal F}$ such that $\delta(t) {\in} \widehat{F}_{\nu,t}$ is a vertex   of $\widehat{\cal F}'$. 
 \end{lemma}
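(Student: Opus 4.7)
The plan is to unpack Definition \ref{def:G'} of principal components and translate it through the chain of graph-maps $\widehat{\cal F} \xrightarrow{\widehat{\cal E}} \cal G \xrightarrow{\cal I} G$, using that $\pi_\ell$ factors through $\pi_0$ to relate $G$ and $G_0$.

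First, I would verify the correspondence of distinguished vertices. Since $\pi_\ell$ factors through $\pi_0$ by a sequence of blowups of smooth points, $G$ is obtained from $G_0$ by subdividing some edges by strings of valency-two vertices, so $V(G_0)$ sits naturally inside $V(G)$. The exceptional curves of the blowup of the maximal ideal and of the Nash modification already appear in $\pi_0$, so the $\cal L$- and $\cal P$-nodes of $G_0$ coincide, under this inclusion, with the analogous vertices of $G$; by the definitions preceding the statement, these correspond via $\cal I \circ \widehat{\cal E}$ to the $\cal L$/$\cal P$-nodes of $\widehat{\cal F}$. Moreover, no intermediate vertex of a subdivided edge (either in $G \setminus G_0$, or as a fiber of $\widehat{\cal E}$ over a vertex of $\cal G$) is an $\cal L$- or $\cal P$-node, since no new Nash or maximal-ideal components arise from extra blowups of smooth points or from the fiber decomposition.

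Next, I would locate the vertex $(\nu)$ associated to $\widehat\delta$. The slice $\widehat\delta$ is a real slice of a component $\widehat\gamma$ of $\ell^{-1}(\gamma)$, and $\widehat\gamma$ is contained near $0$ in a single $B$-piece of the geometric decomposition of $(X,0)$, namely the one corresponding under $\cal I$ to the exceptional curve $E_w \subset \pi_\ell^{-1}(0)$ whose curvette is the strict transform of $\widehat\gamma$ by $\pi_\ell$ (refining $\pi_\ell$ by further blowups of smooth points if necessary does not change the combinatorial conclusion). Thus $(\cal I \circ \widehat{\cal E})(\nu) = w$. Definition \ref{def:G'} then gives two sub-cases: either the strict transform of $\widehat\gamma$ by $\pi_0$ is a curvette of some $E_v$, in which case $w = v$ under $V(G_0) \hookrightarrow V(G)$; or it meets a node $E_v \cap E_{v'}$, in which case $w$ lies on the string of intermediate vertices of $G$ subdividing the edge $v$--$v'$ of $G_0$.

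With these identifications the equivalence is a direct comparison of simple paths. In the curvette case, $\widehat\gamma$ is principal iff $v \in V(G'_0)$, iff $w = v$ lies on a simple path in $G$ between two $\cal L$/$\cal P$-nodes (edge subdivision preserves simple paths between fixed endpoints), iff $(\nu) \in V(\widehat{\cal F}')$ by the graph-cover property of $\widehat{\cal E}$. In the node case, $\widehat\gamma$ is principal iff both $v, v' \in V(G'_0)$; simple $\cal L$/$\cal P$-$\cal L$/$\cal P$ paths in $G_0$ witnessing membership of $v$ and of $v'$ can be amalgamated across the edge $v$--$v'$ and subdivided in $G$ to yield a simple $\cal L$/$\cal P$-$\cal L$/$\cal P$ path through $w$, putting $(\nu)$ in $V(\widehat{\cal F}')$; conversely, a simple $\cal L$/$\cal P$-$\cal L$/$\cal P$ path in $\widehat{\cal F}$ through $(\nu)$ projects to a simple path in $G$ through $w$, whose contraction to $G_0$ must traverse both endpoints $v$ and $v'$ of the string containing $w$. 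The main obstacle is the bookkeeping in this node case: one must check that the amalgamated path in $G$ through $w$ remains simple rather than forming a cycle, which reduces to the tree structure of the resolution graph together with the observation above that the intermediate string-vertices introduced by the subdivisions $G_0 \rightsquigarrow G$ and $\cal G \rightsquigarrow \widehat{\cal F}$ are never $\cal L$- or $\cal P$-nodes and so cannot provide shortcuts between the $\cal L$/$\cal P$-endpoints being joined.
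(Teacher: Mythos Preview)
Your unpacking is essentially what the paper has in mind: it offers no proof at all, stating only that the lemma ``follows immediately from the definitions.'' Tracing the definitions through the graph-maps $\widehat{\cal F}\xrightarrow{\widehat{\cal E}}\cal G\xrightarrow{\cal I}G$ and the factorization of $\pi_\ell$ through $\pi_0$, as you do, is exactly how one makes that immediacy explicit, and the curvette/node case split from Definition~\ref{def:G'} is the right organization.

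There is, however, a genuine slip at the end. You justify the simplicity of the amalgamated path in the node case by appealing to ``the tree structure of the resolution graph,'' but dual resolution graphs of normal surface singularities are \emph{not} trees in general (cusp singularities, for instance, have cyclic dual graphs). What you actually need is the weaker graph-theoretic fact that if $v,v'\in V(G'_0)$ are adjacent in $G_0$ then the edge $v$--$v'$ itself lies in $G'_0$; equivalently, that there is a simple path between $\cal L$/$\cal P$-nodes passing through that edge. This can be extracted from the simple paths witnessing $v\in V(G'_0)$ and $v'\in V(G'_0)$ by a short case analysis (take the shortest initial segment from an $\cal L$/$\cal P$-node to $v$ avoiding $v'$, append the edge, then continue to the nearest $\cal L$/$\cal P$-node along the witness for $v'$, truncating at the first repetition), with no tree hypothesis required. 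Once that edge is in $G'_0$, its subdivision in $G$ lies in the corresponding subgraph of $G$, and the lift to $\widehat{\cal F}$ goes through as you describe.
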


 In the twin paper \cite{NPP2}, we prove that any minimal surface singularity is LNE by using Theorem \ref{cor:complex characterization of normal embedding}. In the following example, we illustrate the gain by using Proposition \ref{cor:characterization
  of normal embedding} instead of Proposition
\ref{prop:characterization of normal embedding2} on a specific minimal singularity. 

\begin{example}\label{ex:Minimal Singularity} Let $(X,0)$ be {a} minimal surface singularity with dual resolution
graph $\Gamma$ given in Example 1 of \cite{B1} and Example 5.6
of \cite{B2}. In the picture below we have to the left the dual
graph of the minimal resolution of $(X,0)$ that factors through both the
blow-up and the Nash modification, and on the right the resolution tree of the
discriminant. The arrows in the left graph indicates the strict
transform of the polar and on the right the strict transform of the
discriminant.
 \begin{center}
 \begin{tikzpicture}
 
\draw[thin](-3,0)--(-2,0);
\draw[thin](-2,0)--(-1,0);
\draw[thin](-1,0)--(0,0);
\draw[thin](0,0)--(1,0);
\draw[thin](1,0)--(2,0);
\draw[thin](2,0)--(3,0);
\draw[thin](0,0)--(0,-1);
\draw[thin](0,-1)--(0,-2);
\draw[thin](0,-2)--(0,-3);

\draw[dashed,>-stealth,->](-1,0)--+(-.65,-.65);
\draw[dashed,>-stealth,->](0,-1)--+(.65,.45);
\draw[dashed,>-stealth,->](0,-1)--+(.65,-.45);
\draw[dashed,>-stealth,->](2,0)--+(.65,-.65);
\draw[dashed,>-stealth,->](2,0)--+(-.65,-.65);
\draw[dashed,>-stealth,->](-3,0)--+(-.65,.6);
\draw[dashed,>-stealth,->](-3,0)--+(-1,.25);
\draw[dashed,>-stealth,->](-3,0)--+(-1,-.25);
\draw[dashed,>-stealth,->](-3,0)--+(-.65,-.6);

\draw[fill=black](0,0)circle(2pt);
\node(a)at(0,0.35){{$-4$}};

\draw[fill=black](-2,0)circle(2pt);
\node(a)at(-1.9,0.35){{$-3$}};

\draw[fill=black](0,-2)circle(2pt);
\node(a)at(-.5,-2){{$-2$}};

\draw[fill=black](-3,0)circle(2pt);
\node(a)at(-3,0.35){{$-4$}};

\draw[fill=black](1,0)circle(2pt);
\node(a)at(1,0.35){{$-3$}};

\draw[fill=black](3,0)circle(2pt);
\node(a)at(3,0.35){{$-2$}};

\draw[fill=black](-1,0)circle(2pt);
\node(a)at(-.9,0.35){{$-1$}};

\draw[fill=black](2,0)circle(2pt);
\node(a)at(2,0.35){{$-2$}};

\draw[fill=black](0,-3)circle(2pt);
\node(a)at(-.5,-3){{$-2$}};

\draw[fill=black](0,-1)circle(2pt);
\node(a)at(-.5,-1){{$-2$}};

\draw[thin](6,-2)--(6,-1);
\draw[thin](6,-1)--(5,-.5);
\draw[thin](6,-2)--(7,-1);
\draw[thin](6,-2)--(6,-1);
\draw[thin](6,-1)--(6,0);
\draw[thin](6,0)--(7,0);

\draw[dashed,>-stealth,->](6,0)--+(-.5,.4);
\draw[dashed,>-stealth,->](5,-.5)--+(-.5,.4);
\draw[dashed,>-stealth,->](5,-.5)--+(-.5,-.4);
\draw[dashed,>-stealth,->](7,-1)--+(.5,-.4);
\draw[dashed,>-stealth,->](7,-1)--+(.5,.4);
\draw[dashed,>-stealth,->](6,-2)--+(.75,.25);
\draw[dashed,>-stealth,->](6,-2)--+(.75,0);
\draw[dashed,>-stealth,->](6,-2)--+(.75,-.25);
\draw[dashed,>-stealth,->](6,-2)--+(.75,-.5);

\draw[fill=black](6,-2)circle(2pt);
\node(a)at(5.6,-2){{$-3$}};

\draw[fill=black](6,-1)circle(2pt);
\node(a)at(5.6,-1.2){{$-4$}};

\draw[fill=black](5,-.5)circle(2pt);
\node(a)at(5,-.15){{$-1$}};

\draw[fill=black](6,0)circle(2pt);
\node(a)at(6,.35){{$-1$}};

\draw[fill=black](7,0)circle(2pt);
\node(a)at(7,.35){{$-2$}};

\draw[fill=black](7,-1)circle(2pt);
\node(a)at(7,-.65){{$-1$}};

\end{tikzpicture}
\end{center}

There are infinitely many
minimal singularities with $\Gamma$ as its dual resolution graph, but
as is shown in \cite{NPP2} they all have the same bilipschitz
geometry. 
 The discriminant of $(X,0)$ has equation 
$(x^4+y^4)(x^2+y^5)(x+y^2+iy^3)(x+y^2-iy^3)(y^2+x^4)=0$ (Example 5.6
of \cite{B2}). Notice that the graph on the right above is the graph of
the resolution $\rho'_\ell$. To get the graph of $\rho_\ell$ we have
to blow up all edges between $\Delta$-nodes. In this case the only
edge between $\Delta$-nodes is the edge between the root vertex and
adjacent vertex with weight $-1$. Hence in the graph above we have
blown up this edge to create a separation-node.
In the following picture, the left graph is 
$\widehat{\cal F}$  and the right graph is $\cal F$. The graph-map
$\widehat{\cal C}$ preserves the shape and colour of a vertex, i.e.,
$\widehat{\cal C}$ of a white vertex with thick boundary in $\widehat{\cal F}$
is the white vertex with thick boundary in $\cal F$ and so
on. All the vertices of $\cal F$ but the white one with thin boundary
are nodes. The subgraph of $\widehat{\cal F}$ consisting of the
vertices connected by the thick edges is $\widehat{\cal F}'$. The
graph $\widehat{\cal F}$ has $36$ vertices while $\widehat{\cal F}'$
has only  $12$ vertices. Consider a test arc  $\delta$ at the central
vertex of ${\cal F}$ with inner rate $2$ (its strict transform is
represented by an arrow) on the picture. Its lifting by $\ell$ has $6$
components, whose strict transform are represented by the 6 arrows in
$\widehat{\cal F}$. Therefore, to prove LNE by using Proposition 
\ref{prop:characterization of normal embedding2},  we would have to
test the arc criterion on each of the 15 pairs of arcs among these
components. Now,  only $3$ of the three arrows are attached to
$\widehat{\cal F}'$, i.e.,   correspond to principal components of
$\ell^{-1}(\delta)$. Therefore, to prove LNE by using  Proposition 
\ref{cor:characterization of normal embedding} instead of Proposition 
\ref{prop:characterization of normal embedding2}, we just have to test
the criterion on pairs of  arcs  among these $3$ principal components,
so we just have to test $3$ pairs of arcs instead of $15$. 

\begin{center}
 \begin{tikzpicture}
 
\draw[thick](-3,0)--(-2,0);
\draw[thick](-2,0)--(-1,0);
\draw[thick](-1,0)--(0,0);
\draw[thick](0,0)--(1,0);
\draw[thick](1,0)--(2,0);
\draw[thick](2,0)--(3,0);
\draw[thick](3,0)--(4,0);
\draw[thick](4,0)--(5,0);
\draw[thick](0,0)--(0,-1);
\draw[thick](0,-1)--(0,-2);
\draw[thick](0,-2)--(0,-3);

\draw[dotted](-2,0)--(-2.5,1);
\draw[dotted](-1,0)--(-1.5,1);
\draw[dotted](0,-2)--(1,-2);
\draw[dotted](1,-2)--(2,-2);
\draw[dotted](5,0)--(5,-1);
\draw[dotted](5,-1)--(5,-2);
\draw[dotted](5,-2)--(5,-3);
\draw[dotted](5,-1)--(4,-1);
\draw[dotted](-3,0)--(-2,-1);
\draw[dotted](-3,0)--(-4,-1);
\draw[dotted](-2,-1)--(-2,-2);
\draw[dotted](-2,-2)--(-2,-3);
\draw[dotted](-2,-1)--(-1,-1);
\draw[dotted](-4,-1)--(-4,-2);
\draw[dotted](-4,-2)--(-4,-3);
\draw[dotted](-4,-1)--(-3,-1);
\draw[dotted](-3,0)--(-3,1);
\draw[dotted](-3,0)--(-4,0);
\draw[dotted](-3,0)--(-3.65,.65);
\draw[dotted](-4,0)--(-5,0);
\draw[dotted](-3,1)--(-3,2);
\draw[dotted](-3.65,.65)--(-4.3,1.3);
\draw[dotted](0,-3)--(1,-3);
\draw[dotted](1,-3)--(2,-3);

\draw[dashed,>-stealth,->](0,0)--+(.75,.65);
\draw[dashed,>-stealth,->](0,-2)--+(.7,.7);
\draw[dashed,>-stealth,->](5,-1)--+(.7,.7);

\draw[dashed,>-stealth,->](-2,0)--+(.2,.9);
\draw[dashed,>-stealth,->](-2,-1)--+(.7,-.7);
\draw[dashed,>-stealth,->](-4,-1)--+(.7,-.7);

\draw[fill=lightgray,thick](0,0)circle(2pt);
 \node(a)at(0.25,-0.35){{$\bf 2$}};

\draw[fill=lightgray,thick](-2,0)circle(2pt);
\node(a)at(-2,-0.35){{$\bf 2$}};

\draw[fill=lightgray,thick](0,-2)circle(2pt);
\node(a)at(-.3,-2){{$\bf 2$}};

\draw[fill=black](-3,0)circle(2pt);
\node(a)at(-3,-0.35){{$\bf 1$}};

\draw[fill=black](1,0)circle(2pt);
\node(a)at(1,-0.35){{$\bf 1$}};

\draw[fill=black](5,0)circle(2pt);
\node(a)at(5.25,-0.35){{$\bf 1$}};

\draw[fill=white,thick](-1,0)circle(2pt);
\node(a)at(-1,-0.35){{$\bf\frac{5}{2}$}};

\draw[fill=gray](3,0)circle(2pt);
\node(a)at(3,-0.35){{$\bf 2$}};

\draw[fill=gray,thick](2,0)circle(2pt);
\node(a)at(2,-0.35){{$\bf\frac{3}{2}$}};

\draw[fill=gray,thick](4,0)circle(2pt);
\node(a)at(4,-0.35){{$\bf\frac{3}{2}$}};

\draw[fill=black](0,-3)circle(2pt);
\node(a)at(-.3,-3){{$\bf 1$}};

\draw[fill=lightgray](0,-1)circle(2pt);
\node(a)at(-.3,-1){{$\bf 3$}};

\draw[fill=lightgray](-2.5,1)circle(2pt);
 \node(a)at(-2.5,1.3){{$\bf 3$}};

\draw[fill=white](-1.5,1)circle(2pt);
 \node(a)at(-1.5,1.3){{$\bf 3$}};
 
\draw[fill=white,thick](1,-2)circle(2pt);
 \node(a)at(1,-1.6){{$\bf\frac{5}{2}$}};
 
\draw[fill=gray,thick](1,-3)circle(2pt);
 \node(a)at(1,-2.6){{$\bf\frac{3}{2}$}};

\draw[fill=gray](2,-3)circle(2pt);
 \node(a)at(2,-2.6){{$\bf 2$}};
 
\draw[fill=lightgray,thick](5,-1)circle(2pt);
 \node(a)at(5.3,-1){{$\bf 2$}};
 
\draw[fill=white,thick](5,-2)circle(2pt);
 \node(a)at(5.3,-2){{$\bf\frac{5}{2}$}};

\draw[fill=white](5,-3)circle(2pt);
 \node(a)at(5.3,-3){{$\bf 3$}};

\draw[fill=lightgray](4,-1)circle(2pt);
 \node(a)at(3.7,-1){{$\bf 3$}};
 
\draw[fill=lightgray,thick](-2,-1)circle(2pt);
 \node(a)at(-2.3,-1){{$\bf 2$}};

\draw[fill=white,thick](-2,-2)circle(2pt);
 \node(a)at(-2.3,-2){{$\bf\frac{5}{2}$}};

\draw[fill=white](-2,-3)circle(2pt);
 \node(a)at(-2.3,-3){{$\bf 3$}};
 
\draw[fill=lightgray](-1,-1)circle(2pt);
 \node(a)at(-1,-1.3){{$\bf 3$}};
 
\draw[fill=lightgray,thick](-4,-1)circle(2pt);
 \node(a)at(-4.3,-1){{$\bf 2$}};

\draw[fill=white,thick](-4,-2)circle(2pt);
 \node(a)at(-4.3,-2){{$\bf\frac{5}{2}$}};
 
\draw[fill=white](-4,-3)circle(2pt);
 \node(a)at(-4.3,-3){{$\bf 3$}};

\draw[fill=lightgray](-3,-1)circle(2pt);
 \node(a)at(-3,-1.3){{$\bf 3$}};

\draw[fill=gray](-5,-0)circle(2pt);
 \node(a)at(-5,.35){{$\bf 2$}};

\draw[fill=gray](-3,2)circle(2pt);
 \node(a)at(-3.3,2){{$\bf 2$}};
 
\draw[fill=gray](-4.3,1.3)circle(2pt);
 \node(a)at(-4.3,1.6){{$\bf 2$}};

\draw[fill=gray,thick](-3.65,.65)circle(2pt);
 \node(a)at(-3.65,1.1){{$\bf\frac{3}{2}$}};

\draw[fill=gray,thick](-3,1)circle(2pt);
 \node(a)at(-3.3,1){{$\bf\frac{3}{2}$}};

\draw[fill=gray,thick](-4,0)circle(2pt);
 \node(a)at(-4,0.35){{$\bf\frac{3}{2}$}};
 
\draw[fill=white](2,-2)circle(2pt);
 \node(a)at(2,-1.6){{$\bf 3$}};

\node(a)at(-4.8,-1){{$\widehat{\cal F}$}};

\draw[thick](7.5,-2)--(7.5,-1);
\draw[thick](7.5,-1)--(6.5,-1);
\draw[thick](7.5,-2)--(8.5,-1.5);
\draw[thick](7.5,-2)--(7.5,-1);
\draw[thick](7.5,-1)--(7.5,0);
\draw[thick](8.5,-1.5)--(9.5,-1);
\draw[thick](7.5,0)--(8.5,0);

\draw[dashed,>-stealth,->](7.5,-1)--+(.7,.7);

\draw[fill=black](7.5,-2)circle(2pt);
\node(a)at(7.2,-2){{$\bf 1$}};

\draw[fill=lightgray,thick](7.5,-1)circle(2pt);
\node(a)at(7.8,-1.2){{$\bf 2$}};

\draw[fill=lightgray](6.5,-1)circle(2pt);
\node(a)at(6.5,-.65){{$\bf 3$}};

\draw[fill=white,thick](7.5,0)circle(2pt);
\node(a)at(7.5,.36){{$\bf\frac{5}{2}$}};

\draw[fill=white](8.5,0)circle(2pt);
\node(a)at(8.5,.36){{$\bf 3$}};

\draw[fill=gray,thick](8.5,-1.5)circle(2pt);
\node(a)at(8.5,-1.1){{{$\bf\frac{3}{2}$}}};

\draw[fill=gray](9.5,-1)circle(2pt);
\node(a)at(9.5,-.65){{{$\bf 2$}}};

\node(a)at(8,-2.5){{$\cal F$}};

\end{tikzpicture}
\end{center}
\end{example}

Notice that each component of the  complementary subgraph $  \widehat{\cal F} \setminus \widehat{\cal F}'$ is a  rooted tree oriented from its root by  strictly increasing inner rates and attached to $\widehat{\cal F}'$ by a  single edge adjacent to its root.  

The proof of Proposition \ref{cor:characterization of normal embedding} needs the following  key Lemma.

\begin{lemma}  \label{lem:partner3} Let $(j')$ be a vertex of $T$.
Let $T'$ be a maximal connected subgraph of $T$ whose vertices
$(k)\neq (j')$ satisfy that the simple path from $(k)$ to the
  root vertex passes through $(j')$. Let   $\widehat{\cal F}_1$ and $\widehat{\cal F}_2$ be two distinct components of $ \widehat{\cal F} \setminus (\cal E \circ \widehat{\cal C})^{-1}(j')$  with  $(\cal E \circ \widehat{\cal C})(\widehat{\cal F}_1)=(\cal E \circ \widehat{\cal C})(\widehat{\cal F}_2)=T'$. 

 Assume that for all pairs of vertices $(\nu'_1)$ and $(\nu'_2 )$   adjacent to $\widehat{\cal F}_1$ and  $\widehat{\cal F}_2$ respectively, all generic projections $\ell  \colon (X,0) \to (\C^2,0)$ and all test arcs  $\delta'$  at $(j')$, any pair of components $\delta'_1, \delta'_2$ over $\delta'$ with $\delta'_i(t) \in \widehat{F}_{t,\nu'_i}$  satisfies the arc criterion. Then for all generic projection $\ell  \colon (X,0) \to (\C^2,0)$ and for all test arcs  $\delta$  at a vertex $(j)$ of $T'$, any pair of arcs  $\delta_1, \delta_2$ over $\delta$ with $\delta_i(t) \in \widehat{\cal F}_{t, \nu_i}$ with $(\nu_i) \in V(\widehat{\cal F}_i) $   satisfies the arc criterion. 
\end{lemma}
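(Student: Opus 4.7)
The plan is to apply the partner pair construction of Section \ref{sec:partner pairs} to propagate the assumed arc criterion from test arcs at $(j')$ outwards into the branch $T'$. Fix a test arc $\delta$ at $(j) \in V(T')$ with lifts $\delta_1, \delta_2$ such that $\delta_i(t) \in \widehat{F}_{t,\nu_i}$ and $(\nu_i) \in V(\widehat{\cal F}_i)$. Choose a test arc $\delta'$ at $(j')$, say a real slice of a curvette of the curve $C_{j'}$ whose strict transform avoids $\Delta^*$, and form the partner pair $(\delta'_1,\delta'_2)$ over $\delta'$ by lifting the segment $S_t=[\delta(t),\delta'(t)]\subset F_t$ starting at $\delta_1(t)$ and $\delta_2(t)$ respectively.

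The first step is to locate the vertices $(\nu'_1),(\nu'_2)$ of $\widehat{\cal F}$ with $\delta'_i(t)\in\widehat{F}_{t,\nu'_i}$ and to verify that each $(\nu'_i)$ is adjacent in $\widehat{\cal F}$ to the component $\widehat{\cal F}_i$. Indeed, the lift $P_{i,t}$ of $S_t$ projects onto $S_t$ and therefore traces, in $\widehat{\cal F}$, a path from $(\nu_i)\in V(\widehat{\cal F}_i)$ to $(\nu'_i)\in(\cal E\circ\widehat{\cal C})^{-1}(j')$; since the path must leave $\widehat{\cal F}_i$ through its boundary with $(\cal E\circ\widehat{\cal C})^{-1}(j')$ and $(\nu'_i)$ is the endpoint, $(\nu'_i)$ is adjacent to $\widehat{\cal F}_i$. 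The hypothesis of the lemma therefore guarantees that the partner pair $(\delta'_1,\delta'_2)$ satisfies the arc criterion.

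The second step transfers the arc criterion to $(\delta_1,\delta_2)$ via Lemma \ref{lem:partner}. Since $(j')$ lies on the unique path in $T$ from the root to $(j)$ and inner rates strictly increase along such paths by Proposition \ref{lem:increasing rates}, Lemma \ref{lem:inner} yields $q_i(\delta,\delta')=q_{j'}$; on the other hand, Lemma \ref{lem:inner X} together with the fact that $(\nu_1),(\nu_2)$ sit in distinct components of $\widehat{\cal F}\setminus(\cal E\circ\widehat{\cal C})^{-1}(j')$ forces $q_i(\delta_1,\delta_2)\le q_{j'}$. When the strict inequality $q_i(\delta'_1,\delta'_2)<q_{j'}$ holds, Lemma \ref{lem:partner} applies symmetrically and forces $q_i(\delta_1,\delta_2)=q_i(\delta'_1,\delta'_2)$ and $q_o(\delta_1,\delta_2)=q_o(\delta'_1,\delta'_2)$, which gives the arc criterion for $(\delta_1,\delta_2)$. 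The flexibility offered by the hypothesis, which ranges over all pairs of adjacent vertices $(\nu'_1),(\nu'_2)$ and all test arcs $\delta'$ at $(j')$, lets us select partner pairs realizing this strict inequality whenever the combinatorics of $\widehat{\cal F}$ permit a max-min path dipping to a rate strictly below $q_{j'}$.

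The main obstacle is the borderline case in which every allowed partner pair satisfies $q_i(\delta'_1,\delta'_2)=q_{j'}$, so that Lemma \ref{lem:partner} gives no information. In that regime $q_i(\delta_1,\delta_2)=q_{j'}$ as well, and it suffices to establish the outer lower bound $d_o(\delta_1(t),\delta_2(t))\ge c\,t^{q_{j'}}$. I would derive it from the triangle inequality
$$d_o(\delta_1,\delta_2)\ge d_o(\delta'_1,\delta'_2)-d_o(\delta_1,\delta'_1)-d_o(\delta_2,\delta'_2),$$
where the arc criterion on $(\delta'_1,\delta'_2)$ supplies $d_o(\delta'_1,\delta'_2)\ge c_0\,t^{q_{j'}}$ while each $d_o(\delta_i,\delta'_i)\le \operatorname{length}(P_{i,t})=\Theta(t^{q_{j'}})$. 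Controlling the constants requires choosing $\delta'$ so that the outer separation of the partner pair strictly dominates the sum of the lifted segment lengths, which can be arranged by pushing $\delta'$ to an appropriate position within the $A(q_{j'})$-piece between $(j)$ and $(j')$, using the bilipschitz structure provided by the Polar Wedge Lemma \ref{polar wedge lemma}. This constant-matching is the technically delicate point of the argument.
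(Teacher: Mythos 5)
Your first two steps, the partner-pair construction and the reduction of the case $q_i(\delta'_1,\delta'_2)<q_{j'}$ via Lemma~\ref{lem:partner}, match what the paper does (the paper invokes part~\ref{partner2} of Lemma~\ref{lem:partner2}, which is the same mechanism). The genuine gap lies in the borderline case $q_i(\delta'_1,\delta'_2)=q_{j'}$, which you correctly identify as the hard case but handle by the triangle inequality
\[
d_o(\delta_1,\delta_2)\ge d_o(\delta'_1,\delta'_2)-d_o(\delta_1,\delta'_1)-d_o(\delta_2,\delta'_2),
\]
hoping to make the first term dominate the others by a clever choice of $\delta'$. This does not work: every term on the right is $\Theta(t^{q_{j'}})$, and the big-$\Theta$ constants are not under your control. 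The arc-criterion hypothesis gives $d_o(\delta'_1,\delta'_2)=\Theta(t^{q_{j'}})$ for each admissible $\delta'$, but each such statement comes with its own constant and there is no asserted uniformity as $\delta'$ varies; at the same time, both $d_o(\delta_i,\delta'_i)\le K_0\,d_o(\delta(t),\delta'(t))$ and the lower-bound constant on $d_o(\delta'_1,\delta'_2)$ change simultaneously with $\delta'$, so there is no mechanism that forces the first to dominate. The appeal to the Polar Wedge Lemma and the $A$-piece structure does not supply the needed uniform lower bound either, since that structure controls inner geometry, not the outer separation of two distinct sheets over $\gamma$.

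The paper handles this case in a fundamentally different way. It shows first that the outer distance between the entire boundaries $\partial\widehat S_{t,1}$ and $\partial\widehat S_{t,2}$ of the two surface pieces is $\Theta(t^{q})$ (here the uniformity issue is absorbed into a semialgebraic compactness argument, ``using the same arguments as in the proof of Proposition~\ref{liftingoftestarcs}''). It then passes to a second generic projection $\ell'=(x,z)$ which is also generic for the complex curve $\ell^{-1}(\gamma)$, observes that $\ell'(\widehat S_{t,1})$ and $\ell'(\widehat S_{t,2})$ are two discs in $\{x=t\}$ separated by $\Theta(t^{q})$, and invokes Theorem~\ref{generic projection bilipschitz} to transfer the outer contact back to $X$, giving $q_o(\delta_1,\delta_2)=q$ directly. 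This use of a second generic projection and the bilipschitz invariance of outer contact under generic projection is the key idea missing from your proposal. There are also smaller issues: you assert without proof that the endpoint $(\nu'_i)$ of the lifted segment is adjacent to $\widehat{\cal F}_i$, which requires an argument that the lift does not exit and re-enter across $(\cal E\circ\widehat{\cal C})^{-1}(j')$; and you do not establish that $q_{\nu'_1,\nu'_2}$ is independent of the choice of adjacent vertices, which the paper proves as a preliminary step and uses throughout.
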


 \begin{proof} Let $\widehat{\cal F}_1$ and $\widehat{\cal F}_2$ be the two components of $ \widehat{\cal F} \setminus (\cal E \circ \widehat{\cal C})^{-1}(j')$ which contain  $(\nu_1)$ and $(\nu_2)$ respectively. First, notice that if $(\nu'_1)$ and $(\nu''_1)$ are two vertices of $(\cal E \circ \widehat{\cal C})^{-1}(j')$ adjacent  to $\widehat{\cal F}_1$, then there is a simple path inside $\widehat{\cal F}_1$ from 
  $(\nu'_1)$ and $(\nu''_1)$ and this path then has its inner rates all $\geq q_{j'}$. The same is true for two vertices adjacent to  $\widehat{\cal F}_2$ with a path inside $\widehat{\cal F}_2$. Therefore, if 
 $(\nu'_1)$  and $(\nu'_2)$ are two vertices of $(\cal E \circ \widehat{\cal C})^{-1}(j')$ adjacent respectively to $\widehat{\cal F}_1$ and $\widehat{\cal F}_2$, then $q_{\nu'_1, \nu'_2}$ does not depend on the choice of $(\nu'_1)$  and $(\nu'_2)$. We set $q=q_{\nu'_1, \nu'_2}$. 
 
 Notice that when $q < q_{j'}$, the proof of the lemma is a direct application of \ref{partner2} of Lemma \ref{lem:partner2}.  We now  give  the proof in all cases.   
 
We have $q_{\nu_1}= q_{\nu_2} =q_j >q_{j'}$ and for $i=1,2$, any simple path from $(\nu_i)$ to a vertex of $(\cal E \circ \widehat{\cal C})^{-1}(j')$ has  strictly  decreasing
rates. Moreover, any simple path $p$ from $(\nu_1)$ to $(\nu_2)$ in $\widehat{\cal F}$ must go through a vertex $(\nu_1)$ adjacent to  $\widehat{\cal F}_1$ and then to a vertex $(\nu_2)$ adjacent to $\widehat{\cal F}_2$ (maybe $(\nu_1)=(\nu_2)$). This implies that  $q_{\nu_1,\nu_2}= q$ in the graph $\widehat{\cal F}$. 
 
 Let $\delta'$ be a test curve at $(j')$, let $\delta'_1, \delta'_2$   be  the partner pair of  $\delta_1, \delta_2$ over $\delta'$ and let $(\nu'_1)$  and $(\nu'_2)$ be the two vertices of $(\cal E \circ \widehat{\cal C})^{-1}(j')$ such that  $\delta'_1(t)  \in \widehat{F}_{t,\nu'_1}$ and $\delta'_2(t)  \in \widehat{F}_{t,\nu'_2}$.  By Lemma \ref{lem:inner X} we therefore obtain  $q_i(\delta'_1,\delta'_2) = q_{\nu'_1,\nu'_2} = q_{\nu_1, \nu_2}=q_i(\delta_1,\delta_2) $, so 
  $q_i(\delta_1,\delta_2)=q$. 
  
 Let us now prove that $q_o(\delta'_1,\delta'_2) = q$. 
 
 Choose coordinates of $\C^2$ so that $\ell=(x,y)$ and $F_t = \{x=t\}$ with $t \in \R^+$.  For $i=1,2$, let $\widehat{S}_{t,i}$ be the  surface inside $\widehat{F}_t$ defined  as $\bigcup_{v \in V(\widehat{\cal F}_i)} \widehat{F}_{t,v}$
 union the  intermediate annuli corresponding to the edges of $\widehat{\cal F}_i$ and the edges joining $\widehat{\cal F}_i$ to vertices of $(\cal E \circ \widehat{\cal C})^{-1}(j')$. The two surfaces  $\widehat{S}_{t,1}$ and $\widehat{S}_{t,2}$ are disjoint. By  hypothesis,   any  test curve $\delta'$ at $(\cal E \circ \widehat{\cal C})(\nu_1')$ satisfies  the arc criterion, i.e., for any pair of components $\delta_1, \delta_2$ of $\ell^{-1}(\delta')$, we have $q_o(\delta'_1, \delta'_2) = q_i(\delta'_1, \delta'_2)= q$. In particular, this holds for any test curve $\delta'$ such that $\delta'(t) \in \partial S_t$, where $S_t$ is the disc in $F_t$ defined by $S_t = \ell(\widehat{S}_{t,1}) =  \ell(\widehat{S}_{t,2})$ and  $\partial $ means boundary. Then, using the same arguments as in the proof of Proposition \ref{liftingoftestarcs}, we obtain that  the outer distance between the boundaries $ \partial \widehat{S}_{t,1}$ and   $ \partial \widehat{S}_{t,2}$ is a $\Theta(t^{q})$. Therefore, the images $\ell'(\widehat{S}_{t,1})$ and $\ell'(\widehat{S}_{t,2})$ by a generic projection $\ell' \colon (X,0) \to (\C^2,0)$ given by $\ell'=(x,z)$ consist of two discs of diameter $ \Theta(t^{q})$ at  distance $\Theta(t^{q})$  from each other  inside $\ell'(F_{\nu', t}) \subset \{x=t\}$. Since $\ell'(\delta_1) \subset  \ell'(\widehat{S}_{t,1})$ and  $\ell'(\delta_2) \subset  \ell'(\widehat{S}_{t,2})$, we  then  have $q_o({\ell'}(\delta_1), {\ell'}(\delta_2))=q$. 
   
 Let $\gamma$  be a plane curve germ such that $\delta$ is a real slice of $\gamma$ and take for  ${\ell'} \colon (X,0) \to (\C^2,0)$  a projection  which is also generic for the complex curve $\gamma_0 = \ell^{-1}(\gamma)$. Then the restriction ${\ell'} \mid_{\gamma_0} \colon \gamma_0 \to {\ell'}(\gamma_0)$ is a bilipschitz homeomorphism for the outer metric (Theorem \ref{generic projection bilipschitz}). In particular,  we have  $q_o(\delta_1, \delta_2) = q_o({\ell'}(\delta_1), {\ell'}(\delta_2))$. Therefore $q_o(\delta_1, \delta_2) =q$. 

Summarizing, we then obtain   $q_o(\delta'_1, \delta'_2) = q = q_i(\delta'_1, \delta'_2)$, proving the lemma. 
 \end{proof}

\begin{proof}[of Proposition \ref{cor:characterization of normal embedding}]
The ``only'' if direction is a direct consequence of   Proposition \ref{prop:characterization of normal embedding2}. Hence we only  need to prove the ``if''  direction. Assume that for all  generic projections $\ell$,  any  pair of principal components over a nodal  test arc  for $\ell$ satisfies the arc criterion. 
By Proposition \ref{prop:characterization of normal embedding2} we just need to show that this implies that for all test arcs $\delta$ for $\ell$, 
any pair of components $\delta_1,\delta_2$ of $\ell^{-1}(\delta)$
also satisfies  the arc criterion $q_i(\delta_1,\delta_2) = q_o(\delta_1,\delta_2)$. 
 
By assumption, any pair of principal components over a nodal test arc
satisfies the arc criterion. Hence we first assume that $\delta$  is
a   test curve at a vertex $(j)$ of $T$ which is not a node, and that
 $\delta_1$ and $\delta_2$ are principal components. For $i=1,2$, let $(\nu_i)$ be  the vertex of $\widehat{\cal F}$ such that $\delta_i\cap
\widehat{F}_{t,\nu_i} \neq \emptyset$.  Let   $\cal S$ be the string in $T$ minus the nodes   which contains $(j)$. Since $\ell^{-1}(\delta)$ contains a principal component, then the two vertices $(j')$ and $(j'')$ adjacent to $\cal S$ are nodes. We assume $q_{j'} < q_{j} < q_{j''}$.  Let $\cal S_1$ and $\cal S_2$ be the two strings in $\widehat{\cal F}$ containing respectively $(\nu_1)$ and $(\nu_2)$ and such that   $ (\cal E \circ \widehat{\cal C})(\cal S_i)=\cal S$. For each $i=1,2$, let  $(\nu'_i)$ and $(\nu''_i)$ be the two nodes of $\widehat{\cal F}$  adjacent to $\cal S_i$ which map respectively to $(j')$ and $(j'')$ by $\cal C \circ \widehat{\cal E}$ (we may have $\nu'_1 = \nu'_2$ or $\nu''_1 = \nu''_2$). 

Assume first that $q_{\nu_1, \nu_2}=  q_{j}$. Since the
   strings $\cal S_1$ and $\cal S_2$ are disjoint except perhaps at
   their extremities, this means that there is a simple  path from
   $(\nu''_1)$ to $(\nu''_2)$ with rates $\geq q_{j''}$ and that
   $q_{\nu'_1, \nu'_2}=  q_{j'}$ and   $q_{\nu''_1, \nu''_2}=
   q_{j''}$.  By definition of $\rho_{\ell}$, at least one of $(j')$
 and $(j'')$ is not a $\Delta$-node. Assume  that $(j')$  is not a
 $\Delta$-node. Let $\delta'$ be a test curve at $(j')$ and let
 $(\delta'_1, \delta'_2)$ be the partner pair of $(\delta_1,
 \delta_2)$  over $\delta'$. Since $\delta_1$ and $\delta_2$
   are principal it follows that $\delta_1'$ and $\delta_2'$ are
   principal and hence by hypothesis $(\delta'_1, \delta'_2)$ satisfies the arc criterion. Then, by Lemma  \ref{LNE along strings}, the values of the Gauss map $\widetilde{\lambda}$ at $(\nu'_1)$ and $(\nu'_2)$ are distinct. Since for each $i=1,2$, the vertices $(\nu'_i)$ and $(\nu_i)$ are in the same connected component of  $\widehat{\cal F}$ minus its $\cal P$-nodes, then they correspond to the same value of the Gauss-map. Summarizing, we obtain that  $\widetilde{\lambda}$ has distinct values at $(\nu_1)$ and $(\nu_2)$. Applying Lemma  \ref{LNE along strings} in the other direction, we  obtain that $(\delta_1, \delta_2)$ satisfies the arc criterion. When  $(j')$ is a $\Delta$-node, then $(j'')$ is not a $\Delta$-node and one proves   that $(\delta_1, \delta_2)$ satisfies the arc criterion  by the same arguments as in the previous case. 
 
 Assume now  that $q_{\nu_1, \nu_2}< q_{j}$, so we have
   $q_{\nu_1, \nu_2} \leq q_{j'}$. Since $(j)$ and $(j'')$ are in the
 same connected component of $T \setminus (j')$ and since the pair
 $(\delta'_1, \delta'_2)$ satisfies the arc criterion, then
 $(\delta_1, \delta_2)$ also satisfies the arc criterion by Lemma
 \ref{lem:partner2} \ref{partner2}. 

Since we have proved that all pairs of principal components satisfy
 the arc criterion, we must prove that if one of $\delta_1$ and $\delta_2$, say $\delta_1$, is not principal, then the pair $\delta_1,\delta_2$ also satisfies $q_i(\delta_1,\delta_2) = q_o(\delta_1,\delta_2)$. 
Let $(\nu_i)$ be the vertex of $\widehat{\cal F}$ such that $\delta_i\cap
\widehat{F}_{t,\nu_i} \neq \emptyset$. Since $\delta_1$ is not principal,  $(\nu_1)$ is in the  complementary subgraph $ \widehat{\cal F} \setminus \widehat{\cal F}' $. Let  $\widehat{\cal F}_1$  be connected component of $ \widehat{\cal F} \setminus \widehat{\cal F}' $  containing $(\nu_1)$. It is a  tree  and there is a unique vertex  $(\nu_1')$ of $\widehat{\cal F}'$ adjacent to $\widehat{\cal F}_1$.  Then  $(\nu_1')$ is a node of $\widehat{\cal F}$ and   the vertex $(j') = (\cal C \circ \widehat{\cal E})(\nu'_1)$ is a node of $T$. We use again the notations introduced in the proof of Lemma  \ref{lem:partner3}: let $\widehat{S}_{t,1}$ be the part of $\widehat{F}_t$ associated with $\widehat{\cal F}_1$ and set $S_t = \ell(\widehat{S}_{t,1})$. Let $\Pi$ be the polar curve of $\ell$.   Since $\Pi\cap \widehat{S}_{t,1}= \emptyset$, the restriction  $\ell\vert_{\widehat{S}_{t,1}} \colon \widehat{S}_{t,1} \to   S_t $ is a regular connected covering over a disc, and hence, an isomorphism. This implies that  $(\nu_2)$ is not a vertex of $ \widehat{\cal F}_1$, so it is in a connected component  $\widehat{\cal F}_2$ of $\widehat{\cal F} \setminus (\cal E \circ \widehat{\cal C})^{-1}(j')$ distinct from $\widehat{\cal F}_1$. 
 %Let $\delta'$ be a nodal test arc at $(\nu')$.  Consider the partner pair $(\delta'_1, \delta'_2)$ of $(\delta_1, \delta_2)$ over $\delta'$. 

Assume  first that there is a vertex $(\nu'_2)$  adjacent to $\widehat{\cal F}_2$ which is in $\widehat{\cal F}'$. Then all vertices adjacent to $\widehat{\cal F}_2$ also are in $\widehat{\cal F}'$. Since $(\nu'_1)$ is in $\widehat{\cal F}'$, then all vertices adjacent to  $\widehat{\cal F}_1$ also are in  $\widehat{\cal F}'$.  Therefore, by hypothesis, any pair of components $\delta'_1, \delta'_2$ at such vertices over any test curve at $(j')$ satisfies the arc criterion. Then, applying  Lemma \ref{lem:partner3}  we obtain that $(\delta_1, \delta_2)$ also satisfies the arc criterion. 

Assume now that all vertices  $(\nu'_2)$  adjacent to $\widehat{\cal F}_2$ are in $\widehat{\cal F} \setminus \widehat{\cal F}'$, i.e., $\delta'_2$ is not principal. The component  $\widetilde{\cal F}_2$ of  $\widehat{\cal F} \setminus \widehat{\cal F}'$ containing $(\nu_2)$ is a tree and there is a unique vertex  $(\nu''_2)$ of $\widehat{\cal F}'$ adjacent to $\widetilde{\cal F}_2$.  Set $(j'') = (\cal E \circ \widehat{\cal C})(\nu''_2)$. Then $(\nu''_2)$ is a node of $\widehat{\cal F}$. Let  $\widetilde{\cal F}_1$ be the connected  component of $\widehat{\cal F} \setminus (\cal E \circ \widehat{\cal C})^{-1}(j'')$ containing $(\nu_1)$.  
 Since  $(\nu'_1)$ is in $ \widehat{\cal F}'$ and since  $(\nu'_1) \in \widetilde{\cal F}_1$,  then  any vertex  $(\nu''_1)$ adjacent to   $ \widetilde{\cal F}_1$  is also  in $\widehat{\cal F'}$. 
 
 Therefore, by hypothesis, any pair of components $\delta'_1, \delta'_2$ at such vertices $(\nu''_1)$ and $(\nu''_2)$ over any test curve at $(j'')$ satisfies the arc criterion. Then, applying twice Lemma \ref{lem:partner3} we obtain first  that any pair $(\delta'_1, \delta'_2)$  at vertices $(\nu'_1)$  and $(\nu'_2)$ over any test curve $\delta'$ at $(j')$ satisfies the arc criterion, and then that $(\delta_1, \delta_2)$ also satisfies the arc criterion. 

We then have proved that for all test {curves} $\delta$, any pair of
components $\delta_1,\delta_2$ of  $\ell^{-1}(\delta)$ where
at least one of $\delta_1,\delta_2$ is not principal satisfies the
arc criterion.
\end{proof}

\section{Inner contacts between complex curves on a normal surface}\label{sec: inner
  contact on a normal surface 2}

We need the following proposition in the proof of Theorem  \ref{cor:complex characterization of normal embedding}.
\begin{proposition} \label{lem:complex real rates}  Let $(\gamma,0)$ and $(\gamma',0)$ be two irreducible complex curves on $(X,0)$  tangent to the same complex line parametrized by $x \in \C$, let $\delta_1,\ldots,\delta_r$ be the components of the real slice $\gamma \cap \{x=t \in \R^+\}$ and let $\delta'_1,\ldots,\delta'_s$ be those of $\gamma' \cap \{x=t \in \R^+\}$. Then we have: 
$$q_{out}(\gamma, \gamma') = \max_{k,l}  q_o(\delta_k,  {\delta'_l}) \hskip0,5cm  \hbox{ and } \hskip0,5cm  q_{inn}(\gamma, \gamma') = \max_{k,l}  q_i(\delta_k,  {\delta'_l}).$$
\end{proposition}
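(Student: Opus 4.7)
The plan is to compare both sides by understanding how the link circles $\gamma \cap \mathbb S^{2n-1}_\epsilon$ and $\gamma' \cap \mathbb S^{2n-1}_\epsilon$ decompose into real slice arcs, and showing that the minimum (outer or inner) distance between the two links is realized, up to $\Theta$, by a pair of real slice points. First I would choose coordinates $(x_1, \ldots, x_n)$ in which the common tangent semi-line of $\gamma$ and $\gamma'$ is $x_1 \in \R^+$. Since both curves are tangent to the $x_1$-axis, every point $p \in \gamma$ satisfies $\norm{p} = |x_1(p)|(1 + o(1))$, so $|x_1(p)| = \Theta(\epsilon)$ on $\gamma \cap \mathbb S^{2n-1}_\epsilon$; the $r$ real slices $\delta_k$ of $\gamma$ are exactly the $r$ components of $\gamma \cap \{x_1 \in \R^+\}$, and similarly for $\gamma'$.

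For the outer equality, the cleanest route is via Theorem \ref{generic projection bilipschitz}. I would pick a linear projection $\ell' \colon \C^n \to \C^2$ whose first coordinate is $x_1$ and which is generic for the curve $\gamma \cup \gamma'$ in the sense of Definition \ref{def:generic projection curve}. Then $\ell'|_{\gamma \cup \gamma'}$ is a bilipschitz homeomorphism for the outer metric, so $q_{out}(\gamma, \gamma') = q_{out}(\ell'(\gamma), \ell'(\gamma'))$ and $q_o(\delta_k, \delta'_l) = q_o(\ell'(\delta_k), \ell'(\delta'_l))$ for each $k, l$. Since $\ell'$ preserves the $x_1$-coordinate, the $\ell'(\delta_k)$ are precisely the real slice components of the plane curve $\ell'(\gamma)$, and similarly for $\gamma'$. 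The classical Puiseux characterization of plane-curve contact recalled in Section \ref{subsec:contact} then identifies $q_{out}(\ell'(\gamma), \ell'(\gamma'))$ with the maximum order of vanishing of $f_k - g_l$ over pairs of Puiseux branches, which at $x = t \in \R^+$ coincides with $q_o(\ell'(\delta_k), \ell'(\delta'_l))$. This yields the outer equality.

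For the inner equality, the bound $q_{inn}(\gamma, \gamma') \geq \max_{k,l} q_i(\delta_k, \delta'_l)$ is immediate: taking the pair $(k_0, l_0)$ realizing the maximum and reparametrizing so that $\delta_{k_0}(\epsilon), \delta'_{l_0}(\epsilon) \in \mathbb S^{2n-1}_\epsilon$ gives a specific pair of points on the two link circles at inner distance $\Theta(\epsilon^{q_i(\delta_{k_0}, \delta'_{l_0})})$. For the reverse direction I would use the geometric decomposition of Section \ref{subsec:geometric decomposition} and Lemma \ref{lem:inner X}: given any pair $(p, p') \in (\gamma \cap \mathbb S^{2n-1}_\epsilon) \times (\gamma' \cap \mathbb S^{2n-1}_\epsilon)$, write $x_1(p) = \epsilon e^{i\alpha}(1+o(1))$ and $x_1(p') = \epsilon e^{i\beta}(1+o(1))$. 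If $|\alpha - \beta|$ is bounded away from zero then $d_i(p, p') \geq d_o(p, p') \geq |x_1(p) - x_1(p')| = \Theta(\epsilon) \geq \epsilon^{\max q_i}$ since $\max q_i \geq 1$. Otherwise $|\alpha - \beta|$ is small and I would perturb $p'$ along $\gamma'$ to $p''$ with $x_1(p'') = x_1(p)$ at negligible inner cost; then $p$ and $p''$ lie in a common complex fiber $\widehat F_z$ with $|z| = \Theta(\epsilon)$, on branches of $\gamma$ and $\gamma'$ indexed by fiber-graph vertices $(\nu_k), (\nu'_l)$ that correspond (for $z$ real positive) to the real slices $\delta_k, \delta'_l$. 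Lemma \ref{lem:inner X} then gives $d_i(p, p'') = \Theta(\epsilon^{q_{\nu_k, \nu'_l}}) = \Theta(\epsilon^{q_i(\delta_k, \delta'_l)})$, completing the lower bound.

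The main obstacle will be this last identification: showing that inner distances between branches of $\gamma, \gamma'$ over a complex fiber $\{x_1 = z\}$ are $\Theta$-comparable, uniformly in $\arg(z)$, to those over $\{x_1 = |z|\}$. This amounts to an $S^1$-invariance of the weighted fiber-graph $\widehat{\cal F}$ under $x_1 \mapsto e^{i\alpha} x_1$. Although the pieces of the geometric decomposition of $(X,0)$ are not literally $S^1$-invariant (the resolution $\rho$ depends on choices of blow-up centers), the rates attached to exceptional curves are determined by Puiseux exponents of the discriminant, which are insensitive to rotation of $x_1$. Combined with the Polar Wedge Lemma \ref{polar wedge lemma}, which guarantees that lifting the decomposition of $\C^2$ by $\ell$ preserves piece types and rates, this gives the required rotational invariance of the fiber-graph rates, and hence of the inner contact computation.
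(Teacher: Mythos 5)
Your approach is essentially the paper's: establish the outer equality via Puiseux/braid considerations, and the inner one via the geometric decomposition together with the fact that its pieces are fibered over $\{h=t\}$ for \emph{complex} $t$. The one place you take a genuinely different route is the outer equality: you send everything to $\C^2$ via a generic projection $\ell'$ that fixes the $x_1$-coordinate and then invoke Theorem~\ref{generic projection bilipschitz} and the planar Puiseux characterization, whereas the paper stays in $\C^n$, replaces the round sphere $\mathbb S^{2n-1}_\epsilon$ by the ``ball with corners'' $\mathbb S^1_t\times B^{2n-2}_\alpha$, and observes that $\gamma\cap\{|x|=t\}$ is a uniform braid obtained by sweeping $\gamma\cap\{x=t\}$, $t\in\R^+$, around the circle. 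Both routes work; yours has the mild extra burden of showing a projection can simultaneously fix $x_1$ and be generic for $\gamma\cup\gamma'$ (true, because the limit secant lines of a curve germ form a proper subvariety of the space of directions, so a generic $(n-2)$-plane inside $\{x_1=0\}$ avoids them), while the paper's avoids any such existence claim.

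Two smaller remarks on the inner inequality. First, the case split ``$|\alpha-\beta|$ bounded away from $0$ / small'' is too coarse when $\max_{k,l}q_i>1$: there is an intermediate regime $\epsilon^{\max q_i-1}\ll|\alpha-\beta|\ll 1$ in which the perturbation cost $\Theta(\epsilon|\alpha-\beta|)$ is \emph{not} negligible next to $\epsilon^{\max q_i}$. In that regime you should instead conclude directly from $d_o(p,p')\ge|x_1(p)-x_1(p')|=\Theta(\epsilon|\alpha-\beta|)\ge\Theta(\epsilon^{\max q_i})$; so the split should be $\epsilon|\alpha-\beta|\lessgtr\epsilon^{\max q_i}$, not $|\alpha-\beta|\lessgtr$ a fixed constant. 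Second, the rotational uniformity you flag as the ``main obstacle'' does not need a separate argument about ``rates being insensitive to rotation of $x_1$'': it is already built into the definitions. A $B(q)$-piece is by Definition~\ref{def:Bq} a mapping torus fibered over the full disc $\{|h|<\eta\}$ with fibers of diameter $\Theta(|t|^q)$ uniformly in $\arg t$, and the Polar Wedge Lemma~\ref{polar wedge lemma} guarantees the lifted pieces of the decomposition of $(X,0)$ inherit this fibration. That is precisely the compressed point the paper makes when it says the pieces ``are fibered by their intersections with $\{x=t\}$, which shrink faster than linearly as $t\to 0$''. With those two adjustments your proof matches the paper's.
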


\begin{proof}  Assume $(X,0) \subset (\C^n,0)$ and call $(x,x_2,\ldots,x_n)$ the coordinates of $\C^n$. We can assume that the tangent lines at $0$ to $\gamma$ and $\gamma'$ are not included in the  hyperplane $\{x=0\}$.  Therefore the outer contact  $q_{out}(\gamma, \gamma')$ can be computing by taking  intersection with balls with corners $\Bbb S^1_t \times B^{2n-2}_\alpha$ with $\alpha>0$ sufficiently large instead of standard spheres $\Bbb S_t^{2n-1}$, i.e.,  we have $q_{out}(\gamma, \gamma') = ord_t \ d_o(\gamma \cap \{|x|=t\}, \gamma' \cap \{|x|=t\})$.

Moreover,  $\gamma$ and $\gamma'$ admit Puiseux expansions  $(x,x_2,\ldots,x_n) = (x, f_2(x), \ldots,f_n(x))$ with $f_i \in \C\{x^{1/r}\}$ and $(x,x_2\ldots,x_n) = (x, g_2(x), \ldots,g_n(x))$ with $g_i \in \C\{x^{1/s}\}$ respectively. Then the intersection $\gamma \cap \{|x|=t\}$ is a uniform braid obtained as the trajectory of the $r$ points $\gamma \cap \{x=t\}$ over the circle $\Bbb S^1_t = \{x=t e^{i \theta}\colon \theta \in \R\}$ through the Puiseux expansion, and we have the same for $\gamma'$. Therefore, $q_{out}(\gamma, \gamma')$ can be computed by measuring distances inside the hyperplane $\{x=t\}$, i.e., $q_{out}(\gamma, \gamma') = ord(t) \ d_o(\gamma \cap \{x_1=t\}, \gamma' \cap \{x=t\})$. Since by definition $\gamma \cap \{x=t\} = \{ \delta_k(t), k=1 \ldots,r \} $ and   $\gamma'\cap \{x=t\}  = \{ \delta'_l(t), l=1 \ldots,s \} $, we  then obtain $q_{out}(\gamma, \gamma') = \max_{k,l}  q_{out}(\delta_k, {\delta'_l})$. 

When working in $\C^2$, inner and outer distances coincide and we then have $q_{inn}(\gamma, \gamma') = \max_{k,l}  q_i(\delta_k, {\delta'_l})$. When $n > 2$,  then we can extend the above argument working with a  geometric decomposition of $(X,0)$. Consider  a generic  projection $\ell \colon (X,0) \to (\C^2)$ which is also generic for the curve $\gamma \cup \gamma'$ and let $\rho \colon Y \to \C^2$ be the minimal sequence of blow-ups of points which resolves the basepoints of the family of projected polar curves $(\ell(\Pi_{\cal D}))_{\cal D \in \Omega}$ and which resolves the curve $\ell(\gamma) \cup \ell(\gamma')$. Then consider the geometric decomposition of $(X,0)$ obtained by lifting by $\ell$ the pieces of the geometric decomposition associated with $\rho$. Then  $\gamma$ and $\gamma'$ are trajectories of the $r$ points  $\gamma \cap \{x=t\}$ and $\gamma' \cap \{x=t\}$ inside the $B$-pieces containing them, and since the pieces of the geometric decomposition are fibered by their  intersections with  $\{x=t\}$, which shrink faster than linearly when $t$ tends to $0$, then $q_{out}(\gamma, \gamma')$ can be computed by measuring inner distance between $ \gamma$ and $\gamma'$ inside the fibers  $ \widehat{F}_t= X \cap \{x=t\}$. This proves $q_{inn}(\gamma, \gamma') = \max_{k,l}  q_i(\delta_k, {\delta'_l})$. 
\end{proof}

\section{The LNE$_{test}$-resolution}

In this section, we construct a resolution $\mu_0 \colon W_0 \to X$ which will be used in the proof of  the ``only if" direction of Theorem 
  \ref{cor:complex characterization of normal embedding}. It has the following property:  it is a good resolution for every principal component over every nodal test curve for every  generic projection.
  
We use again the notations introduced in Section \ref{subsec:cal G-resolution}.  Let $\xi_{\ell} \colon X'_{\ell}  \to W_{\ell}$  be the morphism obtained by blowing down iteratively the exceptional $(-1)$-curves which are not on simple paths joining vertices of $G_0'$  (Definition \ref{def:G'}). We then obtain a resolution  $\mu_{\ell} \colon W_{\ell} \to X$ which factors through $\pi_0 \colon X_0 \to X$ by a morphism $\beta_{\ell} \colon W_{\ell} \to X_0$. 

\begin{lemma}  \label{lem:W_0}
The morphism $\beta_{\ell}$ does not depend on $\ell$.  We set $\beta_0 = \beta_{\ell}$, $W_0 = W_{\ell}$ and $\mu_0 = \mu_{\ell}$, so we have a commutative diagram: 
  $$\xymatrix{ 
   X'_{\ell}      \ar[dd]_{\widetilde{\ell}}   \ar[r]^{\xi_{\ell}}  &W_0  \ar[dr]_{\mu_0}   \ar[r]^{ \beta_0}   &X_0 \ar[d]_{\pi_0}   \\
  & & X  \ar[d]_{{\ell}}     \\
    Z_{\ell}\ar[rr]^{\rho_{\ell}} && \C^2   
 }$$
\end{lemma}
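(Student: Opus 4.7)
The plan is to pin down $W_\ell$ and $\beta_\ell\colon W_\ell\to X_0$ by an intrinsic characterization — one that refers only to $X$, $X_0$, and the subgraph $G_0'\subset G_0$, none of which depends on $\ell$. Once such a characterization is in place, the independence of $\beta_\ell$ from $\ell$ follows immediately, and the notations $W_0$, $\beta_0$, $\mu_0$ become well-defined.

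I would first verify that $\beta_\ell$ is indeed a morphism — equivalently, that $\xi_\ell$ does not contract the strict transform of any irreducible component of $\pi_0^{-1}(0)$. By construction $G_0'$ contains all $\mathcal L$- and $\mathcal P$-nodes, together with every vertex of $G_0$ on a simple path between two such nodes; the strict transforms of the corresponding exceptional components in $X_\ell'$ therefore sit on simple paths joining $G_0'$-vertices in the graph $G$ and are preserved by $\xi_\ell$. For a $G_0$-vertex lying outside $G_0'$, I would invoke the minimality of $X_0$ as the good resolution factoring through both the blow-up of the maximal ideal and the Nash modification to rule out such a vertex becoming a contractible $(-1)$-curve during the iteration. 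I would then establish the intrinsic characterization: $W_\ell$ is the minimal good resolution of $X$ factoring through $X_0$ in which every exceptional $(-1)$-curve lies on some simple path joining two vertices coming from $G_0'$. Existence is built into the construction of $\xi_\ell$ and minimality follows from its iterative nature; uniqueness up to canonical isomorphism over $X_0$ follows from a standard common-refinement argument, since any two such surfaces admit a minimal common further blow-up whose extra $(-1)$-curves necessarily lie off every $G_0'$-path, and contracting them canonically recovers both.

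The main obstacle is the first step: controlling how self-intersection numbers of $G_0$-vertices evolve during the iterated contraction, so as to guarantee no such vertex is inadvertently blown down. Here I would rely on the $\ell$-independence of the tree $T$ of $\rho_\ell$ (Remark \ref{rem:notations}) together with the genericity Conditions \ref{cond:generic1}--\ref{cond:generic3} of Proposition \ref{prop:generic}, which constrain the combinatorial shape of $X'_\ell\to X_0$ enough to make the required book-keeping feasible.
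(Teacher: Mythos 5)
Your route is genuinely different from the paper's. The paper observes that $\beta_\ell$ is a composition of blow-ups at double points of the successive exceptional divisors, and then invokes the equisingularity (in the sense of strong simultaneous resolution) of the families $(\Pi_{\mathcal D})_{\mathcal D\in\Omega}$ and $(\ell_{\mathcal D'}(\Pi_{\mathcal D}))_{\mathcal D,\mathcal D'\in\Omega\times\Omega}$ to conclude that the sequence of double points to blow up is the same for every generic $\ell$. You instead try to pin $W_\ell$ down by an $\ell$-free universal property.

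There is a concrete flaw in the universal property you propose. You characterize $W_\ell$ as the \emph{minimal} good resolution of $X$ factoring through $X_0$ in which every exceptional $(-1)$-curve lies on a simple path joining two vertices coming from $G_0'$. But $X_0$ itself factors (trivially) through $X_0$, and there is nothing preventing $X_0$ from having no $(-1)$-curve off $G_0'$-paths (e.g.\ no $(-1)$-curves at all). In that case $X_0$ is the minimal resolution with your property, yet $W_\ell\ne X_0$ whenever the blow-up $X'_\ell\to X_0$ introduces curves lying on $G_0'$-paths --- these are precisely the curves $\xi_\ell$ does \emph{not} contract. So your characterization, as stated, can return $X_0$ instead of $W_\ell$. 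The correct data telling you how far past $X_0$ to go is encoded in the family of projected polar curves resolved by $\rho_\ell$; that is exactly where the apparent $\ell$-dependence sits and why the paper needs equisingularity rather than a resolution-theoretic minimality statement over $X_0$ alone. Two further points remain unaddressed: your first step (that $\xi_\ell$ never blows down a strict transform of a $G_0$-curve, so that $\beta_\ell$ is a morphism) is only asserted to be ``feasible book-keeping''; and your uniqueness argument assumes the extra $(-1)$-curves of a minimal common refinement of $W$ and $W'$ must lie off every $G_0'$-path, which fails already when the refinement blows up a double point between two $G_0'$-path curves, since the new curve subdivides that path and is thus on it.
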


\begin{proof}   $\beta_{\ell}$ is a sequence of blow-ups of points which are all double points of the successive exceptional divisors. Since the families of curves $(\Pi_{\cal D})_{\cal D \in \Omega}$ and $(\ell_{\cal D'}(\Pi_{\cal D}))_{\cal D, \cal D'  \in \Omega \times \Omega}$ are equisingular in terms of strong simultaneous resolution, this sequence of double points does not depend on the choice of the generic $\ell$.  \end{proof}

\begin{definition} We call $\mu_0 \colon W_0 \to X$ the LNE$_{test}$-resolution of $(X,0)$. We denote by $\Gamma_0$ the graph of $\mu_0$ and by $\Gamma'_0$ the subgraph of $\Gamma_0$ which consists of the union of simple path joining $\cal L$- or $\cal P$-nodes. 
\end{definition}

By construction, we have the following characterization of  the principal components over nodal test curves:

\begin{lemma} \label{lem:W_0-2}  Let $\ell$ be a generic projection and let $\gamma$ be a nodal test curve for $\ell$. A component $\widehat{\gamma}$ of $\ell^{-1}(\gamma)$ is principal if and only if its strict transform by $\mu_0$ is a curvette of a component $E_v$ of $\mu_0^{-1}(0)$ such that $(v) \in V(\Gamma'_0)$.
\end{lemma}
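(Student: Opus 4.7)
The plan is to build a combinatorial dictionary between $G_0'$ and $\Gamma_0'$ based on how $\beta_0$ refines $G_0$ into $\Gamma_0$, and then compare the two definitions of ``principal.''

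By the proof of Lemma \ref{lem:W_0}, $\beta_0\colon W_0\to X_0$ is a sequence of blow-ups of double points of successive exceptional divisors. Each such blow-up subdivides an edge of the dual graph by inserting a new valency-two vertex, so $\Gamma_0$ is obtained from $G_0$ by replacing certain edges with chains, and there is a natural inclusion $V(G_0)\hookrightarrow V(\Gamma_0)$. Since $\beta_0$ only blows up double points, no new vertex corresponds to a component of the maximal-ideal blow-up or of the Nash modification, so the sets of $\cal L$- and $\cal P$-nodes in $\Gamma_0$ and $G_0$ coincide via this inclusion. Since every $\cal L$- or $\cal P$-node of $\Gamma_0$ is then an ``old'' vertex, contracting the chains inserted by $\beta_0$ identifies simple paths in $\Gamma_0$ joining $\cal L$/$\cal P$-nodes with simple paths in $G_0$ joining the same nodes. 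Hence $V(\Gamma_0')\cap V(G_0)=V(G_0')$, and a new vertex $w$ of $\Gamma_0$ lies in $V(\Gamma_0')$ if and only if it subdivides an edge $vv'$ of $G_0$ whose endpoints both lie in $V(G_0')$; a standard tree argument (two adjacent vertices of the convex hull of a vertex set span an edge of that hull) shows that such an edge $vv'$ is automatically an edge of $G_0'$.

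With this dictionary the two implications reduce to bookkeeping. For the ``only if'' direction, suppose $\widehat\gamma$ is principal. If the strict transform of $\widehat\gamma$ by $\pi_0$ is a curvette of $E_v$ in $X_0$ with $v\in V(G_0')$, the curvette meets $\pi_0^{-1}(0)$ at a smooth non-double point, where $\beta_0$ is a local isomorphism, so the strict transform by $\mu_0$ is a curvette of the corresponding $E_v$ in $W_0$ with $v\in V(G_0')\subset V(\Gamma_0')$. If instead the strict transform by $\pi_0$ passes through $E_v\cap E_{v'}$ with $v,v'\in V(G_0')$, I lift to $X'_\ell$: the strict transform of $\widehat\gamma$ is a curvette of some chain vertex $u$ in the subdivision of the edge $vv'$ inside the graph $G$ of $\pi_\ell$; because $v,v'\in V(G_0')$ this whole chain lies on a simple path joining $\cal L$/$\cal P$-nodes, so $u$ survives the blowdown $\xi_\ell$ and yields a new vertex of $\Gamma_0$ subdividing the edge $vv'\in E(G_0')$, which by the dictionary lies in $V(\Gamma_0')$. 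For the ``if'' direction, a curvette of $E_w$ in $W_0$ at an ``old'' $w\in V(G_0')$ descends via $\beta_0$ to a curvette of $E_w$ in $X_0$, giving condition (a) of Definition \ref{def:G'}; a curvette at a ``new'' $w$ descends through the iterated contractions of $\beta_0$ to a curve passing through the original double point $E_v\cap E_{v'}$ corresponding to the subdivided edge $vv'$, giving condition (b).

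The main technical obstacle is justifying in the ``only if'' direction that the strict transform of a principal component $\widehat\gamma$ in $X'_\ell$ really is a curvette of a single chain vertex, rather than a curve still tangent to several exceptional components. This uses the description of $\pi_\ell$ as factoring through the minimal good resolution $\alpha_\ell$ of the pull-back $X_\ell$: the preimage of the curvette of $\gamma$ in $Z_\ell$ acquires cyclic-quotient singularities along the exceptional fibers, whose resolution by $\alpha_\ell$ produces the chains in which $u$ sits, and the strict transform of $\widehat\gamma$ meets the resulting exceptional divisor transversally at a single smooth point of one of its components. Everything else is the combinatorics of the tree $\Gamma_0$ encoded in the dictionary above.
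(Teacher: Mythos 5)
The paper offers no argument for this lemma: it writes ``By construction'' and moves on. So you are not reproducing a written proof but supplying one that the authors deemed obvious. Your overall plan, to set up an explicit combinatorial dictionary between $G_0'$ and $\Gamma_0'$ using the fact that $\beta_0$ is a composition of blow-ups of double points and then check the two clauses of Definition \ref{def:G'}, is the natural way to make the assertion precise. The reductions you perform --- identifying the $\cal L$- and $\cal P$-nodes of $\Gamma_0$ with those of $G_0$, noting that $\beta_0$ is a local isomorphism at a smooth point of $\pi_0^{-1}(0)$, descending curvettes through the contracted chains --- are all correct and in the right place.

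There are, however, two points where the argument is thinner than what a complete proof requires. First, the step ``a standard tree argument (two adjacent vertices of the convex hull of a vertex set span an edge of that hull) shows that such an edge $vv'$ is automatically an edge of $G_0'$'' and your closing remark about ``the combinatorics of the tree $\Gamma_0$'' silently assume that $G_0$ (equivalently $\Gamma_0$) is a tree. For a general normal surface germ the dual resolution graph can contain cycles, so this is an unproven hypothesis; without it the implication ``$v,v'\in V(G_0')$ adjacent $\Rightarrow vv'\in E(G_0')$,'' which you need for the backward direction of the dictionary (hence for the ``only if'' direction of the lemma), is not established. If one restricts attention to trees (as in the rational case, which is the main application in \cite{NPP2}), this is a genuine and standard fact about Steiner trees, but as stated the lemma is for arbitrary normal surface germs, so the claim either needs a graph-theoretic proof valid with cycles or an explicit restriction.

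Second, you correctly flag ``the main technical obstacle'' that the strict transform of $\widehat{\gamma}$ in $X'_\ell$ should meet the exceptional divisor transversally at a smooth point of a single chain component, and sketch that this comes from resolving the cyclic-quotient singularities of the pullback $X_\ell$ along the fiber over $\gamma^*\cap C_i$. This is the right mechanism, but the sketch stops short of verifying it: the minimal good resolution of $X_\ell$ makes the exceptional divisor normal crossings but does not automatically separate the strict transform of a given curve from a double point of the new exceptional divisor, so one still has to use the specific local form of the pullback near $\gamma^*\cap C_i$ (or appeal to the resolution of the projected polar family resolving the relevant base points) to see that the strict transform lands at an interior point of some $E_u$. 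Since the lemma itself is used later in the paper precisely to assert the curvette property, a careful proof cannot leave this as a sketch.
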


\section{Proof of Theorem   \ref{cor:complex characterization of normal embedding}} \label{sec:proof main}

\begin{proof}[of the ``only if" direction of Theorem 
  \ref{cor:complex characterization of normal embedding}] 

    Assume $(X,0)$ is
  LNE. Let $\ell \colon (X,0) \to (\C^2,0)$ be
  a generic projection.  Let $\gamma_1$ and $\gamma_2$ be a pair of
  complex curves on $(X,0)$ such that $\ell(\gamma_1)=\ell(\gamma_2)$
  and $\ell(\gamma_i)^* \cap \Pi^* = \emptyset$. If $\gamma_1$ and $\gamma_2$ do not have a common tangent line, then $q_{out}(\gamma_1, \gamma_2) = 1$ and then $q_{inn}(\gamma_1, \gamma_2) = q_{out}(\gamma_1, \gamma_2) = 1$. If $\gamma_1$ and $\gamma_2$ have a common tangent line, let $\delta^{(1)}$ (resp.\ $\delta^{(2)}$) be a component of the real slice of $\gamma_1$ (resp.\ $\gamma_2$) as in Lemma \ref{lem:complex real rates}.   Since $(X,0)$ is LNE, then
  $q_i(\delta^{(1)}, \delta^{(2)}) = q_o(\delta^{(1)}, \delta^{(2)})$ by Theorem
  \ref{thm:arc criterion}, and then by  Lemma  \ref{lem:complex real rates} we get 
  $q_{inn}(\gamma_1,\gamma_2) = q_{out}(\gamma_1,\gamma_2)$ and
  Condition ($2^*$) is satisfied.

 We now consider a nodal test curve $(\gamma, 0)$ and a principal component $\widehat{\gamma}$  of $\ell^{-1}(\gamma)$. Let us  prove that Condition ($1^*$) is satisfied, i.e., that $mult(\widehat{\gamma}) =  mult(\gamma)$.
  
  Let $(i)$ be the node of $T$ such that $\gamma^*$ is a curvette of $C_{i}$.  The strict transform $\widehat{\gamma}^*$ of $\widehat{\gamma}$ by
 $ \mu_0$ is a curvette of an irreducible component $E$ of
 $ ( \mu_0)^{-1}(0)$ (Lemma \ref{lem:W_0-2}). Let us again denote by $E$ the irreducible curve in $X'_{\ell}$ which maps surjectively on $E$ by  $\xi_{\ell}$. Then we have $\widetilde{\ell} (E)=C_{i}$.
  
 Let $\ell' \colon (X,0) \to (\mathbb C^2,0)$ be another generic
 projection for $(X,0)$ which is also generic for the curve
 $\ell^{-1}(\gamma)$ (Definition \ref{def:generic projection
   curve}).  By  Lemma \ref{lem:W_0}, $\widetilde{\ell'}(E)$ is  the component  $C'_{i}$ of $\rho_{\ell'}^{-1}(0)$ corresponding to the node $(i)$ of $T$.  We then have 
   $q_{C_i} = q_{C'_i}$.% (Proposition \ref{rk:inner rate} and Definition \ref{def:inner rate2}).
      
   The strict transform of $\ell'(\widehat{\gamma})$ by $\rho_{\ell'}$
   intersects $C'_{i}$ at a smooth point $p$ of $(\rho_{\ell'})^{-1}(0)$.  Let
   $\gamma'$ be the $\rho_{\ell'}$-image of a curvette of $C'_{i}$ which meets
   $C'_{i}$ at a point distinct from $p$. We then have
   $q_{out}(\ell'(\widehat{\gamma}), \gamma') = q_{C'_i} = q_{C_i}$, so there are
   Puiseux expansions of $\ell'(\widehat{\gamma})$ and $\gamma'$ which
   coincide for exponents $<q_{C_i}$ and which have distinct coefficients
   for $x^{q_{C_i}}$. 

Assume that  $mult(\widehat{\gamma}) \neq  mult(\gamma)$, i.e.,  $\widehat{\gamma}$ does not  satisfy Condition   ($1^*$).
  Then $mult(\widehat{\gamma}) = k\ mult(\gamma)$ where $k$ is the degree of the restriction $\ell \mid_{\widehat{\gamma}} \colon (\widehat{\gamma},0) \to
   (\gamma,0)$. Since $\ell'$ is a generic projection for
$\widehat{\gamma}$, then it is  a bilipschitz
 homeomorphism from  $\widehat{\gamma}$ to
   $\ell'( \widehat{\gamma})$ for the outer metric (Theorem
   \ref{generic projection bilipschitz}) so these two curves have
 same multiplicity (\cite{PT}). Therefore we have
 $mult(\ell'(\widehat{\gamma})) = k \ mult(\gamma)$.  Since the strict transforms of $\gamma$ and ${\gamma'}$ by $\rho_{\ell}$ and  $\rho_{\ell'}$  are curvettes of $C_{i}$ and $C'_{i}$ respectively and since these exceptional curves correspond to the same node $(i)$ of $T$,  then 
 $mult(\gamma) = mult(\gamma')$. We therefore obtain
 $mult(\ell'(\widehat{\gamma}))= k \ mult(\gamma') $. Since
 ${\gamma'}^*$ is a curvette of $C'_{i}$, then all the characteristic
 Puiseux exponents of $\gamma'$ are $\leq q_{C_{i}}$. Since the Puiseux
 expansions of $\gamma'$ and $\ell'(\widehat{\gamma})$ coincide up to
 exponent $q_{C_{i}}$, we then obtain that $\ell'(\widehat{\gamma})$
 admits a characteristic exponent $q > q_{C_{i}}$.  After change of
 coordinates if necessary, we can assume that $\ell = (x,y)$,
 $\ell' = (x,z)$ and that $\gamma$ is tangent to the $x$-axis. We 
  consider the real slices of the
 curves by intersecting them with $\{x=t \in \R^+\}$. Since $q$ is a characteristic exponent of
 $\ell'(\widehat{\gamma})$, there exists two real arcs
 $t \mapsto p_1(t) $ and $t \mapsto p_2(t) $ in the real slice of 
 $\ell'(\widehat{\gamma})$ such that
 $d_{o}(p_1(t),p_2(t)) = \Theta(t^{q})$. Lifting $p_1$ and $p_2$ by
 $\ell'$, we obtain two real arcs $t \mapsto \delta_1(t)$ and
 $t \mapsto \delta_2(t)$ inside the real slice of $\widehat{\gamma}$. Since
 $\ell'\mid_{\widehat{\gamma}} \colon \widehat{\gamma} \to
 \ell'(\widehat{\gamma})$ is a bilipschitz map for the outer metric,
 then $q_{o}(\delta_1,\delta_2) =q_{o}(p_1,p_2) = q$.
 
 We claim $\ell(\delta_1(t)) = \ell(\delta_2(t))$.  Indeed, assume the
 contrary and let $q'$ be the rational number defined by
 $d_o(\ell(\delta_1(t)) , \ell(\delta_2(t))) = \Theta(t^{q'})$.
 Since $\ell$ is the restriction of a linear projection, we
   have $q' \geq q$ and then $q' > q_{C_{\nu}}$. On the other hand,
 $\ell(\delta_1)$ and $ \ell(\delta_2)$ are distinct components of the
 real slice $\gamma \cap \{x=t\}$, therefore $q'$ is one of the
 characteristic exponents of $\gamma$. So we have $q' \leq q_{C_{i}}$. Contradiction.

 Since $\ell(\delta_1) = \ell(\delta_2)$, then $\delta_1$ and
 $\delta_2$ are tangent to the same semi-line and we get
 $q_{i}(\delta_1,\delta_2) = q_{C_{\nu}}$ by Lemma
 \ref{lem:inner X}.  Since $q>q_C$, then
 $q_{i}(\delta_1,\delta_2) < q_o (\delta_1,\delta_2)$ so the pair
 $(\delta_1,\delta_2) $ does not satisfy the arc criterion (Theorem
 \ref{thm:arc criterion}) and $(X,0)$ is not LNE. Contradiction.
\end{proof}

 The commutative diagram  $\cal E  \circ \widehat{\cal C}=\widehat{\cal E} \circ \cal C$ has the following remarkable property which will be used in
  the proof of the ``if'' direction of Theorem \ref{cor:complex characterization of normal
    embedding}
    
\begin{lemma}
 \label{rk:monodromy} Let $m$ be the multiplicity of $(X,0)$. There exists an integer $k>0$, such that 
 \begin{enumerate}
\item  ${\cal E}\colon \cal F \to T$ is the quotient map of a  cyclic action of $\Z/k\Z$ on the graph $  {\cal F}$;
\item   $\widehat{\cal E}\colon \widehat{\cal F} \to \cal G$ is the quotient map of a  cyclic action of $\Z/km\Z$ on the graph $\widehat {\cal F}$;
\item the graph-map $\widehat{\cal C} \colon  \widehat {\cal F}  \to  {\cal F} $ is equivariant for these actions. 
\end{enumerate}
\end{lemma}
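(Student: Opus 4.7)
The strategy is to realize both cyclic actions as monodromies of the fibration induced by the generic linear form $h\colon (\C^2,0)\to (\C,0)$ used to build the geometric decomposition, lifted via $\ell$ to a fibration $h\circ\ell\colon (X,0)\to (\C,0)$.

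\emph{Step 1: construction of the $\Z/k\Z$-action on $\cal F$.} Fix a small circle $\Bbb S^1_r=\{t\in \C:|t|=r\}$. The geometric decomposition of $F_t$ varies continuously with $t\in \Bbb S^1_r$, and I would show that traversing $\Bbb S^1_r$ once counterclockwise induces a self-homeomorphism $\mu\colon F_r\to F_r$ preserving the decomposition and thus yielding a graph automorphism of $\cal F$. Writing each inner rate $q_i$ as $p_i/n_i$ in lowest terms, every component of $B_i\cap F_t$ is centered on a point of the form $at^{q_i}(1+o(1))$ corresponding to a Galois conjugate of a curvette branch of $C_i$, and these $n_i$ conjugates are cyclically permuted by $\mu$ with period $n_i$. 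Taking $k$ to be (a multiple of) $\lcm_i n_i$, the induced automorphism of $\cal F$ has order dividing $k$, and its orbits collect precisely the components over each $(i)\in V(T)$; this gives the $\Z/k\Z$-action with quotient $T$.

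\emph{Step 2: construction of the $\Z/(km)\Z$-action on $\widehat{\cal F}$.} The map $h\circ\ell$ is a locally trivial fibration near $0$ away from the critical values, with fiber $\widehat F_t=\ell^{-1}(F_t)$, an $m$-sheeted branched cover of $F_t$ branched along $\Delta\cap F_t$. Rotating $t$ once around $\Bbb S^1_r$ induces a monodromy $\widehat\mu\colon\widehat F_r\to\widehat F_r$ covering $\mu$ via $\ell$ and preserving the decomposition, hence acting on $\widehat{\cal F}$. The key counting: for each vertex $V\in V(\cal G)$ lying over $(i)\in V(T)$, the fiber $\widehat{\cal E}^{-1}(V)$ consists of components of $\widehat F_t\cap V$, whose cardinality divides $m\cdot|\cal E^{-1}(i)|$ (since $V\to B_i$ is a cover of degree at most $m$ on each component of $B_i\cap F_t$), and hence divides $km$ because $|\cal E^{-1}(i)|$ divides $k$. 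Composing the base monodromy $\mu$ with the sheet monodromy of the $m$-fold cover produces a cyclic automorphism of $\widehat{\cal F}$ whose orbits are exactly the fibers $\widehat{\cal E}^{-1}(V)$; this realizes the desired $\Z/(km)\Z$-action with quotient $\cal G$.

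\emph{Step 3: equivariance.} The map $\widehat{\cal C}\colon\widehat{\cal F}\to\cal F$ is induced by $\ell$, and by construction $\widehat\mu$ covers $\mu$ under $\ell$. Thus on combinatorial graphs we have $\widehat{\cal C}\circ\widehat\mu=\mu\circ\widehat{\cal C}$, which is exactly the equivariance statement under the natural surjection $\Z/(km)\Z\twoheadrightarrow\Z/k\Z$.

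\textbf{Main obstacle.} The delicate step is verifying that the combined monodromy $\widehat\mu$ generates a \emph{single cyclic group} of order exactly $km$, and not merely gives orbit sizes dividing $km$ via an a priori non-cyclic combination of base and sheet permutations. Equivalently, one must exhibit a generator $\widehat\mu$ on $\widehat{\cal F}$ whose $(km)$-th power is the identity and whose action on each fiber of $\widehat{\cal E}$ is transitive. This requires a local analysis of the branched cover $\widehat F_t\to F_t$ near the $\Delta$-curves in $Z_\ell$, invoking the equisingularity of the polar family (Proposition \ref{prop:generic}) to pin down the sheet monodromy and the Polar Wedge Lemma \ref{polar wedge lemma} to handle the pieces over $\Pi$. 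The cleanest realization is via the carrousel decomposition of $(X,0)$ lifted from that of $(\C^2,0)$, whose inherent cyclic symmetry built from Puiseux expansions of the discriminant yields the $\Z/(km)\Z$-action directly.
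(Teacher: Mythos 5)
Your strategy is in the same spirit as the paper's: both realize the cyclic actions as monodromies of the Milnor fibration of $h$ on $(\C^2,0)$ and of $h\circ\ell$ on $(X,0)$. However, you yourself identify the exact point at which your argument is incomplete, and it really is a gap. The a priori monodromy of a Milnor fibration is only well defined up to isotopy, and a generic representative of its isotopy class has no reason at all to permute the pieces of the geometric decomposition, let alone to do so cyclically with a controlled order. Your ``key counting'' in Step 2 only bounds the sizes of the fibers of $\widehat{\cal E}$; it does not produce a single finite-order homeomorphism whose orbits are exactly those fibers, nor does it explain why its order is precisely $km$ rather than some other multiple. The phrase ``composing the base monodromy with the sheet monodromy'' is also misleading, since the monodromy of $h\circ\ell$ is a single homeomorphism of $\widehat F_t$ covering that of $F_t$; it is not a product of two commuting permutations that one could analyze separately.

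What the paper does to close exactly this gap is to invoke the explicit construction of Du Bois--Michel (\cite{DBM}) of a \emph{quasi-periodic} representative $\phi$ of the monodromy adapted to a chosen resolution: $\phi$ preserves the pieces $B_v\cap F_t$ and $A_{v,v'}\cap F_t$, permutes the components over each vertex and edge of $T$ cyclically, and has finite order $k$ modulo Dehn twists on the annular pieces. Applying the same construction to $h\circ\ell$ and the resolution $\pi_\ell$ gives directly a quasi-periodic representative of order $km$ on $\widehat F_t$, whence the $\Z/km\Z$-action on $\widehat{\cal F}$ and the equivariance of $\widehat{\cal C}$. This is precisely the ``local analysis'' you gesture at in your final paragraph. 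Your suggestion to use a carrousel decomposition would likely lead to a correct alternative route (carrousels were the original source of these quasi-periodic models), but as written you have outlined the problem and not solved it; the Du Bois--Michel reference, or an equivalent explicit construction of a quasi-periodic monodromy adapted to $\rho_\ell$ and $\pi_\ell$, is the missing ingredient.
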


 \begin{proof} Set $\ell=(x,y)$ where $h=x$ is a generic linear form on $(X,0)$. By the method described in \cite[Section 1]{DBM}, one can construct, using the resolution $\rho_{\ell}$, a quasi-periodic representative  $\phi \colon {F}_t \to {F}_t $  of the monodromy of the Milnor fibration of $h$ which has the following property:  for each vertex $(v)$ of $T$, $\phi$ exchanges cyclically the connected components of
  $B_v \cap {F}_t$, for each edge $(v)-(v')$ of $T$,   $\phi$ exchanges cyclically the annular connected components of
  $A_{v,v'} \cap {F}_t$, and there is an integer  $k >0$ such that $\phi^k$ is the identity on $B_v \cap {F}_t$ and a Dehn twist on each annulus component of $A_{v,v'} \cap {F}_t$. This action induces a  cyclic action of $\Z/k\Z$ on the graph $  {\cal F} $  whose quotient map  is ${\cal E}\colon \cal F \to T$. 
  
 This method  to construct $\phi$ is presented in \cite{DBM} in the case of the Milnor
  fibration of a holomorphic function germ $f \colon (\C^2,0) \to (\C,0)$, but the same method applies in 
  the more general setting of a holomorphic function
  $ (X,0) \to (\C,0)$ where $(X,0)$ is a normal surface
  germ. Applying this method to  the holomorphic function $h \circ \ell \colon (X,0) \to (\C)$ and to the resolution $\pi_{\ell} \colon  X'_{\ell} \to X$ (where $h=x$), we construct a  quasi-periodic  representative  $\phi \colon \widehat{F}_t \to \widehat{F}_t$ with order $km$ of the monodromy of  $h \circ \ell $ which induces a  cyclic action of $\Z/km\Z$ on the graph $  \widehat{\cal F} $  whose quotient map  is $\widehat{\cal E}\colon \widehat{\cal F} \to \cal G$. 
  
By construction,  the graph-map $\widehat{\cal C} \colon  \widehat {\cal F}  \to  {\cal F} $ is equivariant for these actions. 
 \end{proof}

 \begin{proof}[of the ``if'' direction of Theorem 
  \ref{cor:complex characterization of normal embedding}]
  Assume that Conditions ($1^*$) and ($2^*$) are satisfied (we will prove at the end that Condition  ($2^*$) is not needed for test curves at $\Delta$-nodes).  First, notice that  Condition  ($1^*$) implies that for all nodal test curve $\gamma$ and all principal component $\widehat{\gamma}$ of $\ell^{-1}(\gamma)$, the restriction $\ell \mid_{\widehat{\gamma}} \colon \widehat{\gamma} \to \gamma$ has degree $1$, i.e., it is bijective. Let  $\delta$  be a nodal test arc and let  $\delta_1$ and $\delta_2$ be  two distinct  principal components of $\ell^{-1}(\delta)$. Let $\gamma$ be the nodal test curve such that $\delta$ is a real slice of $\gamma$ and let $\gamma_1$ and $\gamma_2$ be the two components of $\ell^{-1}(\gamma)$ containing respectively $\delta_1$ and $\delta_2$.   Let $(\nu_1)$ and $(\nu_2)$ be the two vertices of $\widehat{\cal F} $ such that $\delta_i(t) \in \widehat{F}_{t,\nu_i}$.

\vskip0,2cm\noindent   
{\bf Case 1.} Assume  that $q_{\nu_1, \nu_2} = q_j$. This implies  $q_i(\delta_1, \delta_2) = q_j$ by Lemma  \ref{lem:inner X}. Let $\delta'_1$ (resp.\ $\delta'_2$) be a component of the real slice $\gamma_1 \cap \{x=t\}$ (resp.\ $\gamma_2 \cap \{x=t\}$).  

Assume  that  $\ell(\delta'_1) = \ell(\delta'_2)$ and let $(\nu'_1)$ and $(\nu'_2)$ be the two vertices of $\widehat{\cal F} $ such that $\delta'_i(t) \in \widehat{F}_{t,\nu'_i}$.  By condition ($1^*$), if $\delta'_1$ is a real slice  $\gamma_1\cap \{x=t\}$, there is a unique component $\delta'_2 $ of the real slice  $\gamma_2 \cap \{x=t\}$ such that $\ell(\delta'_1) = \ell(\delta'_2)$, and by Lemma \ref{rk:monodromy}, we have $q_{\nu_1, \nu_2} =  q_{\nu'_1, \nu'_2}$. Therefore,    $q_i(\delta'_1, \delta'_2) =  q_i(\delta_1, \delta_2) =q_j$ by Lemma \ref{lem:inner X}.

 Assume now that $\ell(\delta'_1) \neq \ell(\delta'_2)$.  Let $S_t$ be
the segment in $\C^2$ joining $\ell(\delta'_1)$ to $\ell(\delta'_2)$
and let $\delta''_2(t)$ be the extremity of the lifting of $S_t$ with
origin $\delta'_1(t)$. Then $\delta''_2$ is a component of the real
slice $\gamma_2 \cap \{x=t\}$ such that
$\ell(\delta''_2) = \ell(\delta'_2)$, and using the arguments of the
proof of Lemma \ref{liftingoftestarcs}, we have
$q_o(\delta'_1, \delta'_2) = \min(q_i(\delta'_1, \delta''_2),
q_o(\delta''_2, \delta'_2))$.
Then a fortiori
$q_i(\delta'_1, \delta'_2) \leq q_o(\delta'_1,\delta'_2)\leq  q_i(\delta'_1, \delta''_2)$. Since
$\ell(\delta'_1)$ and $\ell(\delta''_2) = \ell(\delta'_2)$ are
distinct component of a real slice of $\gamma$, then
$q_i(\ell(\delta'_1), \ell(\delta'_2))$ equals one of the essential
Puiseux exponents of $\gamma$, which are all $\leq q_j$. Since 
$q_i(\delta'_1, \delta''_2) \le q_i(\ell(\delta'_1),
\ell(\delta''_2))$, then 
$q_i(\delta'_1, \delta'_2) \leq q_i(\ell(\delta'_1), \ell(\delta'_2))$,
so we then obtain $q_i(\delta'_1, \delta'_2) \leq q_j$.  Therefore,
$q_{inn}(\gamma_1, \gamma_2) = q_i(\delta_1, \delta_2)$ (cf.\ Proposition \ref{lem:complex real rates}). Using Condition $(2*)$, we then have
$q_o(\delta_1, \delta_2) \leq q_{out}(\gamma_1, \gamma_2) = q_{inn}(\gamma_1,\gamma_2) = q_i(\delta_1, \delta_2)$.
Therefore $q_o(\delta_1, \delta_2) \leq q_i(\delta_1, \delta_2)$, and
then $q_o(\delta_1, \delta_2)= q_i(\delta_1, \delta_2)$ since the converse inequality is always true.

\vskip0,2cm\noindent
{\bf Case 2.} Assume now that $q_{\nu_1\nu_2}<q_j$ and consider a simple path in $\widehat {\cal F}$ from $(\nu_1)$ to $(\nu_2)$ such that $q_{\nu_1, \nu_2}$  is the minimal inner rate  along it and let  $(\nu)$ be the vertex on it such that $q_{\nu}=q_{\nu_1, \nu_2}$. Set $(j') = (\cal E \circ \widehat{\cal C})(\nu)$. For any test curve $\gamma'$ at $(j')$ and any test arc $\delta' \subset \gamma'$, we can apply Case 1 to any pair of $\delta'_1, \delta'_2$  over $\delta'$ such that $\delta'_1(t)$ and  $\delta'_2(t)$ are in $\widehat{\cal F}_{t,\nu}$ and we get $q_i(\delta'_1, \delta'_2) = q_o(\delta'_1, \delta'_2)$. Then using Lemma \ref{lem:partner3}, we conclude  $q_i(\delta_1, \delta_2) = q_o(\delta_1, \delta_2)$.
\end{proof}

 \section{Inner contacts on a normal surface and resolution}
 
 In this last section, we state and prove an analog of  Lemma \ref{lem:inner X} in terms of complex curves and resolution which is very useful to compute inner rates between complex curves  on a normal surface germ using resolution and then to  check  condition ($2^*$) of Theorem  \ref{cor:complex characterization of normal embedding}.  First, let us introduce a generalization of the inner rates of Definition \ref{def:inner rate}. 

\begin{lemma} \label{rk:inner rate} Let $\pi \colon X' \to X$ be a resolution of $X$ and let  $E$ be an irreducible component of the exceptional divisor $\pi^{-1}(0)$. Let $\gamma$ and $\gamma'$ be two complex curve germs in $(X,0)$ whose strict transforms by $\pi$ are curvettes of $E$ meeting $E$ at two distinct points.  Then  $q_{inn}(\gamma, \gamma')$  is independent of the choice of $\gamma$ and $\gamma'$.
  \end{lemma}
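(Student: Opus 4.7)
The plan is to reduce the lemma to the already-established real-arc setting of Lemma~\ref{lem:inner X}, via Proposition~\ref{lem:complex real rates}. If $\gamma$ and $\gamma'$ are tangent to distinct lines at $0$, then $q_{inn}(\gamma,\gamma')=1$ and the conclusion is immediate, so I will assume they share a common tangent line, which I will take to be the $x$-axis in suitable coordinates.

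First, I would choose a generic projection $\ell\colon(X,0)\to(\C^2,0)$ which is simultaneously generic for the curve $\gamma\cup\gamma'$ in the sense of Definition~\ref{def:generic projection curve}; by Theorem~\ref{generic projection bilipschitz}, $\ell$ then restricts to an outer-bilipschitz homeomorphism on $\gamma\cup\gamma'$. Next, I let $\rho\colon Y\to\C^2$ be the minimal composition of point blow-ups that both resolves the base points of the generic polar family $(\ell(\Pi_{\cal D}))_{\cal D\in\Omega}$ and resolves the plane curve $\ell(\gamma\cup\gamma')$, and let $\pi'\colon X'\to X$ be the induced (good) resolution. Since $q_{inn}(\gamma,\gamma')$ is intrinsic to $\gamma$ and $\gamma'$, I may replace $\pi$ with a common refinement of $\pi$ and $\pi'$; after this replacement $E$ is an irreducible component of $(\pi')^{-1}(0)$ of which $\gamma^*$ and ${\gamma'}^*$ are curvettes meeting $E$ at distinct smooth points of the exceptional divisor.

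Applying Proposition~\ref{lem:complex real rates} then yields
\[
q_{inn}(\gamma,\gamma')=\max_{k,l}\,q_i(\delta_k,\delta'_l),
\]
where $\delta_1,\dots,\delta_r$ and $\delta'_1,\dots,\delta'_s$ are the real-slice components of $\gamma\cap\{x=t\}$ and $\gamma'\cap\{x=t\}$. Each $\delta_k$ lies in a unique vertex $(\nu_k)$ of the fiber graph $\widehat{\cal F}$ attached to $\ell$ and $\rho$, and likewise each $\delta'_l$ lies in some $(\nu'_l)$; Lemma~\ref{lem:inner X} gives $q_i(\delta_k,\delta'_l)=q_{\nu_k,\nu'_l}$. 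It then suffices to prove that the multiset $\{(\nu_k)\}$ depends only on $E$ and not on the particular curvette $\gamma$, and similarly for $\{(\nu'_l)\}$.

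This last identification is the main obstacle. The plan is to exploit that $\gamma^*$ meets $E$ at a smooth point $p$ of $(\pi')^{-1}(0)$, so the composition $\widetilde\ell\colon X'\to Y$ of Section~\ref{subsec:cal G-resolution} is an unramified finite cover in a neighbourhood of $p$, sending $\gamma^*$ to a curvette of the exceptional curve $C=\widetilde\ell(E)\subset\rho^{-1}(0)$. One then identifies the real-slice components $\delta_k$ with the vertices of $\widehat{\cal F}$ whose image under $\cal E\circ\widehat{\cal C}$ is the vertex of the resolution tree $T_0$ representing $C$; this preimage is independent of the chosen smooth point of $E$. The identical description applies to $\{(\nu'_l)\}$, so both multisets coincide with $(\cal E\circ\widehat{\cal C})^{-1}(C)$, an invariant of $E$ alone. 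Hence $\max_{k,l}q_{\nu_k,\nu'_l}$ depends only on $E$, yielding the lemma once the combinatorial identification between real-slice components of a complex curvette and fiber-graph vertices is made precise.
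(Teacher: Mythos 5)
Your opening moves (choosing $\ell$ generic also for $\gamma\cup\gamma'$, taking $\rho$ to resolve both the projected polar family and $\ell(\gamma\cup\gamma')$, invoking Theorem~\ref{generic projection bilipschitz}) are exactly what the paper does, and the reduction to real slices via Proposition~\ref{lem:complex real rates} is legitimate. But the route you then take through Lemma~\ref{lem:inner X} and the fiber graph $\widehat{\cal F}$ leads to a genuine gap, which you honestly flag as "the main obstacle" without closing it.

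The combinatorial identification you sketch in the last paragraph is not correct as stated. You claim the multiset $\{(\nu_k)\}$ of fiber-graph vertices met by the real slices of $\gamma$ coincides with $(\cal E\circ\widehat{\cal C})^{-1}(j)$, where $(j)\in V(T_0)$ represents $C=\widetilde\ell(E)$. That preimage collects the vertices over \emph{every} exceptional curve of $\pi^{-1}(0)$ that $\widetilde\ell$ sends to $C$, not only over $E$; since $\gamma$ is a curvette of $E$ alone, its real slices live only in the components lying over $E$. The correct target is at most $\widehat{\cal E}^{-1}(v)$ with $(v)\in V(\cal G)$ the vertex corresponding to $E$, and even there the real slices of a single curvette trace out one cyclic monodromy orbit rather than necessarily the whole fiber of $\widehat{\cal E}$. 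Moreover, even granting that the vertex sets are pinned down, you still have to show $\max_{k,l} q_{\nu_k,\nu'_l}=q_C$: distinct vertices of $\widehat{\cal E}^{-1}(v)$ need not be adjacent in $\widehat{\cal F}$, so a priori paths between them could dip below $q_C$, and you neither produce a pair in the same component nor bound the max from above.

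The paper sidesteps all of this. It simply observes that the $B(q_C)$-piece $\rho(\cal N(C))\subset\C^2$ lifts via $\ell$ to a union of $B(q_C)$-pieces, one of which is $\pi(\cal N(E))$. Both $\gamma$ and $\gamma'$ sit inside this single piece, meeting $E$ at two distinct points, so their fibers at level $t$ are at inner distance $\Theta(t^{q_C})$; hence $q_{inn}(\gamma,\gamma')=q_C$, a quantity depending only on $E$ (through $C=\widetilde\ell(E)$), not on the particular curvettes. No fiber-graph bookkeeping is needed. If you want to salvage your approach, the clean way is to replace the combinatorics you attempt with the geometric observation that all $\delta_k(t),\delta'_l(t)$ live in $\pi(\cal N(E))\cap\{x=t\}$, a set of diameter $\Theta(t^{q_C})$, while the two families stay a bounded fraction of that diameter apart because $\gamma^*\cap E\neq{\gamma'}^*\cap E$.
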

  
  \begin{definition}  \label{def:inner rate2} We set $q_E = q_{inn}(\gamma, \gamma')$ and we call $q_E$ the inner rate of $E$. 
 \end{definition}
 
 \begin{proof} Let $\ell \colon (X,0) \to (\C^2,0)$ be a generic projection for $(X,0)$ which is also generic for the curve $\gamma \cup \gamma'$  and let $\rho \colon Y \to \C^2$ be the minimal sequence of blow-ups defined as in the proof of Proposition \ref{lem:complex real rates}.  There is a component  $C$ of $\rho^{-1}(0)$ which intersect both  $\ell(\gamma)^*$ and $\ell(\gamma')^*$.  Let $q_C$ be its inner rate. Then the piece $\rho(\cal N(C))$ of the associated geometric decomposition of $\C^2$ lifts by $\ell$ to a union of $B(q_C)$-pieces, one of them being  of the form $\pi(\cal N (E))$. Therefore, $q_{inn}(\gamma, \gamma')=q_C$.   \end{proof}

We then have the following consequence of Proposition \ref{lem:complex real rates} and  Lemma \ref{lem:inner X}. 

 \begin{proposition} \label{lem:inner contact computation} Let $\gamma$ and $\gamma'$ be two complex curves on $(X,0)$.  Consider a  resolution  $\pi \colon X' \to X$    which factors through the Nash modification and through the blow-up of the maximal ideal  and which is a resolution of the complex curve $\gamma \cup \gamma'$ and set $\pi^{-1}(0) = \bigcup_{v}E_v$.  Let $\widetilde{G}$ be the resolution graph of $\pi$ whose vertices $(v)$ are weighted by the inner rates introduced in Definition \ref{def:inner rate2}. Let  $(v)$ and $(v')$ be the vertices of $G$ such that $\gamma^* \cap E_{v} \neq \emptyset$ and    ${\gamma'}^* \cap E_{v'} \neq \emptyset$.  Then $q_{inn}(\gamma, \gamma')= q_{v,v'}$ where   $q_{v,v'}$ is the   maximum among minimum of inner rates  along paths from $(v)$ to $(v')$ in the graph $\widetilde{G}$. 
 \end{proposition}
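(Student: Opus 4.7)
\textbf{Proof plan for Proposition \ref{lem:inner contact computation}.} The strategy is to reduce the complex inner-contact computation to a purely combinatorial comparison between the fiber graph $\widehat{\cal F}$ and the resolution graph $\widetilde G$, by combining Proposition \ref{lem:complex real rates} with Lemma \ref{lem:inner X}. First I would dispose of the easy case where $\gamma$ and $\gamma'$ have distinct tangent lines: then $q_{inn}(\gamma,\gamma')=1$, and since $\pi$ factors through the blow-up of the maximal ideal, $\gamma^*$ and ${\gamma'}^*$ meet components $E_v,E_{v'}$ lying over distinct points of the exceptional divisor of this blow-up, so any simple path in $\widetilde G$ from $(v)$ to $(v')$ must cross an $\cal L$-node of rate $1$, giving $q_{v,v'}=1$. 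Next I would argue that $q_{v,v'}$ does not depend on the particular resolution $\pi$ chosen: blowing up a smooth point of an exceptional curve only creates a leaf which cannot lie on any simple path between two previously-existing vertices, while blowing up an intersection point $E_i\cap E_j$ replaces the edge $(i)$--$(j)$ by a path $(i)$--$(w)$--$(j)$ where the inner rate $q_w$ lies between $q_i$ and $q_j$, so the minimum rate along any path through this segment is unaffected. This allows me to enlarge $\pi$ to a common refinement with the resolution $\pi_\ell$ attached to any chosen generic projection $\ell$.

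Now assume $\gamma$ and $\gamma'$ share a tangent direction. Pick a generic projection $\ell\colon(X,0)\to(\C^2,0)$ which is also generic for the curve $\gamma\cup\gamma'$, choose coordinates so that $\ell=(x_1,x_2)$ with $x_1$ transverse to that common tangent, and refine $\pi$ through $\pi_\ell$. Let $\delta_k$ (resp.\ $\delta'_l$) be the real slice components of $\gamma\cap\{x_1=t\in\R^+\}$ (resp.\ of $\gamma'$), and let $(\nu_k), (\nu'_l)$ be the vertices of $\widehat{\cal F}$ such that $\delta_k(t)\in\widehat F_{t,\nu_k}$ and $\delta'_l(t)\in\widehat F_{t,\nu'_l}$. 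Since the strict transforms of $\gamma$ and $\gamma'$ by $\pi_\ell$ meet $E_v$ and $E_{v'}$ respectively, every $(\nu_k)$ projects under $\widehat{\cal E}$ to $(v)$ and every $(\nu'_l)$ projects to $(v')$. Proposition \ref{lem:complex real rates} gives
\[
q_{inn}(\gamma,\gamma') \;=\; \max_{k,l} q_i(\delta_k,\delta'_l),
\]
and Lemma \ref{lem:inner X} then identifies each summand as $q_i(\delta_k,\delta'_l)=q_{\nu_k,\nu'_l}$, where $q_{\nu_k,\nu'_l}$ denotes the maximum over paths in $\widehat{\cal F}$ of the minimum inner rate along the path.

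The remaining—and principal—task is the combinatorial identity
\[
\max_{k,l} q_{\nu_k,\nu'_l} \;=\; q_{v,v'}
\]
between max-min values in $\widehat{\cal F}$ and in $\widetilde G$. The inequality $\leq$ is immediate: a simple path in $\widehat{\cal F}$ from $(\nu_k)$ to $(\nu'_l)$ maps under the rate-preserving graph map $\widehat{\cal E}$ to a walk in $\widetilde G$ from $(v)$ to $(v')$ with the same sequence of inner rates, and any such walk contains a simple path whose minimum rate is at least the minimum rate of the walk. For the reverse inequality $\geq$, the key input is Lemma \ref{rk:monodromy}: the map $\widehat{\cal E}\colon\widehat{\cal F}\to{\cal G}$ is the quotient by a cyclic action of $\Z/km\Z$ and is therefore a (rate-preserving) graph covering, which enjoys path lifting. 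Starting from a path in $\widetilde G$ realizing $q_{v,v'}$, restricted to the subgraph $\cal G$ (which contains $(v)$ and $(v')$ by construction of the refinement), I lift it to a path in $\widehat{\cal F}$ beginning at some $(\nu_k)$ over $(v)$ and ending at some $(\nu'_l)$ over $(v')$, with identical rate sequence, giving $q_{\nu_k,\nu'_l}\geq q_{v,v'}$. The main obstacle is checking that the relevant portion of $\widetilde G$ really lies inside $\cal G$ so that the covering-theoretic lifting applies; this is handled by the refinement step, since after enlarging $\pi$ every component of $\pi_\ell^{-1}(0)$ involved in paths between $(v)$ and $(v')$ corresponds to a vertex of $\cal G$ (the extra vertices created in $\widetilde G\setminus \cal G$ are precisely those inserted by the harmless string-blow-ups discussed in the first paragraph).
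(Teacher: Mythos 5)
Your overall strategy matches the paper's: reduce to real slices via Proposition \ref{lem:complex real rates}, compute each $q_i(\delta_k,\delta'_l)$ in the fiber graph $\widehat{\cal F}$ via Lemma \ref{lem:inner X}, use Lemma \ref{rk:monodromy} to compare $\widehat{\cal F}$ with $\cal G$, and finally compare $\cal G$ with $\widetilde{G}$ using strict monotonicity of inner rates along the inserted strings. The treatment of the distinct-tangent case and the resolution-independence preamble are reasonable though not needed in the paper's version.

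The weak point is the lifting step. You assert that $\widehat{\cal E}\colon\widehat{\cal F}\to\cal G$, being the quotient of a $\Z/km\Z$-action, ``is therefore a graph covering which enjoys path lifting.'' This inference is not valid: a quotient by a cyclic action is a covering only when the action is free on vertices and edges, and here it typically is not (e.g.\ the unique $\cal L$-node of $\widehat{\cal F}$ corresponding to the conical root piece is fixed). Consequently unique path lifting is not available, and—more importantly—even granted a lift of the optimal path starting at some $(\nu_k)$, you claim without argument that it terminates at one of the $(\nu'_l)$; a priori it could end at a vertex over $(v')$ containing no real-slice point of $\gamma'$. The paper avoids this precisely by deducing from Lemma \ref{rk:monodromy} the stronger statement that for \emph{every} pair $(\nu,\nu')$ over $(v,v')$ there is an $m$ for which the lift of the path traversed back-and-forth $m$ times is a simple path from $(\nu)$ to $(\nu')$; this yields $q_{\nu,\nu'}\geq q_{v,v'}$ for all such pairs, making the identification of the lift's endpoint irrelevant. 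To salvage your version one would need to argue either that the $(\nu'_l)$ exhaust $\widehat{\cal E}^{-1}(v')$ (which follows from transitivity of the cyclic action together with invariance of $\{(\nu'_l)\}$, but the latter requires the quasi-periodic monodromy representative to preserve $\gamma'$, a point you would have to establish), or to invoke the paper's stronger ``$mp$'' lifting statement.
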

 
  \begin{proof} By Lemma \ref{lem:complex real rates}, we have $q_{inn}(\gamma, \gamma') = \max_{k,l}  q_i(\delta_k,  {\delta'_l})$ where the $\delta_k$ (resp.\ $\delta'_l$) are the components of a real slice of $\gamma$ (resp.\ $\gamma'$). 
  
  %Consider again a generic projection $\ell$ and the  morphism $\rho \colon Y \to \C^2$  defined in the proof of Proposition \ref{lem:complex real rates} and the geometric decomposition of $\C^2$ associated with it. This induces a geometric decomposition of $(X,0)$ by lifting the pieces by $\ell$
First, notice that the statements and proofs of Lemmas \ref{lem:inner X} and  \ref{rk:monodromy} stay the same if one replaces the resolution $\rho_{\ell}$  and the associated geometric decompositions  of $(\C^2,0)$, $(X,0)$, $\cal F$ and $\widehat{\cal F}$ by any  sequence of blow-ups of points $\rho \colon Y \to \C^2$  which resolves the basepoints of the family of projected polar curves $(\ell(\Pi_{\cal D}))_{\cal D \in \Omega}$.  As in the proof of Proposition \ref{lem:complex real rates}, consider  a generic  projection $\ell \colon (X,0) \to (\C^2)$ which is also generic for the curve $\gamma \cup \gamma'$ and let $\rho \colon Y \to \C^2$ be the minimal sequence of blow-ups of points which resolves the basepoints of the family of projected polar curves $(\ell(\Pi_{\cal D}))_{\cal D \in \Omega}$ and which resolves the curve $\ell(\gamma) \cup \ell(\gamma')$. We will use  the same  notations as before for the graphs  of the associated geometric decompositions, in particular the graph-map $\widehat{\cal E} \colon \widehat{\cal F} \to  \cal G$.    If $(v)$ and $(v')$ are two vertices of $\cal G$, we will denote by $q_{v,v'}$ the maximal among minimal rates along simple paths in $\cal G$ between the vertices $(v)$ and $(v')$. 
Since $\rho$ resolves the curve $\ell(\gamma) \cup \ell(\gamma')$, then there are two vertices $(v)$ and $(v')$ in $\cal G$ such that $\gamma$ is contained in the $B$-piece $B_v$ and $\gamma'$ is contained in the $B$-piece $B_{v'}$.
  As a consequence of Lemma \ref{rk:monodromy}, the map $\widehat{\cal E} \colon \widehat{\cal F} \to \cal  G$ satisfies the following property: if $p$ is a simple path in $\cal G$ from the vertices $(v)$ to $(v')$ and if $(\nu)$ and $(\nu')$ are two vertices in $\widehat{\cal F}$ such that $\widehat{\cal E} (\nu) = (v)$ and $\widehat{\cal E} (\nu') = (v')$, then there exists $m \in \N^*$ such that the lifting by $\widehat{\cal E}$ of $mp$ with origin $(\nu)$ is a simple path $\widehat{p}$  from $(\nu)$ to $(\nu')$ in  $\widehat{\cal F}$.
 
  Let  $p$ be a path in $ \cal G$ between the vertices $(v)$ and $(v')$ such that the minimal inner rate along $p$ equals $q_{v,v'}$. Consider two  components $\delta \subset \gamma$ and $\delta' \subset \gamma'$ of the real slices. Let $(\nu)$ and $(\nu')$ be the two vertices of $\widehat{\cal F}$ such that $\delta(t) \in \widehat{F}_{t,\nu}$ and $\delta'(t) \in \widehat{F}_{t,\nu'}$.  Lemma \ref{lem:inner X} extends with the same statement and the same proof to a computation of inner contacts using the new graph $\widetilde{\cal F}$ instead of $\cal F$: $q_i(\delta, \delta')$ equals $q_{\nu, \nu'}$ where $q_{\nu, \nu'}$ equals the maximum of minimum inner rates among all simple paths in $\widetilde{\cal F}$ from $(\nu)$ to $(\nu')$. Using the remark above, we obtain $q_{v,v'} \leq q_{\nu, \nu'}$. 
 
 Now,  take a pair  $\delta, \delta'$ such that $q_i(\delta, \delta') = q_{inn}(\gamma, \gamma')$  (Lemma \ref{lem:complex real rates}) and take a path $\widehat{p}$ in  $\widehat{\cal F}$ which realizes $q_{\nu, \nu'}$. Then $\widehat{p}$ projects by $\widehat{\cal E} $ to a path $p$ whose support is a simple path from $(v)$ to $(v')$. Therefore $q_{\nu, \nu'} \leq q_{v,v'}$, and the previous inequality gives then $q_{\nu, \nu'} = q_{v,v'}$.  Then applying Lemma \ref{lem:complex real rates}, we obtain $q_{inn}(\gamma, \gamma') = q_{v,v'}$, where this number is computed in the graph $\cal G$. 
 
As in Section \ref{subsec:cal G-resolution}, there is again a natural  injection $\cal I \colon V(\cal G) \to V(\widetilde{G})$. We denote again by $(v)$ and $(v')$ the images $\cal I (v)$ and $\cal I (v')$.   By construction, the  sequence of inner rates along a string in $\widetilde{G}$ between two consecutive vertices $\cal I(v)$ and $\cal I(v')$ is strictly monotone. Therefore,  $q_{v,v'}$  computed in the graph $\cal G$ equals the  number $q_{v,v'}$ computed in the graph $\widetilde G$. This proves the proposition.  \end{proof}

\affiliationone{Walter D Neumann\\
Department of Mathematics\\
Barnard College, Columbia University\\
2990 Broadway MC4429\\
New York, NY  10027\\ USA \email{neumann@math.columbia.edu}
} 
\affiliationtwo{Helge M\o ller Pedersen\\
Departamento de Matem\'atica\\ Universidade Federal do Cear\'a\\
Campus do Pici, Bloco 914 \\ CEP 60455-760 \\ Fortaleza, CE \\Brazil \email{helge@mat.ufc.br}}
\affiliationthree{Anne Pichon\\
Aix Marseille Universit\'e, CNRS\\ 
Centrale Marseille, I2M, UMR 7373\\ 
13453 Marseille\\ France \email{anne.pichon@univ-amu.fr}}

\end{document}